\documentclass[11pt,a4paper]{amsart}
\usepackage{rotating}
\usepackage[all]{xy}
\usepackage[utf8]{inputenc}
\usepackage{amscd,amssymb,amsopn,amsmath,amsthm,graphics,amsfonts,enumerate,verbatim,calc}

\numberwithin{equation}{section}

\newtheorem{theorem}{Theorem}[section]
\newtheorem{lemma}[theorem]{Lemma}
\newtheorem{proposition}[theorem]{Proposition}
\newtheorem{corollary}[theorem]{Corollary}
\newtheorem{notation}[theorem]{Notation}

\theoremstyle{definition}

\theoremstyle{remark}
\newtheorem{remark}[theorem]{Remark}
\newtheorem{fact}[theorem]{Fact}
\newtheorem{example}[theorem]{Example}
\newtheorem{question}[theorem]{Question}
\newtheorem{conjecture}[theorem]{Conjecture}

\newtheorem{acknowledgement}{Acknowledgement}

\newcommand{\Syz}{\operatorname{Syz}}
\newcommand{\grade}{\operatorname{grade}}

\newcommand{\Gpd}{\operatorname{Gpd}}

\newcommand{\id}{\operatorname{id}}

\newcommand{\Gid}{\operatorname{Gid}}
\newcommand{\pd}{\operatorname{pd}}

\newcommand{\Ext}{\operatorname{Ext}}

\newcommand{\Tor}{\operatorname{Tor}}

\newcommand{\F}{\operatorname{F}}

\newcommand{\vil}{\operatornamewithlimits{\varinjlim}}

\newcommand{\lo}{\longrightarrow}
\newcommand{\fm}{\mathfrak{m}}

\begin{document}
	
	\author[M. Asgharzadeh ]{Mohsen Asgharzadeh }
	\date{}
	\title[Perfect closure detects injective dimension ]{Perfect closure detects injective dimension  }
	
	\address{M. Asgharzadeh}
	\email{mohsenasgharzadeh@gmail.com}

	\subjclass[2020]{13D07; 13A35; 13B22}
	\keywords{absolute integral closure perfect closure; injective dimension; projective dimension; Gorenstein injective dimension.}
	
	\begin{abstract}
Let $R$ be a noetherian local ring of prime characteristic $p$, and let $R^\infty$ denote its perfect closure. We prove that a finitely generated \(R\)-module $N$ has finite injective dimension if and only if $\operatorname{Ext}_R^i(R^\infty, N) = 0$ for all $i > 0$. As a Gorenstein counterpart, we show that finiteness of the Gorenstein injective (or projective) dimension of $R^\infty$ forces $R$ to be Gorenstein, and we relate this to a non-noetherian Cohen factorization of $R \to R^\infty$. Applications include preservation of Gorensteinness under weakly  etale extensions, structural results on F-coherent and weakly F-nilpotent rings, along with the ascent of Frobenius closure of a parameter ideal  to its powers. Finally,  assuming
$\operatorname{Ext}^{i}_{R}(\frac{R}{\mathfrak m},R^{\infty})=0$
for some $i>\dim(R)$, we show $R$ is regular. This  has some applications.
	\end{abstract}

\maketitle
\tableofcontents

\section{Introduction}

A classical theme in commutative algebra is to detect homological invariants of modules over Noetherian rings through vanishing conditions. In characteristic \(p > 0\), the Frobenius map and its iterates provide a powerful tool for such investigations. Let \(R\) be a Noetherian local ring of characteristic \(p\), and let \(\mathrm{F} : R \to R\) denote the Frobenius map given by \(a \mapsto a^p\). For each integer \(e \ge 0\), we write \({}^e R\) for the \(R\)-algebra whose structure is induced by the \(e\)-th iterate \(\mathrm{F}^e : R \to R\).

In a foundational work, Herzog \cite{her} established the following celebrated criterion:

\begin{theorem}[Herzog \cite{her}]
	\label{thm:herzog}
	Let \(R\) be an \(F\)-finite local ring of characteristic \(p\), and let \(M\) be a finitely generated \(R\)-module. If \(\operatorname{Ext}_R^i({}^e R, M) = 0\) for all \(i > 0\) and infinitely many \(e\), then \(\operatorname{id}_R M < \infty\).
\end{theorem}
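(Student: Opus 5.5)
The plan is to reduce to the case where $M$ and ${}^eR$ are replaced by completions. Then I would use Matlis duality to turn the Ext-vanishing for ${}^eR$ into Tor-vanishing against the Frobenius, and conclude from the known Frobenius criteria for finite flat/projective dimension (Peskine–Szpiro, Kunz, and the Koh–Lee / Avramov–Miller type bounds).

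First, passing to the completion: since $R$ is $F$-finite, ${}^eR$ is a finitely generated $R$-module. Hence $\operatorname{Ext}_R^i({}^eR,M)\otimes_R\widehat R\cong \operatorname{Ext}_{\widehat R}^i({}^e\widehat R,\widehat M)$. Also $\operatorname{id}_R M=\operatorname{id}_{\widehat R}\widehat M$, so we may assume $R$ complete. Then $R$ has a canonical module and a dualizing complex $D$, normalized so that its top cohomology is the canonical module and its Matlis dual setup is standard. Write $E=E_R(R/\mathfrak m)$ and $M^\vee=\operatorname{Hom}_R(M,E)$.

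By Ext–Tor duality for finitely generated ${}^eR$ and arbitrary $M$, we have
\[
\operatorname{Ext}^i_R({}^eR,M)^\vee\cong \operatorname{Tor}_i^R({}^eR,M^\vee),
\]
using that $\operatorname{Ext}^i_R({}^eR,M)$ is finitely generated and $E$ is injective. So the hypothesis becomes $\operatorname{Tor}_i^R({}^eR,M^\vee)=0$ for $i>0$ and infinitely many $e$. We have $\operatorname{id}_RM<\infty$ iff $\operatorname{fd}_R M^\vee<\infty$, and $M^\vee$ is Artinian. So it suffices to show that an Artinian module $A$ with $\operatorname{Tor}_{>0}^R({}^eR,A)=0$ for infinitely many $e$ has finite flat dimension. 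For this I would invoke the Frobenius-detects-finite-flat-dimension theorem for arbitrary modules (Avramov–Iyengar–Miller, or Koh–Lee). Its key input is that for $e\gg0$ the ideal $\mathfrak m^{[p^e]}$ lies deep inside $\mathfrak m^{c}$, with $c$ beyond the Artin–Rees-type constant for the minimal free resolution of $k$. Consequently, Tor-vanishing against ${}^eR$ in $\operatorname{depth}R+1$ consecutive degrees forces $\operatorname{Tor}_{i}^R(k,A)=0$ for $i\gg0$. For Artinian (hence $\mathfrak m$-torsion) $A$, that vanishing gives $\operatorname{fd}A<\infty$.

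The main obstacle is this last step, which transfers Tor-vanishing against ${}^eR$ into vanishing against $k$ for a non-finitely-generated module. The standard argument (Koh–Lee) works with the minimal resolution $F_\bullet$ of $k$ and the factorization of high Frobenius powers through $\mathfrak m^c F$. I would run it with $A$ in place of a finite module, using that $\operatorname{Tor}^R_i(k,A)$ is computed by $F_\bullet\otimes A$ and that Koh–Lee's argument only uses the resolution of $k$. If needed, I would alternatively dualize back and apply Herzog's original argument directly on $\operatorname{Ext}(k,M)$ to get finite Bass numbers vanishing in high degree.
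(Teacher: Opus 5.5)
The paper gives no proof of this statement; it is quoted from Herzog. So I can only judge your argument on its own terms.

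\textbf{What is correct.} Your reductions are fine:
\begin{enumerate}
\item ${}^eR\otimes_R\widehat R\cong{}^e\widehat R$, since $\mathfrak m\cdot{}^eR=\mathfrak m^{[q]}$ with $q=p^e$ is $\mathfrak m$-primary.
\item $\Ext^i_R({}^eR,M)^\vee\cong\Tor_i^R({}^eR,M^\vee)$ is the first isomorphism of Fact~\ref{fact:matlis-duality}.
\item $\operatorname{fd}_RM^\vee=\id_RM$.
\end{enumerate}
But these steps only restate the problem. Since also $\Tor_i^R(k,M^\vee)\cong\Ext^i_R(k,M)^\vee$, the claim ``$\Tor^R_{>0}({}^eR,A)=0$ for infinitely many $e$ forces $\Tor^R_i(k,A)=0$ for $i\gg0$'' with $A=M^\vee$ is exactly the theorem you are trying to prove.

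\textbf{The gap.} All the content sits in the step you flag as the main obstacle, and there the proposal has a genuine gap. Koh--Lee and Avramov--Iyengar--Miller are results about finitely generated objects and do not cover $M^\vee$. Moreover, Koh--Lee's argument does \emph{not} use only the resolution of $k$. The constant $s$ extracted from the resolution of $k$ is applied to the Frobenius twist $F^e(P)$ of a \emph{minimal finite free} resolution $P$ of the module itself. That complex has differentials with entries in $\mathfrak m^{[q]}\subseteq\mathfrak m^s$, and it is exact wherever $\Tor({}^eR,-)$ vanishes. The point of Koh--Lee is that such a deep complex cannot be exact at $\operatorname{depth}R+1$ consecutive spots unless its modules there vanish.

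Your $A=M^\vee$ has no such resolution. Already for $M=R$ with $\operatorname{depth}R>0$, one has $A=E$ with $\mathfrak mE=E$. Hence no free resolution of $E$ satisfies $\partial(P_1)\subseteq\mathfrak mP_0$. Twisting a non-minimal resolution, which has unit entries, produces nothing deep for $s$ to act on. Your fallback, ``apply Herzog's original argument'', is circular.

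\textbf{What is missing.} You need to feed Koh--Lee the right finite free complex. It comes from the minimal injective resolution $I^\bullet$ of $M$, not from $M^\vee$.
\begin{enumerate}
\item Take $R$ complete and $F$-finite. Then $\operatorname{Hom}_R({}^eR,-)$ sends $E_R(R/\mathfrak p)$ to the injective hull of the corresponding residue field of ${}^eR$. It also turns the endomorphism ``multiplication by $r$'' of $E$ into multiplication by $r^q$.
\item By hypothesis, $\operatorname{Hom}_R({}^eR,I^\bullet)$ is an injective resolution of the finite ${}^eR$-module $\operatorname{Hom}_R({}^eR,M)$. Its $\Gamma_{\mathfrak m}$-part $\operatorname{Hom}_R({}^eR,\Gamma_{\mathfrak m}I^\bullet)$ computes local cohomology, so it is exact in degrees $>\dim R$.
\item Its Matlis dual is $F^e(C)$, where $C=(\Gamma_{\mathfrak m}I^\bullet)^\vee$. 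Here $C_i=R^{\mu^i(\mathfrak m,M)}$, and the differentials of $C$ have entries in $\mathfrak m$ by minimality of $I^\bullet$.
\item Take $e$ in your infinite set with $q\ge s$. Koh--Lee applied to $F^e(C)$ gives $\mu^i(\mathfrak m,M)=0$ for $i\gg0$, hence $\id_RM<\infty$.
\end{enumerate}
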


While elegant, Herzog's theorem requires checking infinitely many modules \({}^e R\)—one for each Frobenius iterate—to conclude finite injective dimension. This raises a natural question: can the entire family \(\{{}^e R\}_{e \ge 0}\) be encoded into a single test module that simultaneously detects finite injective dimension?
The perfect closure of \(R\), defined as the direct limit

\[
R^\infty := \varinjlim \bigl( R \xrightarrow{\mathrm{F}} R \xrightarrow{\mathrm{F}} R \xrightarrow{\mathrm{F}} \cdots \bigr)
= \varinjlim_{e \ge 0} ({}^e R),
\]

is a natural candidate. This \(R\)-algebra is the colimit of all Frobenius twists and has remarkable homological properties. Bhatt and Scholze \cite{BS1} showed that \(R^\infty\) has finite global dimension in a suitable sense, while   \cite{moh} established that every \(R^+\)-module has finite flat dimension over \(R^+\). Here, \(R^+\) denotes the absolute integral closure of an integral domain \(R\), defined as the integral closure of \(R\) inside an algebraic closure of its fraction field.

The main purpose of this note is to prove that the perfect closure \(R^\infty\) indeed serves as a universal test module for finite injective and projective dimensions. Our first main result is the following:

\begin{theorem}
	\label{thm:main1}
	Let \(R\) be a complete local ring of prime characteristic, and let \(N\) be a finitely generated \(R\)-module. Then the following are equivalent:
	\begin{enumerate}
		\item \(\operatorname{Ext}_R^i(R^\infty, N) = 0\) for all \(i > 0\),
		\item \(\operatorname{id}_R(N) < \infty\).
	\end{enumerate}
\end{theorem}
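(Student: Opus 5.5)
For (2)$\Rightarrow$(1) I would use the Matlis-dual description of $\Ext$ into a module of finite injective dimension. Since $R$ is complete, it has a dualizing complex. A finitely generated $N$ with $\id_R N<\infty$ then forces $R$ to be Cohen--Macaulay (Bass's conjecture, proved by Roberts), so a canonical module $\omega$ exists. Moreover $N$ has a finite resolution $0\to N\to \omega^{n_0}\to\cdots\to\omega^{n_t}\to 0$ by sums of $\omega$; this is the Sharp / Foxby correspondence between finite injective dimension and finite flat dimension over $\omega$. A dimension-shifting argument along this resolution reduces the claim to showing $\Ext^i_R(R^\infty,\omega)=0$ for $i>0$.

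To prove that vanishing, write $R^\infty=\varinjlim {}^eR$ and use the Milnor sequence
\[0\to \varprojlim{}^1\Ext^{i-1}_R({}^eR,\omega)\to \Ext^i_R(R^\infty,\omega)\to \varprojlim \Ext^i_R({}^eR,\omega)\to 0 .\]
Since $R$ is complete, $R$ is $F$-finite in the sense needed here if the residue field is $F$-finite. In the general case I would first pass to a faithfully flat extension with perfect residue field (a $\Gamma$-construction or gonflement) and check that both conditions descend. Assuming this, ${}^eR$ is a finitely generated maximal Cohen--Macaulay module whenever $R$ is Cohen--Macaulay. Hence $\Ext^i_R({}^eR,\omega)=0$ for $i>0$, and $\Hom_R({}^eR,\omega)\cong {}^e\omega$ by Frobenius duality. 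The $\varprojlim$ term therefore vanishes, and the remaining task is $\varprojlim^1_e {}^e\omega=0$. The transition maps are the Cartier/trace maps ${}^{e+1}\omega\to{}^e\omega$. Each ${}^e\omega$ is a finitely generated module over the complete ring $R$, so the system satisfies Mittag-Leffler: the images form a descending chain of submodules of a finitely generated module, and complete local rings with finitely generated (hence linearly compact) modules kill $\varprojlim^1$. So (1) holds.

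For (1)$\Rightarrow$(2) the natural route is to reduce to Herzog's Theorem~\ref{thm:herzog}, recovering the vanishing of $\Ext^i_R({}^eR,N)$ for infinitely many $e$ from the vanishing for $R^\infty$. I would try Matlis duality instead. With $E=E_R(R/\fm)$ and $N^\vee$ artinian, we have $\Ext^i_R(R^\infty,N)^\vee\cong \Tor_i^R(R^\infty,N^\vee)$ for complete $R$ and finitely generated $N$, because $N=N^{\vee\vee}$ and Hom-evaluation applies. Since $\Tor$ commutes with direct limits, this equals $\varinjlim_e \Tor_i^R({}^eR,N^\vee)$. So (1) says that every class in $\Tor_i({}^eR,N^\vee)$ dies in some later ${}^{e'}R$. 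The goal is then $\operatorname{fd}_R N^\vee<\infty$, equivalently $\id_R N<\infty$. For this I would invoke the Koh--Lee / Miller style theorem that Frobenius ``eventually killing'' Tor on a complex already forces finite flat dimension. Concretely, take a minimal free resolution $F_\bullet$ of a finitely generated module closely tied to $N$, such as a suitable syzygy, test against $R/\fm$, and use the fact that $F^e(F_\bullet)$ has entries of the differentials in $\fm^{[p^e]}$. The vanishing of the colimit then shows that the Frobenius pushforwards of the homology become zero, and the rigidity of Frobenius lets one conclude that $F_\bullet$ is finite.

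The main obstacle is this converse direction. The colimit vanishing is weaker than vanishing at each level, and turning ``eventually zero along Frobenius'' into genuine finiteness requires a careful argument with the artinian module $N^\vee$. The same difficulty appears with non-$F$-finite residue fields, which must be handled by the faithfully flat reduction described above.
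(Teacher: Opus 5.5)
\paragraph{Gap in $(1)\Rightarrow(2)$.} This direction is not proved. The step you flag as the main obstacle is exactly where the argument is missing, and the tools you name do not supply it.

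Since $\Tor_i^R(R^\infty,N^\vee)=\varinjlim_e\Tor_i^R({}^eR,N^\vee)$, hypothesis (1) only says that each class dies at some later Frobenius level. It gives no vanishing of $\Tor_i^R({}^eR,N^\vee)$ or $\Ext^i_R({}^eR,N)$ for any single $e$. So Herzog's theorem, which needs vanishing for infinitely many $e$ and needs $F$-finiteness, does not apply.

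The Koh--Lee and Miller theorems concern a fixed ${}^eR$ tested against a \emph{finitely generated} module through its minimal free resolution. They say nothing about colimits. Your sketch with differentials in $\fm^{[p^e]}$ would bound the projective dimension of some finitely generated module (e.g.\ a syzygy of $N$), which is not the invariant $\id_R N$ you need.

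Incidentally, $\Ext^i_R(R^\infty,N)^\vee\cong\Tor_i^R(R^\infty,N^\vee)$ requires the first argument to be finitely generated. What holds is $\Tor_i^R(R^\infty,N^\vee)^\vee\cong\Ext^i_R(R^\infty,N)$, which still suffices for the vanishing equivalence.

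The missing idea is to stay over $R^\infty$ instead of descending to the ${}^eR$. For an $R^\infty$-module $L$, the change-of-rings spectral sequence $\Ext^p_{R^\infty}(L,\Ext^q_R(R^\infty,N))\Rightarrow\Ext^{p+q}_R(L,N)$ collapses under (1) to $\Ext^p_R(L,N)\cong\Ext^p_{R^\infty}(L,\operatorname{Hom}_R(R^\infty,N))$. This vanishes for $p$ larger than the global dimension of $R^\infty$, which is finite by Bhatt--Scholze. Take $L=R^\infty/\fm R^\infty$, a nonzero direct sum of copies of $k$ as an $R$-module. This gives $\prod\Ext^p_R(k,N)=0$ for $p\gg0$, hence $\id_R N<\infty$.

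\paragraph{Problems in $(2)\Rightarrow(1)$.} This direction also needs repair.
\begin{enumerate}
\item The $\omega$-resolution runs the other way. From $N\cong\omega\otimes_R P$ with $\pd_R P<\infty$ you get $0\to\omega^{b_t}\to\cdots\to\omega^{b_0}\to N\to0$, and this direction is what dimension shifting in $\Ext_R(R^\infty,-)$ requires. A coresolution $0\to N\to\omega^{n_0}\to\cdots$ need not exist: for $N=\omega/x\omega$ with $x$ regular, $\operatorname{Hom}_R(N,\omega)=0$.
\item The reduction to the $F$-finite case is unjustified. $\Ext_R(R^\infty,-)$, with a non-finitely generated first argument, does not commute with flat base change, so descent along a $\Gamma$-construction is not formal.
\item The Mittag-Leffler claim is false as stated, since descending chains of submodules of a finitely generated module need not stabilize. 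However, $\varprojlim^1=0$ does follow from linear compactness.
\end{enumerate}
None of this is needed. If $R$ is Cohen--Macaulay, then $\underline{x}^{[q]}$ is $R$-regular, so $\underline{x}$ is regular on each ${}^eR$ and hence on $R^\infty$. The augmented \v{C}ech complex is a flat resolution of $H^d_\fm(R)\cong\omega^\vee$. Tensoring it with $R^\infty$ gives $\Tor_i^R(R^\infty,\omega^\vee)\cong H^{d-i}_\fm(R^\infty)=0$ for $i>0$. Hence $\Ext^i_R(R^\infty,\omega)\cong\Tor_i^R(R^\infty,\omega^\vee)^\vee=0$ directly, with no $F$-finiteness and no $\varprojlim^1$.
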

It maybe worth passing to the direct limit is not formally automatic as $\Ext(-,N)$ is not of this style. Indeed, the proof uses some spectral sequence
arguments together with the result of Bhatt-Scholze.
This result has the following corollaries:

\begin{corollary}
	\label{cor:tor}
	Let \((R,\mathfrak m,k)\) be a local ring of prime characteristic, and let \(L\) be an \(\mathfrak m\)-torsion \(R\)-module. Suppose that
	\(
	\operatorname{Tor}_i^R(R^\infty, L) = 0
	\)
	for all \(i > 0\). Then \(\operatorname{pd}_R(L) < \infty\), and consequently \(R\) is Cohen--Macaulay.
\end{corollary}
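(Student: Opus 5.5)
Throughout, write $E=E_R(k)$ for the injective hull of the residue field and $(-)^\vee=\operatorname{Hom}_R(-,E)$ for Matlis duality. The plan is to derive Corollary~\ref{cor:tor} from Theorem~\ref{thm:main1} by dualizing.

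\textbf{Step 1 (reduction to the complete case).} Since $L$ is $\mathfrak m$-torsion, it is naturally an $\widehat R$-module and $L\cong L\otimes_R\widehat R$. Moreover
\[
R^\infty\otimes_R\widehat R\cong(\widehat R)^\infty .
\]
This holds because Frobenius commutes with completion: $\widehat R$ is flat over $R$ and ${}^eR\otimes_R\widehat R\cong{}^e\widehat R$, at least after completing, and the $\mathfrak m$-torsion of $L$ removes the need to complete. Flat base change then gives
\[
\operatorname{Tor}_i^{\widehat R}\bigl(\widehat R^\infty,L\bigr)\cong\operatorname{Tor}_i^R(R^\infty,L)=0 \quad\text{for } i>0.
\]
Since $\operatorname{pd}_R L=\operatorname{pd}_{\widehat R}L$ for $\mathfrak m$-torsion modules, and Cohen--Macaulayness ascends and descends along completion, we may assume $R$ is complete.

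\textbf{Step 2 (Matlis duality).} Use the standard isomorphism
\[
\operatorname{Ext}^i_R(R^\infty,L^\vee)\cong\operatorname{Tor}_i^R(R^\infty,L)^\vee .
\]
The hypothesis then gives $\operatorname{Ext}^i_R(R^\infty,L^\vee)=0$ for all $i>0$. The difficulty is that Theorem~\ref{thm:main1} needs a finitely generated module, while $L^\vee$ is only Matlis reflexive when $L$ is artinian. I would first treat $L$ artinian, for instance $L$ of finite length. In that case $N:=L^\vee$ is finitely generated, Theorem~\ref{thm:main1} gives $\operatorname{id}_R N<\infty$, and Matlis duality gives $\operatorname{pd}_R L=\operatorname{fd}_R L=\operatorname{id}_R L^\vee<\infty$. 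Here one uses that over a noetherian ring a module of finite flat dimension has finite projective dimension (Gruson--Raynaud/Jensen).

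\textbf{Step 3 (general $\mathfrak m$-torsion $L$).} This is the main obstacle. I would try to reprove the key implication of Theorem~\ref{thm:main1} directly for $\operatorname{Tor}$, since $\operatorname{Tor}$ commutes with direct limits. Write
\[
\operatorname{Tor}_i(R^\infty,L)=\varinjlim_e\operatorname{Tor}_i({}^eR,L).
\]
I would then combine this with the Bhatt--Scholze finiteness used in the proof of Theorem~\ref{thm:main1}: via the spectral sequence argument, this finiteness should show that $\operatorname{Tor}_i^R(R^\infty,k)$ controls $\operatorname{fd}_R L$. Concretely, for an $\mathfrak m$-torsion module $L$, finite flat dimension is detected by vanishing of $\operatorname{Tor}_{\gg0}(k,L)$. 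Since $L$ is a direct limit of finite-length submodules, I would aim to show that the vanishing of $\operatorname{Tor}_{>0}(R^\infty,L)$ passes to enough finite-length pieces. Failing that, I would argue with $L^\vee$ directly, provided the proof of Theorem~\ref{thm:main1} in fact only uses that $N$ is Matlis reflexive or of bounded depth properties.

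\textbf{Step 4 (Cohen--Macaulayness).} Once $\operatorname{pd}_R L<\infty$ for some nonzero $\mathfrak m$-torsion module $L$ (if $L=0$ the statement is vacuous, so I assume $L\neq0$), the conclusion follows from Roberts' new intersection theorem in the form of the Peskine--Szpiro/Roberts result. That result says that if a nonzero module of finite length, or more generally a nonzero $\mathfrak m$-torsion module, has finite projective dimension, then $R$ is Cohen--Macaulay. For torsion modules that are not finitely generated, I would reduce to a finite-length submodule, or use the bound $\operatorname{pd}\ge\dim R$ coming from the nonvanishing of local cohomology $H^{\dim R}_{\mathfrak m}$ via the Auslander--Buchsbaum-type formula for flat dimension.
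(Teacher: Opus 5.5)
\paragraph{Gap: Step 3 is not a proof.} Your Steps 1--2 prove the corollary only when $L$ is artinian, where $L^\vee$ is finitely generated over $\widehat R$. Step 3 is where the statement for an arbitrary non-artinian $\mathfrak m$-torsion module lives, and it lists possible strategies without carrying any of them out.

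\begin{itemize}
\item Your first idea, passing to finite-length pieces, cannot work. Vanishing of $\operatorname{Tor}^R_{>0}(R^\infty,-)$ is not inherited by submodules, even finite-length ones. For $R=\mathbb F_p[x]/(x^2)$ one has $R^\infty=k$. So $L=R$ satisfies the hypothesis, while its socle $k$ has $\operatorname{Tor}^R_1(R^\infty,k)=\operatorname{Tor}^R_1(k,k)\neq0$.
\item The same example kills the ``reduce to a finite-length submodule'' idea in Step 4, since finite projective dimension does not pass to submodules. Only your second suggestion there, an intersection theorem for bounded flat complexes, is viable. The paper simply cites \cite[Observation 3.15]{moh5}.
\item A minor point: the isomorphism $R^\infty\otimes_R\widehat R\cong\widehat R^\infty$ in Step 1 is false without F-finiteness. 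The Tor comparison against $\mathfrak m$-torsion modules that you actually use is nevertheless correct.
\end{itemize}

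\paragraph{What is missing.} You need a way to apply the detection result to $N=L^\vee$ without finite generation.

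The paper's route: since $L$ is $\mathfrak m$-torsion, $L^\vee=\varprojlim L_\alpha^\vee$, with $L_\alpha$ running over the finite-length submodules, is a complete module by \cite[\S 4.2, Lemma]{sim}. The Proposition of Section 2 is formulated for $N$ finitely generated or complete. Applied to $N=L^\vee$, it gives $\operatorname{id}_R L^\vee<\infty$. Then $\operatorname{fd}_R L<\infty$ via $\operatorname{Tor}^R_i(R/I,L)^\vee\cong\operatorname{Ext}^i_R(R/I,L^\vee)$, and $\operatorname{pd}_R L<\infty$ by Jensen. No passage to $\widehat R$ and no artinian special case are needed.

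You can also close Step 3 with tools you already named:
\begin{itemize}
\item The change-of-rings spectral sequence $\operatorname{Ext}^p_{R^\infty}(R^\infty/\mathfrak mR^\infty,\operatorname{Ext}^q_R(R^\infty,N))\Rightarrow\operatorname{Ext}^{p+q}_R(R^\infty/\mathfrak mR^\infty,N)$, together with the Bhatt--Scholze finiteness, never uses finite generation of $N$.
\item For $N=L^\vee$ it therefore gives $\prod\operatorname{Tor}^R_p(k,L)^\vee\cong\operatorname{Ext}^p_R(R^\infty/\mathfrak mR^\infty,L^\vee)=0$ for $p\gg0$.
\item Since $L$ is $\mathfrak m$-torsion, $\operatorname{Tor}^R_{\gg0}(k,L)=0$ already forces $\operatorname{fd}_R L<\infty$. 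To see this, compute $M\otimes^{\mathbf L}_R L$ for finitely generated $M$ through the \v{C}ech complex, which writes it as a direct limit of complexes with finite-length homology in a fixed bounded range. This reduces everything to $\operatorname{Tor}(k,L)$.
\end{itemize}
This replaces the Roberts step, the only place the proposition used finite generation. It is also what justifies the complete-module case of the Proposition for $N=L^\vee$, whose proof the paper writes out only for finitely generated $N$.
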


\begin{corollary}
	\label{cor:gorenstein1}
	Let \(R\) be a local ring of prime characteristic. Then \(R\) is Gorenstein if and only if \(\operatorname{Ext}_R^i(R^\infty, R) = 0\) for all \(i > 0\).
\end{corollary}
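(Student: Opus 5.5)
We reduce Corollary \ref{cor:gorenstein1} to Theorem \ref{thm:main1} with $N=R$. Since $R$ is Gorenstein exactly when $\id_R(R)<\infty$, the only real issue is that Theorem \ref{thm:main1} assumes $R$ complete, while here $R$ is merely local. So the plan is to pass to the $\fm$-adic completion $\widehat R$ and check that both conditions transfer.

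\emph{Gorenstein implies vanishing.} Assume $R$ is Gorenstein. We do not need completeness for this direction. Set $d=\dim R$; then $\id_R R=d<\infty$. The module $R^\infty$ is a direct limit of the modules ${}^eR$, so we use the Lazard-type description of direct limits. Because $\id_R R<\infty$, the functor $\Ext^i_R(-,R)$ vanishes for $i>d$ on every module. Hence only finitely many degrees of $\Ext_R^\bullet(R^\infty,R)$ can be nonzero.

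For vanishing in degrees $1\le i\le d$, we use the proof of Theorem \ref{thm:main1} rather than just its statement. We expect its argument (via the Bhatt--Scholze result and a Milnor $\varprojlim^1$ spectral sequence) to give the implication $(2)\Rightarrow(1)$ for any local ring once $\id N<\infty$. Concretely, there is a short exact sequence
\[
0\to\varprojlim\nolimits^1\Ext^{i-1}_R({}^eR,R)\to\Ext^i_R(R^\infty,R)\to\varprojlim\Ext^i_R({}^eR,R)\to 0 .
\]
By Herzog's converse direction for Gorenstein rings, $\Ext^i_R({}^eR,R)=0$ for $i>0$ when $R$ is Gorenstein. This holds because ${}^eR$ is maximal Cohen--Macaulay-like: it has depth $d$ over $R$, and $\Ext^i_R(-,R)$ vanishes on modules of depth $d$ over a Gorenstein ring. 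The argument needs the depth statement for non-finitely generated ${}^eR$; as a fallback, we complete first and use the F-finite case. Under this vanishing, the $\varprojlim$ terms are zero for $i\ge 2$. It remains to show $\varprojlim^1\Hom_R({}^eR,R)=0$, and we would take this from the main theorem's proof.

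\emph{Vanishing implies Gorenstein.} Assume $\Ext^i_R(R^\infty,R)=0$ for all $i>0$. We base change to $\widehat R$. First, $\widehat R\otimes_R R^\infty\cong(\widehat R)^\infty$. This holds because Frobenius commutes with completion up to the flat map $R\to\widehat R$, and $(\widehat R)^\infty$ is the colimit of ${}^e\widehat R$. Next, $R\to\widehat R$ is flat and $R$ is finitely generated, so $\Ext^i_R(R^\infty,R)\otimes_R\widehat R$ should compare with $\Ext^i_{\widehat R}((\widehat R)^\infty,\widehat R)$. This comparison is the main obstacle, because $R^\infty$ is not finitely generated and Ext does not commute with flat base change in general.

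To handle it, we would write $R^\infty=\varinjlim{}^eR$ and reduce, via the Milnor sequence, to the modules $\Ext^i_R({}^eR,R)$. Each ${}^eR$ is again a (possibly non-finite) module, so we would use adjunction instead:
\[
\mathbf{R}\!\Hom_{\widehat R}\bigl(\widehat R\otimes_R R^\infty,\widehat R\bigr)\simeq\mathbf{R}\!\Hom_R(R^\infty,\widehat R).
\]
We would then try to show that vanishing of $\Ext^{>0}_R(R^\infty,R)$ forces vanishing of $\Ext^{>0}_R(R^\infty,\widehat R)$. A cleaner alternative avoids completion entirely: check that the proof of Theorem \ref{thm:main1} only uses completeness to invoke Bhatt--Scholze-type finiteness on $R^\infty$, which is insensitive to completion since $(\widehat R)^\infty$ is faithfully flat over $R^\infty$.

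Granting the transfer, Theorem \ref{thm:main1} applied to $\widehat R$ with $N=\widehat R$ gives $\id_{\widehat R}\widehat R<\infty$. Hence $\widehat R$ is Gorenstein, and therefore so is $R$.
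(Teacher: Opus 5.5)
The paper gets this by putting $N=R$ in the Proposition of Section~2, which is stated for an arbitrary local ring, and using that $R$ is Gorenstein iff $\id_R R<\infty$. No completion is used. For ``vanishing $\Rightarrow$ Gorenstein'' the argument that works is the one you only gesture at as a ``cleaner alternative'' and never carry out. Under the hypothesis, the change-of-rings spectral sequence $\Ext^j_{R^\infty}(L,\Ext^q_R(R^\infty,R))\Rightarrow\Ext^{j+q}_R(L,R)$ collapses to $\Ext^j_R(L,R)\cong\Ext^j_{R^\infty}(L,\operatorname{Hom}_R(R^\infty,R))$. This vanishes for $j>\operatorname{gldim}R^\infty$, which is the Bhatt--Scholze input. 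Taking $L=R^\infty/\fm R^\infty$, which over $R$ is a direct sum of copies of $k$, gives $\Ext^j_R(k,R)=0$ for $j\gg0$, so $\id_R R<\infty$. Nothing here uses completeness. As written, your proof of this direction rests on ``granting the transfer'' to $\widehat R$, which you do not establish. Part of that transfer is actually false in general: in the example below, $\widehat R\otimes_R R^\infty$ contains the nonzero nilpotent $a^{1/p}\otimes1-1\otimes c$, so it is not $(\widehat R)^\infty$.

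For ``Gorenstein $\Rightarrow$ vanishing'' your claim that completeness is unnecessary is wrong, and both steps of your Milnor-sequence argument fail with it. Here is a counterexample. Let $[k:k^p]=\infty$ and $R=k^p[[t]][k]\subset k[[t]]$; this is a DVR with $\widehat R=k[[t]]$. Let $c=\sum_{i\ge1}x_it^i$ with the $x_i$ $p$-independent over $k^p$, so that $c\notin K:=\operatorname{Frac}R$ but $a:=c^p\in R$.
\begin{itemize}
\item If $c_n\in R$ and $c-c_n\in t^n\widehat R$, then $a-c_n^p\in t^{np}\widehat R\cap R=t^{np}R$, so $a^{1/p}-c_n\in t^nR^{1/p}$.
\item Hence every $R$-linear $\psi\colon R^\infty\to\widehat R$ satisfies $\psi(a^{1/p})=c\,\psi(1)$, because the difference lies in $\bigcap_n t^n\widehat R=0$.
\item Since $\widehat R/R$ is a $K$-vector space and $1,a^{1/p}$ are $K$-independent, some $R$-linear $\varphi\colon R^\infty\to\widehat R/R$ has $\varphi(1)=0$ and $\varphi(a^{1/p})=\overline{c/t}$.
\item A lift $\psi$ of $\varphi$ would have $\psi(1)\in R$ and $c/t-c\,\psi(1)\in R$, which forces $c\in R$, a contradiction.
\end{itemize}
By $0\to R\to\widehat R\to\widehat R/R\to0$, this gives $\Ext^1_R(R^\infty,R)\neq0$ although $R$ is regular.

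The same computation with $R^{1/p}$ gives $\Ext^1_R({}^1R,R)\neq0$. So ``depth $d$ forces $\Ext^{>0}_R({}^eR,R)=0$'' fails for non-finitely generated ${}^eR$. Completing does not rescue it: $\widehat R$ need not be F-finite, and $\Ext$ does not descend along $R\to\widehat R$.

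What is true is the paper's \v{C}ech-complex argument. Since $R$ is Cohen--Macaulay, $R^\infty$ is balanced big Cohen--Macaulay, so $\Tor_i^R(R^\infty,H^d_\fm(R))=H^{d-i}_\fm(R^\infty)=0$ for $i>0$. Combined with $H^d_\fm(R)\cong E_R(k)$ and Matlis duality, this gives $\Ext^i_R(R^\infty,\widehat R)=0$ for $i>0$. That is the required vanishing exactly when $R$ is complete. The paper's proof tacitly needs this too, since $\omega_R^{\vee\vee}\cong\widehat{\omega_R}$. In the complete case no $\varprojlim^1$ bookkeeping is needed. Your deferral of $\varprojlim^1\operatorname{Hom}_R({}^eR,R)=0$ to ``the main theorem's proof'' was never an argument.
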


\begin{corollary}
	\label{cor:gorenstein2}
	Let \(R\) be a complete local ring of prime characteristic. Then \(R\) is Gorenstein if \(R^\infty\) is of finite Gorenstein projective dimension.
\end{corollary}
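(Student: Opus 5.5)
We deduce this from Theorem~\ref{thm:main1} applied with $N=R$, via the standard fact that $\Ext^i_R(M,R)=0$ for $i>0$ whenever $\Gpd_R(M)<\infty$ and $M$ is Gorenstein projective. We proceed in three steps.

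\textbf{Step 1: reduce to $R^\infty$ being Gorenstein projective.} Set $n:=\Gpd_R(R^\infty)<\infty$. Take a projective resolution of $R^\infty$ and let $G$ be its $n$-th syzygy. Then $G$ is Gorenstein projective. By the defining property of Gorenstein projectives (totally acyclic complexes of projectives that stay exact under $\operatorname{Hom}(-,P)$ for projective $P$), we have $\Ext^i_R(G,R)=0$ for all $i>0$. Dimension shifting along the resolution gives
\[
\Ext^i_R(R^\infty,R)\cong\Ext^{i-n}_R(G,R)=0 \quad \text{for } i>n .
\]
So we only get vanishing in large degrees, and we must remove the shift.

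\textbf{Step 2: remove the shift using Frobenius.} The idea is to exploit that $R^\infty$ is "self-similar" under Frobenius. As an $R$-module, $R^\infty$ regarded through $\F$ is again $R^\infty$, because $\F\colon R^\infty\to R^\infty$ is an isomorphism of rings. A cleaner route, however, avoids the shift altogether. By a result of Holm, a module of finite Gorenstein projective dimension has $\Gpd$ equal to $\sup\{i:\Ext^i_R(M,P)\neq0,\ P \text{ projective}\}$. Hence it suffices to prove that $\Ext^i_R(R^\infty,R)=0$ for \emph{some} range of $i$ that then propagates. Concretely, I would use the Bhatt--Scholze input from the proof of Theorem~\ref{thm:main1}. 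There one compares $\Ext^i_R(R^\infty,N)$ with $\varprojlim$ and $\varprojlim^1$ of the groups $\Ext^i_R({}^eR,N)$ via the Milnor-type spectral sequence
\[
E_2^{p,q}=\varprojlim\nolimits^{p}_e \Ext^q_R({}^eR,N)\Rightarrow \Ext^{p+q}_R(R^\infty,N).
\]
This sequence degenerates to two columns because the limit is countable.

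\textbf{Step 3: conclude.} Applying this with $N=R$, vanishing of $\Ext^i_R(R^\infty,R)$ for all $i>n$ should force $\Ext^q_R({}^eR,R)=0$ for infinitely many $e$ and all $q>n+1$, once the Mittag-Leffler conditions are controlled. In the complete case $R$ is a quotient of a regular ring and has a dualizing complex, so these $\Ext$ groups are controlled by Matlis/local duality. Herzog's Theorem~\ref{thm:herzog}, in the form allowing vanishing only for $i\gg0$ (its proof goes through Frobenius powers of syzygies, so a uniform shift is harmless), would then give $\id_R R<\infty$, so $R$ is Gorenstein. Alternatively, once vanishing for $i\gg0$ is in hand, one reruns the proof of Theorem~\ref{thm:main1}, which only used vanishing in large degrees after dimension shifting.

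\textbf{Main obstacle.} The hardest point is Step 3: passing from vanishing of $\Ext^{\gg0}_R(R^\infty,R)$ to finite injective dimension, because Theorem~\ref{thm:main1} as stated assumes vanishing for all $i>0$. I would try first to check that the proof of Theorem~\ref{thm:main1} only uses vanishing beyond $\dim R$ plus a fixed shift. If that fails, I would replace $R$ by the high syzygy $\Syz^n$ of the canonical module and argue by dimension shifting on the second variable.
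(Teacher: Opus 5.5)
Your Step~1 is fine: $\Gpd_R(R^\infty)=n<\infty$ gives $\Ext^i_R(R^\infty,R)=0$ for $i>n$. The argument you develop after that, however, does not work.

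In Step~3, the Milnor sequence only tells you that $\varprojlim_e$ and $\varprojlim^1_e$ of the inverse system $\{\Ext^q_R({}^eR,R)\}_e$ vanish for large $q$. An inverse system can have vanishing $\varprojlim$ and $\varprojlim^1$ while every term is nonzero (take zero transition maps). So nothing can be extracted about the individual groups $\Ext^q_R({}^eR,R)$, for infinitely many $e$ or otherwise, and ``controlling Mittag-Leffler conditions'' does not change this. In addition, Theorem~\ref{thm:herzog} requires $R$ to be $F$-finite, which a complete local ring need not be. The ``$i\gg0$'' version of it you invoke is also not established. The Frobenius self-similarity of $R^\infty$ in Step~2 does not by itself move any $\Ext$ degrees.

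Relative to the paper, the missing idea is a way to remove the shift. The paper passes to $\operatorname{Gfd}_R(R^\infty)\le\Gpd_R(R^\infty)<\infty$ and uses the depth formula $\operatorname{Gfd}_R M=\sup_{\mathfrak p}\{\operatorname{depth}R_{\mathfrak p}-\operatorname{depth}_{R_{\mathfrak p}}M_{\mathfrak p}\}$, with $\Ext$-depth. It combines this with $\operatorname{depth}_{R_{\mathfrak p}}(R^\infty)_{\mathfrak p}\ge\operatorname{depth}R_{\mathfrak p}$, since a regular sequence on $R_{\mathfrak p}$ stays weakly regular on $(R_{\mathfrak p})^\infty$. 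This forces $\operatorname{Gfd}_R(R^\infty)=0$, so $\Tor^R_i(E_R(k),R^\infty)=0$ for $i>0$. Matlis duality with $E_R(k)^\vee\cong\widehat R=R$ (this is where completeness is used) then gives $\Ext^i_R(R^\infty,R)=0$ for all $i>0$, and Corollary~\ref{cor:gorenstein1} applies.

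That said, the fallback you mention only in passing does close the gap, and it should be your proof. Take the change-of-rings spectral sequence $E_2^{p,q}=\Ext^p_{R^\infty}(L,\Ext^q_R(R^\infty,R))\Rightarrow\Ext^{p+q}_R(L,R)$ from the proof of Theorem~\ref{thm:main1}. Its $E_2$-page vanishes outside the box $0\le p\le g$, $0\le q\le n$, where $g=\operatorname{gldim}(R^\infty)$. Hence $\Ext^m_R(L,R)=0$ for $m>g+n$.

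This is a boundedness argument, not a literal rerun of the original proof. That proof used that the $E_2$-page is concentrated in $q=0$, so it genuinely needed vanishing in all positive degrees. Your description of it as ``only using vanishing in large degrees'' is inaccurate, but the conclusion survives.

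Taking $L=R^\infty/\fm R^\infty\cong\bigoplus k$ gives $\Ext^m_R(k,R)=0$ for $m\gg0$, so $\id_R R<\infty$ and $R$ is Gorenstein. Compared with the paper, this route avoids the Gorenstein flat depth formula and Matlis duality, and it does not need completeness. It rests directly on the finiteness of $\operatorname{gldim}(R^\infty)$ imported from Bhatt--Scholze, which the paper's route also uses, via Corollary~\ref{cor:gorenstein1}.
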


In order to connect the Gorenstein analogue of Theorem \ref{thm:main1}, our next results center on  interconnected themes:
\begin{enumerate}
	\item detecting Gorensteinness via the Gorenstein injective dimension of \(R^\infty\);
	\item structural consequences of \(F\)-coherent and weak \(F\)-nilpotent;

\end{enumerate}

\medskip

\noindent
(1): Detection of Gorensteinness via two approaches. 
We prove the following detection theorem:

\begin{theorem}
	\label{thm:gid}
	Let \((R,\mathfrak m)\) be a complete local ring of prime characteristic. If
	\(
	\operatorname{Gid}_R(R^\infty) < \infty,
	\)
	then \(R\) is Gorenstein.
\end{theorem}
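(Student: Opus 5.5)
The plan is to use the finiteness of $\operatorname{Gid}_R(R^\infty)$ to get vanishing of $\operatorname{Ext}$ against modules of finite projective dimension. We then convert this, via Matlis duality over the complete ring $R$, into a statement about flat (equivalently, since $R$ is complete and noetherian, injective) modules that detects Gorensteinness. The basic input is the standard fact for Gorenstein injective modules. If $\operatorname{Gid}_R(M)=g<\infty$, then $\operatorname{Ext}^i_R(L,M)=0$ for all $i>g$ and every $R$-module $L$ of finite projective dimension. Hence $\operatorname{Ext}^i_R(L,M)=0$ for $i>g$ and every $L$ of finite injective dimension as well, because over a noetherian ring of finite Krull dimension finite injective dimension forces finite projective dimension. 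We take $M=R^\infty$.

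First, by the result of Bhatt--Scholze invoked in the proof of Theorem \ref{thm:main1}, we reduce to a statement involving $R$ itself. The route I would try first is to dualize. Put $E=E_R(k)$ and apply $\operatorname{Hom}_R(-,E)$. For a finitely generated $L$ we have
\[
\operatorname{Ext}^i_R(L,R^\infty)^\vee\cong \operatorname{Tor}_i^R\bigl(L,(R^\infty)^\vee\bigr),
\]
so $\operatorname{Gid}_R(R^\infty)<\infty$ makes $(R^\infty)^\vee$ of finite Gorenstein flat dimension. The cleaner path is to exploit the Frobenius structure directly. Since $R^\infty=\varinjlim {}^eR$ and $\operatorname{Gid}$ is controlled on direct limits over noetherian rings (Gorenstein injectives are closed under direct limits when $R$ is noetherian), one would like to pass to each ${}^eR$. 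I would rather avoid this and instead test against $L=R/\underline{x}$ for a system of parameters $\underline x$: this module has finite projective dimension, so $H^i_{\underline x}$-type Koszul cohomology of $R^\infty$ vanishes in high degrees relative to $g$. That gives depth and Cohen--Macaulay information about $R^\infty$, hence about $R$, using that $R\to R^\infty$ is purely inseparable and faithful on parameter ideals up to Frobenius closure.

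Second, to obtain Gorensteinness and not merely Cohen--Macaulayness, I would use the characterization of Christensen--Frankild--Holm / Foxby: a local ring admitting a nonzero finitely generated (or suitably "big" and faithfully behaved) module of finite Gorenstein injective dimension and finite flat dimension is Gorenstein. The key point is that $R^\infty$ has finite flat dimension over $R$ in the Bhatt--Scholze sense. More precisely, I would use that $\operatorname{Tor}^R_i(R^\infty,-)$ vanishes for $i>\dim R$ in the perfect setting after replacing $R$ by a regular ring via Cohen's structure theorem: $R$ complete gives a module-finite $A\to R$ with $A$ regular, and $A^\infty$ is faithfully flat over $A$ by Kunz. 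Then I would apply the theorem that a module with both finite flat dimension and finite Gorenstein injective dimension, whose support contains $\mathfrak m$ (true since $R^\infty\otimes k\neq0$), forces $R$ to be Gorenstein. This mirrors Corollary \ref{cor:gorenstein2}, which is the projective version and presumably uses the same mechanism.

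The main obstacle is establishing that $R^\infty$ has finite flat dimension over $R$ (or an adequate substitute), since this fails for non-regular $R$ in general. Bhatt--Scholze only give finiteness of $\operatorname{Tor}$ in a derived perfect sense. If that fails, I would fall back on reducing via Theorem \ref{thm:main1}. I would show that finite $\operatorname{Gid}$ plus the vanishing of $\operatorname{Ext}^{>0}_R(R^\infty,-)$ against the dualizing module $\omega_R$ (which exists since $R$ is complete) forces $\operatorname{id}\,R<\infty$, comparing $R^\infty$ with its Matlis dual via the Foxby equivalence between Auslander and Bass classes.
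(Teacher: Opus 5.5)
Your proposal has a genuine gap: none of the routes you sketch reaches Gorensteinness, and the concrete mechanisms you rely on are false.

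\begin{itemize}
\item Testing against $L=R/\underline{x}$ fails. This nonzero finite-length module has finite projective dimension only if $R$ is already Cohen--Macaulay. If you use the Koszul complex instead, the vanishing of $H^i(\underline{x};R^\infty)$ for $i>\dim R$ holds for every module and carries no information.
\item Your main step needs $R^\infty$ to have finite flat dimension over $R$. This happens only when $R$ is regular (compare Corollary~\ref{4.3}, or Aberbach--Li and Bhatt--Iyengar--Ma). So a criterion of the form ``finite flat dimension plus finite $\Gid$ implies Gorenstein'' can never be applied to $R^\infty$ over a non-regular $R$. Flatness of $A^\infty$ over a regular subring $A\subseteq R$ says nothing about $\Gid_R$.
\item The fallback through $\omega_R$ presupposes $\Ext^{>0}_R(R^\infty,\omega_R)=0$. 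By Theorem~\ref{thm:main1} and Bass' conjecture, this amounts to $R$ being Cohen--Macaulay, which you never establish. No argument is given for the final implication either.
\item Finite injective dimension does not force finite projective dimension over a non-Gorenstein ring; $E_R(k)$ is a counterexample. The vanishing of $\Ext^{>g}_R(L,R^\infty)$ for $\id_R L<\infty$ is nevertheless true, by the standard theory of Gorenstein injectives.
\end{itemize}

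The missing idea is that the finiteness to exploit is the global dimension of $R^\infty$ \emph{as a ring} (Bhatt--Scholze), not any homological dimension of $R^\infty$ over $R$. The paper argues as follows.
\begin{itemize}
\item Since $R$ is complete, it has a dualizing complex, so Gorenstein injective $R$-modules are closed under direct limits. Hence every free $R^\infty$-module satisfies $\Gid_R\bigl(\bigoplus_X R^\infty\bigr)\le\Gid_R(R^\infty)$.
\item A finite resolution of $R^\infty/\mathfrak m R^\infty$ by free $R^\infty$-modules is in particular $R$-linear. The two-out-of-three property for finite Gorenstein injective dimension then gives $\Gid_R(R^\infty/\mathfrak m R^\infty)<\infty$.
\item As an $R$-module, $R^\infty/\mathfrak m R^\infty$ is a nonzero $k$-vector space, so $k$ is a direct summand. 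Hence $\Gid_R(k)<\infty$, which forces $R$ to be Gorenstein.
\end{itemize}

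You did mention closure under direct limits, but you used it in the wrong direction. It transports finiteness from the ${}^eR$ up to $R^\infty$, not back down. The way from $R^\infty$ down to $k$ is through a finite free resolution over $R^\infty$.
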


As an application, we show the following:

\begin{corollary}
	\label{cor:weaklyetale}
	Suppose \(R \to S\) is weakly \'etale, \(S\) is complete, and \(R\) is Gorenstein. Then \(S\) is Gorenstein.
\end{corollary}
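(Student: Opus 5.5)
We want to show: if $R\to S$ is weakly \'etale, $S$ is a complete local ring, and $R$ is Gorenstein, then $S$ is Gorenstein. Throughout we use that $S$ has the same prime characteristic $p$ as $R$, since it is an $R$-algebra and is local. The plan is to reduce to Theorem \ref{thm:gid} applied to $S$. It therefore suffices to show $\operatorname{Gid}_S(S^\infty)<\infty$, or, more economically, to verify directly the criterion of Corollary \ref{cor:gorenstein1} for $S$: namely $\operatorname{Ext}^i_S(S^\infty,S)=0$ for all $i>0$.

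\textbf{Step 1: base change of the perfect closure.} The key input on weakly \'etale maps (Bhatt--Scholze, Gabber) is that the relative Frobenius of a weakly \'etale map is an isomorphism. Consequently
\[
S^\infty\cong S\otimes_R R^\infty,
\]
and the natural map $R^\infty\to S^\infty$ is again weakly \'etale. In particular $S$ is flat over $R$, and $S^\infty=S\otimes_R R^\infty$ is a flat base change of $R^\infty$.

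\textbf{Step 2: transfer of the Ext vanishing.} Since $R$ is Gorenstein, Corollary \ref{cor:gorenstein1} gives $\operatorname{Ext}^i_R(R^\infty,R)=0$ for all $i>0$. By adjunction,
\[
\operatorname{Ext}^i_S(S\otimes_R R^\infty,S)\cong \operatorname{Ext}^i_R(R^\infty,S).
\]
This uses that $\mathbf{R}\operatorname{Hom}_S(S\otimes^{\mathbf L}_R R^\infty,S)\cong \mathbf{R}\operatorname{Hom}_R(R^\infty,S)$, and $S\otimes^{\mathbf L}_R R^\infty=S\otimes_R R^\infty$ by flatness of $S$. So we must show $\operatorname{Ext}^i_R(R^\infty,S)=0$ for $i>0$, where $S$ is a flat $R$-module, not necessarily finitely generated.

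\textbf{Step 3: the main obstacle.} Ext out of the non-finitely generated module $R^\infty$ does not commute with flat base change in the second variable. My first attempt would use Gorensteinness to replace $\operatorname{Ext}$-vanishing by Gorenstein-homological finiteness. Over a Gorenstein ring every module has finite Gorenstein injective dimension, bounded by $\dim R$. In particular $\operatorname{Gid}_R(S)<\infty$, and $\operatorname{id}_R(F)\le\dim R$ for every flat $R$-module $F$, since flat modules over a Gorenstein ring have finite injective dimension. Then:
\begin{enumerate}
\item Write $R^\infty$ as the direct limit of ${}^eR$. Each ${}^eR$ is a finitely generated maximal Cohen--Macaulay module when $R$ is F-finite, reducing to the complete case, and hence is totally reflexive.
\item Use $\operatorname{Ext}^i_R(\text{MCM},F)=0$ for $i>0$ and flat $F$ over a Gorenstein ring.
\item Apply the Milnor $\varprojlim^1$ sequence. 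This leaves only a $\varprojlim^1$ term, $\varprojlim^1\operatorname{Hom}_R({}^eR,S)$.
\end{enumerate}
Handling this $\varprojlim^1$ term is exactly the spectral-sequence step the paper used for $N=R$ in Theorem \ref{thm:main1}. I expect the same argument to work for the flat module $S$, by writing $S$ as a direct limit of finite free modules (Lazard) and using finiteness of $\operatorname{id}_R$.

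If this route stalls, the alternative is to show directly that $\operatorname{Gid}_S(S^\infty)<\infty$ and then invoke Theorem \ref{thm:gid}. For this, take a Gorenstein injective resolution of $R^\infty$, which has length at most $\dim R$ since $R$ is Gorenstein, and apply $\operatorname{Hom}_R(S,-)$ or $S\otimes_R-$. Completeness of $S$ ensures $S$ is the setting of Theorem \ref{thm:gid}, and the weakly \'etale property ensures the base-changed complex still resolves $S^\infty$ by Gorenstein injective $S$-modules.
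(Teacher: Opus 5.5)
There is a genuine gap. The proposal never proves the statement everything is reduced to, namely $\Ext^i_R(R^\infty,S)=0$ for $i>0$.

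\textbf{The primary route.} Your Milnor-sequence step needs each ${}^eR$ to be a finitely generated $R$-module, i.e.\ $R$ F-finite. That is not a hypothesis. Completeness does not provide it: a complete local ring whose residue field $k$ has $[k:k^p]=\infty$ is not F-finite. The passage from $R$ to $\widehat R$ is also not justified.

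Even granting F-finiteness, the leftover term $\varprojlim^1_e\operatorname{Hom}_R({}^eR,S)$ is the whole difficulty, and the paper contains no argument for such a term. The spectral sequence in Theorem~\ref{thm:main1} is used only for $(1)\Rightarrow(2)$. The vanishing direction instead tensors a \v{C}ech complex with the balanced big Cohen--Macaulay module $R^\infty$ and then Matlis dualizes, which is where completeness enters.

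Lazard together with $\id_R(S)\le\dim R$ cannot close the gap either. Finite injective dimension only kills $\Ext^i_R(-,S)$ for $i>\dim R$. Moreover, neither $\Ext^i_R(R^\infty,-)$ nor $\varprojlim^1_e\operatorname{Hom}_R({}^eR,-)$ commutes with filtered colimits, which is the very obstruction you point out.

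\textbf{The fallback.} This is the paper's own route: base change $S^\infty\cong S\otimes_RR^\infty$, then Theorem~\ref{4.2} in both directions. The paper states the transfer $\Gid_R(R^\infty)<\infty\Rightarrow\Gid_S(S^\infty)<\infty$ in one line, and your sketch does not supply it.
\begin{itemize}
\item Applying $\operatorname{Hom}_R(S,-)$ to a Gorenstein injective coresolution of $R^\infty$ says something about $\operatorname{Hom}_R(S,R^\infty)$, not about $S\otimes_RR^\infty$.
\item For $S\otimes_R-$ you would need $S\otimes_RG$ to be Gorenstein injective over $S$ for every Gorenstein injective $R$-module $G$. That includes total acyclicity over $S$, which is not automatic before you know $S$ is Gorenstein.
\end{itemize}

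\textbf{The missing idea.} Use completeness of $S$ via Matlis duality, just as the paper does for $N=\omega_R$; your Ext-vanishing argument never uses that $S$ is complete.
\begin{enumerate}
\item Since $S\cong\operatorname{Hom}_S(E_S,E_S)$, with $E_S$ the injective hull of the residue field of $S$, adjunction and exactness of $\operatorname{Hom}_S(-,E_S)$ give $\Ext^i_R(R^\infty,S)\cong\operatorname{Hom}_S(\Tor^R_i(R^\infty,E_S),E_S)$.
\item As $S$ is $R$-flat, $E_S$ is an injective $R$-module. Hence it is a direct sum of modules $E_R(R/\mathfrak p)$, and $\Tor^R_i(R^\infty,E_R(R/\mathfrak p))\cong\Tor^{R_{\mathfrak p}}_i((R_{\mathfrak p})^\infty,E_{R_{\mathfrak p}}(k(\mathfrak p)))$.
\item $R_{\mathfrak p}$ is Gorenstein of some dimension $h$. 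So the shifted \v{C}ech complex on a system of parameters of $R_{\mathfrak p}$ is a flat resolution of $E_{R_{\mathfrak p}}(k(\mathfrak p))\cong H^h_{\mathfrak pR_{\mathfrak p}}(R_{\mathfrak p})$.
\item Therefore the Tor in step 2 equals $H^{h-i}_{\mathfrak pR_{\mathfrak p}}((R_{\mathfrak p})^\infty)$. This vanishes for $i>0$ because $(R_{\mathfrak p})^\infty$ is big Cohen--Macaulay over $R_{\mathfrak p}$.
\end{enumerate}
Combined with your Steps 1 and 2, this gives $\Ext^i_S(S^\infty,S)=0$ for all $i>0$, and Corollary~\ref{cor:gorenstein1} applied to $S$ finishes.

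Completed this way, your argument is a genuinely different route from the paper's. It transfers Ext-vanishing along the flat map $R\to S$ rather than Gorenstein injective dimension, and it needs neither F-finiteness nor any $\varprojlim^1$ computation.
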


Section 3 suggests second, independent approach to the same detection problem—namely, detecting Gorensteinness from \(\operatorname{Gid}_R(R^\infty)\)—is provided by Cohen factorization. For a complete local domain \(R\), we construct a flat extension \(R \to Q = \varinjlim_e Q_e\), where each \(Q_e\) is noetherian and the transition maps are flat when \(R\) contains a regular purely inseparable subextension. This yields a Cohen factorization
\(
R \xrightarrow{\text{flat}} Q \twoheadrightarrow R^\infty,
\) with \(Q\) coherent. The key open question (see Question~\ref{25}) asks whether \(\operatorname{Gid}_Q(Q/I) < \infty\) implies \(\operatorname{id}_Q(Q) < \infty\). A positive answer would imply that \(R\) is Gorenstein via flat descent. Thus, the Cohen factorization approach offers a potentially powerful alternative route that is of independent interest.

\medskip

(2): The main focus of Section~5 is the following question from \cite{Al}:

\begin{quote}
	\emph{If every parameter ideal of \(R\) is Frobenius closed, does it follow that every power of each parameter ideal is Frobenius closed?}
\end{quote}

We partially answer this affirmatively in Proposition \ref{5.4}. This exemplifies a recurring theme of the paper: the perfection \(R^\infty\) is not merely an auxiliary construction but a powerful test object that encodes and transmits homological information.
We also show in Proposition \ref{4.5} that every complete \(F\)-coherent reduced ring is a quotient of a regular ring by a cohomologically complete-intersection ideal. 
We further establish several properties of weak \(F\)-nilpotent and \(F\)-coherent.

In Section 6 we resolve a conjecture \cite[Conjecture 5.19]{Elham} by showing that:
\begin{theorem}\label{18}
	Let $(R,\mathfrak m)$ be a local ring of characteristic $p>0$. 
	Assume that
	\(
	\operatorname{Ext}^{i}_{R}(\frac{R}{\mathfrak m},R^{\infty})=0
	\)
	for some $i>\dim(R)$. Then $R$ is regular.
\end{theorem}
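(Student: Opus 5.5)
Fix $i>d=\dim R$ with $\operatorname{Ext}^i_R(R/\mathfrak m,R^\infty)=0$. The idea is to turn this vanishing into a statement about Frobenius powers of $\mathfrak m$, and then into finite flat dimension of $R^\infty$, which forces regularity by Kunz's theorem in its perfect-closure form ($R$ is regular iff $R^\infty$ is flat over $R$).

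\textbf{Step 1 (reductions).} First pass to the completion. Since $\widehat{R}^\infty\cong R^\infty\otimes_R\widehat R$ and $\operatorname{Ext}^i_R(R/\mathfrak m,-)$ of a module is the same as over $\widehat R$ after completing (the module $R/\mathfrak m$ is finite), we may assume $R$ is complete. Next, $R^\infty=(R_{\mathrm{red}})^\infty$ and $R_{\mathrm{red}}$ has the same dimension. So if the conclusion holds for $R_{\mathrm{red}}$, then $R_{\mathrm{red}}$ is regular; this step needs care to lift back to $R$.

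\textbf{Step 2 (rewriting the Ext).} Write $R^\infty=\varinjlim_e {}^eR$. Because $R/\mathfrak m$ has a resolution by finite free modules, Ext commutes with this direct limit:
\[
\operatorname{Ext}^i_R(R/\mathfrak m,R^\infty)\cong\varinjlim_e \operatorname{Ext}^i_R(R/\mathfrak m,{}^eR).
\]
Moreover $\operatorname{Ext}^i_R(R/\mathfrak m,{}^eR)\cong \operatorname{Ext}^i_R(R/\mathfrak m^{[p^e]},R)$ computed via $F^e$. That is, applying the Frobenius functor to a free resolution $P_\bullet$ of $k$ gives $F^e(P_\bullet)$, and the cohomology of $\operatorname{Hom}(F^e(P_\bullet),R)$ is the Ext of $H_0=R/\mathfrak m^{[p^e]}$ only when $F^e(P_\bullet)$ is acyclic.

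\textbf{Step 3 (reaching regularity) and the main obstacle.} The cleaner route is a Koszul/depth argument. Let $K$ be the Koszul complex on a minimal generating set $x_1,\dots,x_n$ of $\mathfrak m$, where $n=\operatorname{edim}R$. Over $R^\infty$, the ideal $\mathfrak mR^\infty$ equals its own radical and is idempotent-like, since $\mathfrak mR^\infty=(\mathfrak mR^\infty)^{[p]}$. The vanishing of $\operatorname{Ext}^i(k,R^\infty)$ for a single $i>d$ should then propagate, by a rigidity statement for such "perfect" modules (Bhatt--Scholze style: $\operatorname{Tor}$/Ext against $k$ of perfect modules stabilize), to vanishing of $\operatorname{Ext}^j(k,R^\infty)$ for all $j\ge i$. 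Granting this, the Bass-type numbers of $R^\infty$ eventually vanish. Dualizing with Matlis duality, $\operatorname{Tor}^R_j(k,E\otimes\cdots)$ vanishes, which gives $\operatorname{fd}_R R^\infty<\infty$. Then Theorem \ref{thm:main1}/Corollary \ref{cor:tor}-style arguments together with Kunz--Bhatt--Scholze ($\operatorname{fd}R^\infty<\infty\Rightarrow R$ regular) finish the proof.

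I expect the main obstacle to be exactly this rigidity: getting from vanishing at one index $i>\dim R$ to vanishing at all large indices. My first attempt would be to exploit that multiplication by the Frobenius is an isomorphism on $R^\infty$, and that $\operatorname{Ext}^i(k,R^\infty)=\varinjlim_e H^i(\operatorname{Hom}(K^{[p^e]},R))$-type limits. I would try to show that a nonzero class in degree $j>i$ would produce, via Frobenius-twisted Koszul comparisons, a nonzero class in degree $i$, using that $i>d$ so that local cohomology $H^i_{\mathfrak m}(R^\infty)=0$ cannot absorb it.
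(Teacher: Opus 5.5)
The decisive gap is in Step 3, after the rigidity. From $\operatorname{Ext}^j_R(k,R^\infty)=0$ for all $j\ge i$ you cannot conclude $\operatorname{fd}_R R^\infty<\infty$. Matlis duality only gives $\operatorname{Tor}^R_j(k,(R^\infty)^\vee)=0$, which is a statement about $(R^\infty)^\vee$, not $R^\infty$. Moreover, for modules that are not finitely generated, vanishing of Ext or Tor against the residue field alone says nothing about flat or injective dimension. If $\mathfrak p\neq\mathfrak m$ is a prime with $R_{\mathfrak p}$ singular, then $\operatorname{Ext}^\ast_R(k,k(\mathfrak p))=0=\operatorname{Tor}^R_\ast(k,k(\mathfrak p))$ (because $k_{\mathfrak p}=0$), yet $k(\mathfrak p)$ has infinite flat and injective dimension. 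So the route through Kunz's theorem is not available.

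The missing idea, which is how the paper finishes, is to use the $R^\infty$-module structure instead of $\operatorname{fd}_R R^\infty$:
\begin{enumerate}
\item Since $k$ is finitely presented over the noetherian ring $R$, $\operatorname{Ext}^j_R(k,-)$ commutes with arbitrary direct sums, so the eventual vanishing holds for every free $R^\infty$-module.
\item Take a finite-length free $R^\infty$-resolution of $R^\infty/\mathfrak mR^\infty$ (from the finite global dimension of the perfect ring $R^\infty$). Dimension shifting along its syzygies gives $\operatorname{Ext}^j_R(k,R^\infty/\mathfrak mR^\infty)=0$ for $j\gg0$.
\item As an $R$-module, $R^\infty/\mathfrak mR^\infty$ is a nonzero $k$-vector space, so $k$ is a direct summand. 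Hence $\operatorname{Ext}^j_R(k,k)=0$ for $j\gg0$, and $R$ is regular.
\end{enumerate}

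The step you single out as the main obstacle needs no Frobenius input. Propagating the vanishing from one index $i>\dim R$ to all $j\ge i$ holds for an arbitrary $R$-module, by the rigidity theorem the paper imports as [IYENGAR, Theorem 1.2]; this is the only place where $i>\dim R$ is used. Your sketch via ``Frobenius-twisted Koszul comparisons'' is not yet an argument. Its heuristic also rests on a false claim: $\mathfrak mR^\infty$ is neither radical nor equal to $\mathfrak m^{[p]}R^\infty$. For $R=k[[x]]$ one has $x^{1/p}\in\sqrt{xR^\infty}\setminus xR^\infty$; it is the maximal ideal $\sqrt{\mathfrak mR^\infty}$ of $R^\infty$ that is perfect.

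Steps 1 and 2 are never used. The reduction to $R_{\mathrm{red}}$ could not work as stated anyway: the hypothesis concerns Ext over $R$, and, for example, $k[x]/(x^2)$ is not regular although its reduction is.
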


Theorem \ref{18} has the following two applications:

\begin{enumerate}
	\item[(i)]
	\(
	\operatorname{Ext}^{+}_{R}(R^\infty,R^\infty)=0 \Longrightarrow 
	\text{$R$ is regular}.
	\)
	
	\item[(ii)]
	\(
	[\operatorname{Tor}^{R}_{i}(k,R^\infty)=0 \text{ for some } i>0] \Longrightarrow 
	\text{$R$ is regular}.
	\)
\end{enumerate}

The first drops the isolated singularity hypothesis from \cite{Ryo} when working with the minimal perfect closure. 
The second recovers a result of Aberbach--Li \cite{Ab} via a short and novel argument.

 Finally, we suggest that interested readers consult the forthcoming paper \cite{Ryo} for further homological properties of rings and modules in the context of perfect algebras, as well as for mixed-characteristic analogues of the results presented here.
	\maketitle
		\section{Perfect closure detects finite homological dimensions}

Let $(R,\mathfrak m,k)$ be a Noetherian local ring of  prime characteristic $p$.
The notation \(\pd_R(-)\) (resp. \(\id_R(-)\)) stands for the projective (resp. injective) dimension of \((-)\).

\begin{proposition}
	Let $R$ be a local ring of prime characteristic, and let
	$N$ be  either finitely generated  or a complete $R$-module. Then the following are equivalent:
	
	\begin{enumerate}
		\item
$
		\operatorname{Ext}_R^i(R^\infty,N)=0
		\qquad\text{for all } i>0,
	$
		
		\item
$
		\operatorname{id}_R(N)<\infty.
$
	\end{enumerate}
\end{proposition}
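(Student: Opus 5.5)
Both implications go through the finiteness of the flat (equivalently, projective) dimension of $R^\infty$ over $R$. For (2)$\Rightarrow$(1) we use the Bhatt--Scholze input; for (1)$\Rightarrow$(2) we pass to the inverse system $\{{}^eR\}_e$ and test against the residue field.

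\emph{(2)$\Rightarrow$(1).} Since $R^\infty=\varinjlim_e {}^eR$ is a countable direct limit, there is the Milnor-type short exact sequence
\[0\to \varprojlim\nolimits^1_e \Ext^{i-1}_R({}^eR,N)\to \Ext^i_R(R^\infty,N)\to \varprojlim_e \Ext^i_R({}^eR,N)\to 0,\]
which comes from the telescope resolution $0\to\bigoplus_e {}^eR\to\bigoplus_e{}^eR\to R^\infty\to 0$. If $\id_R N<\infty$, then $R$ is Cohen--Macaulay. By the Bhatt--Scholze result cited in the introduction, $R^\infty$ has flat dimension at most $\dim R$ over $R$. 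For a flat module $F$ over a noetherian ring and a module $N$ of finite injective dimension, $\Ext^{>0}_R(F,N)=0$: write $F$ as a filtered colimit of finite free modules, note that $\Ext_R(F,E)=0$ in positive degrees for each injective $E$, and induct along a finite injective resolution of $N$. This reduces the claim to the flat case, and the remaining task is to show that the finite flat dimension of $R^\infty$ does not create Ext in degrees $1,\dots,\dim R$. Here I would use the completeness of $N$ (or Matlis duality for finitely generated $N$ over $\widehat R$). Take a finite injective resolution $0\to N\to I^0\to\cdots\to I^d\to 0$, where each $I^j$ is a sum of $E(R/\mathfrak p)$'s. Then $\Ext^i_R(R^\infty,E(R/\mathfrak p))=0$ because $E(R/\mathfrak p)$ is injective, so the complex $\operatorname{Hom}(R^\infty,I^\bullet)$ computes $\mathbf{R}\!\operatorname{Hom}(R^\infty,N)$. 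Its cohomology equals $\operatorname{Hom}(\mathbf{R}\Gamma_{\mathfrak m}$-type dual$)$, and I would identify it, via the dual $\Ext^i_R(R^\infty,N)^\vee\cong\Tor_i^R(R^\infty,N^\vee)$ after completing, with Tor of $R^\infty$ against an $\mathfrak m$-torsion module of finite flat dimension. The required vanishing $\Tor^R_{>0}(R^\infty,-)=0$ on such modules is a Peskine--Szpiro/Herzog-type statement that follows from the Frobenius functor being exact on finite-length complexes of finite projective dimension, and it passes to the colimit since Tor commutes with direct limits.

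\emph{(1)$\Rightarrow$(2).} Apply the Milnor sequence again. From $\Ext^{>0}_R(R^\infty,N)=0$ we get $\varprojlim_e\Ext^i_R({}^eR,N)=0$ and $\varprojlim^1_e=0$ for all $i\ge1$. For $N$ finitely generated over a complete ring (or $N$ complete), the groups $\Ext^i_R({}^eR,N)$ are linearly compact, so the inverse systems are Mittag-Leffler; this is where the hypothesis on $N$ is used. The aim is to upgrade this to vanishing of $\Ext^i_R({}^eR,N)$ for infinitely many $e$, and then conclude by Herzog's Theorem~\ref{thm:herzog}. When $R$ is not $F$-finite, I would first reduce to that case by a $\Gamma$-construction or Cohen-type faithfully flat extension, which preserves $\id$ and the Ext vanishing. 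Alternatively, one can run the spectral sequence $\Ext^p_R(\Tor_q^R(k,R^\infty),N)\Rightarrow\Ext^{p+q}_R(k\otimes^{\mathbf L}R^\infty,N)$ to show $\Ext^{j}_R(k,N)=0$ for $j\gg0$.

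\textbf{Main obstacle.} The hard step is the passage from vanishing at the limit $R^\infty$ back to vanishing for individual ${}^eR$ (or for $k$). The transition maps ${}^eR\to{}^{e+1}R$ need not split, so a zero limit does not force the terms to vanish. My first attempt would bound the eventual images using the finite flat dimension of $R^\infty$ over $R$ to control high Ext. I would then argue that the image of $\Ext^i({}^eR,N)$ in later stages is eventually zero, and this suffices for Herzog's argument, which in fact only needs the Frobenius maps on Ext to vanish in high degree.
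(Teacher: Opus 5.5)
Your proposal does not prove (1)$\Rightarrow$(2): you identify the obstacle yourself, and the repairs you suggest do not get past it. Linear compactness of $\Ext^i_R({}^eR,N)$, which already needs ${}^eR$ finitely generated (i.e.\ $F$-finiteness), gives $\varprojlim^1=0$, not the Mittag-Leffler condition. Even an inverse system with eventually zero transition maps says nothing about vanishing of the individual groups $\Ext^i_R({}^eR,N)$ for infinitely many $e$, which is what Theorem~\ref{thm:herzog} requires. Your claim that Herzog's argument only needs the transition maps to vanish is unsupported. The reduction to the $F$-finite case is also unjustified: $\Ext_R(R^\infty,-)$ does not commute with flat base change, and a $\Gamma$-construction changes the perfect closure, so there is no reason the hypothesis transports. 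Your alternative spectral sequence abuts to $\Ext_R(k\otimes^{\mathbf L}_R R^\infty,N)$, not $\Ext_R(k,N)$. Its $E_2$-page is also unbounded, since $\Tor^R_q(k,R^\infty)\neq 0$ for every $q>0$ when $R$ is not regular (Aberbach--Li).

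The missing idea is to test against an honest $R^\infty$-module $L$. The change-of-rings spectral sequence $\Ext^p_{R^\infty}(L,\Ext^q_R(R^\infty,N))\Rightarrow\Ext^{p+q}_R(L,N)$ collapses under (1) to $\Ext^p_{R^\infty}(L,\operatorname{Hom}_R(R^\infty,N))\cong\Ext^p_R(L,N)$. Take $L=R^\infty/\fm R^\infty$. The left side vanishes for $p>\operatorname{gldim}R^\infty$, which is the Bhatt--Scholze input the paper uses: finiteness of the global dimension of the ring $R^\infty$. On the right, $L\cong\bigoplus k$ as an $R$-module, so the right side is $\prod\Ext^p_R(k,N)$. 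Hence $\Ext^p_R(k,N)=0$ for $p\gg0$ and $\id_R N<\infty$. This needs no Frobenius-by-Frobenius descent, no $F$-finiteness, and no Herzog.

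For (2)$\Rightarrow$(1) you rely on ``$R^\infty$ has flat dimension at most $\dim R$ over $R$'', which is false. Bhatt--Scholze concerns the global dimension of the ring $R^\infty$, whereas $\operatorname{fd}_R R^\infty<\infty$ forces $R$ to be regular (Corollary~\ref{4.3}, or Aberbach--Li). So your ``reduction to the flat case'' is empty for every non-regular Cohen--Macaulay ring. Your auxiliary lemma is also false as stated: $\Ext^1_{\mathbb Z}(\mathbb Q,\mathbb Z)\neq 0$ although $\mathbb Q$ is flat and $\id_{\mathbb Z}\mathbb Z=1$.

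The duality step can be saved if stated correctly. For $N$ finitely generated over complete $R$ one has $N\cong N^{\vee\vee}$, hence $\Ext^i_R(R^\infty,N)\cong\Tor_i^R(R^\infty,N^\vee)^\vee$. The isomorphism you wrote needs the first argument finitely generated. You then still need $\Tor^R_{>0}(R^\infty,N^\vee)=0$ for the Artinian module $N^\vee$, and Peskine--Szpiro does not apply to it directly.

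The content you are missing is what the paper supplies:
\begin{itemize}
\item $R$ is Cohen--Macaulay by Bass' conjecture, so for a system of parameters $\underline x$ each $\underline x^{[q]}$ is $R$-regular, and $R^\infty$ is balanced big Cohen--Macaulay.
\item A finite $\omega$-resolution reduces to $N=\omega_R$.
\item The \v{C}ech complex on $\underline x$ is a flat resolution of $H^d_\fm(R)\cong\omega_R^\vee$, which gives $\Tor_i^R(R^\infty,\omega_R^\vee)\cong H^{d-i}_\fm(R^\infty)=0$ for $i>0$.
\end{itemize}
Your Frobenius-plus-colimit idea can be run inside this framework: write $H^d_\fm(R)=\varinjlim_t R/(\underline x^t)$ and use that $\underline x^{tq}$ is $R$-regular. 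This works only after these reductions. As written, the vanishing of $\Tor$ on an arbitrary $\fm$-torsion module of finite flat dimension is merely asserted.
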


\begin{proof} We present the proof when $M$ is finitely generated.
	
	\noindent
	$(2)\Rightarrow(1)$.
	It follows by the so called Bass' conjecture (see \cite[9.6.2]{BH}), and   from $
	\operatorname{id}_R(N)<\infty
	$ that $R$ is Cohen--Macaulay. Let $\underline{x}:=x_1,\ldots, x_d$ be a parameter sequence. Then $\underline{x}^{[q]}:=x_1^{[q]},\ldots, x_d^{[q]}$ is an $R$-regular sequence. In other words, $\underline{x}$ is $R^{1/q}$-regular, i.e., ${}^e R$ is Cohen-Macaulay. Taking direct limit,
  $R^\infty$ is a balanced big Cohen--Macaulay module.
	By \cite[Corollary~7.7]{sim} we obtain the claim.
Here is a more elementary argument.
	Any module of finite injective dimension,  has finite $\omega$-resolution (see \cite[3.3.28(b)]{BH}), hence we may assume
	that
$
	N=\omega_R .
$
Recall that $\underline{x}$ is a parameter sequence.	Then the exact sequence
	\[
	0\longrightarrow
	R 
	\longrightarrow \oplus R_{x_i}
\longrightarrow
	\cdots
	\longrightarrow
	R_{x_1\ldots x_d}
	\longrightarrow
	\omega_R^\vee
	\longrightarrow
	0
	\]
	remains exact after tensoring with $R^\infty$, since $R^\infty$ is balanced
	big Cohen--Macaulay and the homologies are just the local cohomology of $R^\infty$. Hence
$
	\operatorname{Tor}_i^R(R^\infty,\omega_R^\vee)=0.
$
	Therefore, taking a Matlis dual, and using second isomorphism in  Fact \ref{fact:matlis-duality}, we have
$
	\operatorname{Ext}_R^i(R^\infty,\omega_R)	=\operatorname{Ext}_R^i(R^\infty,\omega_R^{\vee\vee})=0.
$

	\noindent
	$(1)\Rightarrow(2)$.
	There is a spectral sequence (see \cite[Theorem 10.74]{Rot}):
	\[
	E_2^{pq}
	=
	\operatorname{Ext}_{R^\infty}^{\,p}
	\!\left(
	L,
	\operatorname{Ext}_R^{\,q}(R^\infty,N)
	\right)
	\Longrightarrow
	\operatorname{Ext}_R^{\,p+q}(L,N),
	\]
	where $L$ is any $R^\infty$-module.
	By assumption (1), the spectral sequence collapses at $q=0$.
	Hence, we lead to the following isomorphism
	\[
	\operatorname{Ext}_{R^\infty}^{\,p}
	(L,\operatorname{Hom}_R(R^\infty,N))
	\cong
	\operatorname{Ext}_R^{\,p}(L,N)
\quad(\ast)	\]
	for every $p$.
		Recall from  Bhatt--Scholze \cite{BS1} that
	\[
	g:=\operatorname{gldim}(R^\infty)<\infty.
	\]

Set $L:=R^\infty/\fm R^\infty$. Then as an $R$-module, we have $L=\oplus R /\fm $, since $\fm L=0$.
	Therefore, the left hand side of $ (\ast)$ is zero for all $p>g$. It follows
	\[
	\operatorname{Ext}_R^{\,p}(L,N)=	\operatorname{Ext}_R^{\,p}(\oplus R /\fm ,N)=\prod \operatorname{Ext}_R^{\,p}(  R /\fm ,N)=0
	\qquad\text{for } p\gg0.
	\] Thus, 	$
  \operatorname{Ext}_R^{\,p}(  R /\fm ,N)=0
$  {for all} $p\gg0$.
 	By Roberts' theorem (see \cite[Theorem 2]{Ro2}), $\mu^n(\mathfrak m,N)$ is nonzero only in the range
$
	\bigl[\operatorname{depth}(N),\,\operatorname{id}(N)\bigr].
$
	Hence
$
	\operatorname{id}(N)<\infty.
$
\end{proof}
	
Let $(-)^\vee=\operatorname{Hom}_R(-,E_R(k))$ denote the Matlis duality
functor.

\begin{fact}(See \cite[4.1]{sim}).
	\label{fact:matlis-duality}
	Let $L$, $L'$, and $N$ be $R$-modules such that $N$ is Noetherian. Since $E_R(k)$ is injective we have following isomorphisms:
	\[
	\operatorname{Ext}^i_R(N, L')^\vee \cong \operatorname{Tor}^R_i(N, L'^\vee), \qquad
	\operatorname{Tor}^R_i(L, L')^\vee \cong \operatorname{Ext}^i_R(L, L'^\vee),
	\]where the second isomorphism is a consequence of Hom–tensor adjointness.
\end{fact}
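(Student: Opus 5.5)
The statement consists of two isomorphisms. Both follow the same pattern: resolve the first variable, apply a standard natural isomorphism of functors at the level of complexes, and then use that $(-)^\vee=\operatorname{Hom}_R(-,E_R(k))$ is exact, because $E_R(k)$ is injective. Exactness means $(-)^\vee$ commutes with taking (co)homology. Throughout, $R$ is the noetherian local ring fixed in this section.

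\emph{The second isomorphism.} Let $P_\bullet\to L$ be a projective resolution, so that $\operatorname{Tor}^R_i(L,L')=H_i(P_\bullet\otimes_R L')$.
\begin{enumerate}
\item Hom--tensor adjointness gives, for each $n$, an isomorphism
\[
\operatorname{Hom}_R(P_n\otimes_R L',E_R(k))\cong \operatorname{Hom}_R(P_n,\operatorname{Hom}_R(L',E_R(k))).
\]
It is natural in $P_n$, so it commutes with the differentials and yields an isomorphism of cochain complexes $(P_\bullet\otimes_R L')^\vee\cong \operatorname{Hom}_R(P_\bullet,L'^\vee)$.
\item The cohomology of the right-hand complex is $\operatorname{Ext}^i_R(L,L'^\vee)$.
\item Since $(-)^\vee$ is exact, it preserves kernels and cokernels. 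Hence $H^i\bigl((P_\bullet\otimes_R L')^\vee\bigr)\cong H_i(P_\bullet\otimes_R L')^\vee=\operatorname{Tor}^R_i(L,L')^\vee$.
\end{enumerate}
No finiteness is needed here.

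\emph{The first isomorphism.} Since $R$ is noetherian and $N$ is finitely generated, choose a resolution $F_\bullet\to N$ by finite free modules. Then $\operatorname{Tor}^R_i(N,L'^\vee)=H_i(F_\bullet\otimes_R L'^\vee)$ and $\operatorname{Ext}^i_R(N,L')=H^i(\operatorname{Hom}_R(F_\bullet,L'))$.
\begin{enumerate}
\item For any module $F$ there is the tensor-evaluation map
\[
\theta_F: F\otimes_R \operatorname{Hom}_R(L',E_R(k))\longrightarrow \operatorname{Hom}_R(\operatorname{Hom}_R(F,L'),E_R(k)),
\]
given by $x\otimes\varphi\mapsto(\alpha\mapsto\varphi(\alpha(x)))$. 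It is natural in $F$.
\item $\theta_R$ is the identity under the canonical identifications. Both sides are additive and commute with finite direct sums, so $\theta_F$ is an isomorphism for every finite free $F$.
\item By naturality, $\theta$ gives an isomorphism of complexes $F_\bullet\otimes_R L'^\vee\cong (\operatorname{Hom}_R(F_\bullet,L'))^\vee$.
\item Exactness of $(-)^\vee$ again gives $H_i\bigl((\operatorname{Hom}_R(F_\bullet,L'))^\vee\bigr)\cong \operatorname{Ext}^i_R(N,L')^\vee$. This proves $\operatorname{Ext}^i_R(N,L')^\vee\cong\operatorname{Tor}^R_i(N,L'^\vee)$.
\end{enumerate}

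\emph{Main point of care.} The only delicate step is the first isomorphism. There one must check that $\theta$ is compatible with the differentials, which holds because $\theta$ is natural in $F$. One must also use that every $F_n$ is \emph{finitely generated} free. For infinite free modules $\theta_F$ fails to be an isomorphism, since $\operatorname{Hom}_R(\bigoplus R,L')=\prod L'$ and the dual of a product is not the direct sum of the duals. This is exactly where the noetherian hypothesis on $N$ (and on $R$) enters. The second isomorphism, by contrast, is purely formal.
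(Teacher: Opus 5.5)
Your proof is correct and follows the route the paper indicates. The paper gives no argument beyond citing \cite[4.1]{sim} and attributing the second isomorphism to Hom--tensor adjointness, which is exactly your adjunction-plus-exactness-of-$(-)^\vee$ argument. Your tensor-evaluation isomorphism $\theta_F$, applied to a resolution by finitely generated free modules, is the standard argument behind the cited first isomorphism.
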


\begin{corollary}
	Let $(R,\mathfrak m,k)$ be a local ring of prime characteristic, and let
	$L$ be an $\fm$-torsion $R$-module. Suppose that
$
	\operatorname{Tor}_i^R(R^\infty,L)=0$ 
	 {for all positive } $i$.
	Then
$
	\operatorname{pd}_R(L)<\infty,
$
	and consequently $R$ is Cohen--Macaulay.
\end{corollary}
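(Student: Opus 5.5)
Dualize the hypothesis and reduce to the Proposition just proved, applied with $N$ complete. Take $N:=L^\vee=\operatorname{Hom}_R(L,E_R(k))$. Since $L$ is $\fm$-torsion, $L^\vee$ is a complete $R$-module: it is $\fm$-adically complete, being the Matlis dual of a torsion module, hence an inverse limit of the duals $(0:_L\fm^n)^\vee$, each killed by $\fm^n$. By the second isomorphism in Fact \ref{fact:matlis-duality},
\[
\operatorname{Ext}_R^i(R^\infty,L^\vee)\cong \operatorname{Tor}_i^R(R^\infty,L)^\vee=0\qquad (i>0).
\]
Since the Proposition covers complete modules, we conclude $\operatorname{id}_R(L^\vee)<\infty$.

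Next we pass back from $L^\vee$ to $L$. Because $L$ is $\fm$-torsion, it is the union of submodules killed by powers of $\fm$, and one has $L\cong L^{\vee\vee}$ in the appropriate sense, or at least
\[
\operatorname{Tor}_i^R(k,L)^\vee\cong\operatorname{Ext}^i_R(k,L^\vee)
\]
by the second isomorphism of Fact \ref{fact:matlis-duality} again. Finiteness of $\operatorname{id}_R(L^\vee)$ gives $\operatorname{Ext}^i_R(k,L^\vee)=0$ for $i\gg0$; for an arbitrary module this holds for $i>\operatorname{id}$. Hence $\operatorname{Tor}_i^R(k,L)=0$ for $i\gg0$. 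For an $\fm$-torsion module, vanishing of $\operatorname{Tor}_i^R(k,L)$ for large $i$ gives finite flat dimension. One route is to compute $\operatorname{Tor}^R(-,L)$ via a minimal free resolution of finitely generated modules, using that $L$ is torsion and that every finitely generated module with support in $\{\fm\}$ has a filtration by copies of $k$. Then the theorem of Jensen/Gruson--Raynaud that flat dimension is bounded by $\dim R$ gives $\operatorname{pd}_R(L)<\infty$. Alternatively, $\operatorname{fd}_R L<\infty$ already implies $\operatorname{pd}_R L<\infty$ over a noetherian ring of finite Krull dimension.

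\textbf{Main obstacle.} The delicate step is exactly this passage from vanishing of $\operatorname{Tor}_{\gg0}(k,L)$ to finite projective dimension for a non-finitely generated torsion module. The first thing I would try is the dual route: $\operatorname{id}_R L^\vee<\infty$ implies $\operatorname{fd}_R L^{\vee}{}^{\vee}<\infty$, since Matlis duality interchanges flat and injective dimension. Then I would use that $L$ is a pure submodule, indeed a direct summand up to completion issues, of $L^{\vee\vee}$ when $L$ is torsion, and finally upgrade flat dimension to projective dimension by Gruson--Raynaud.

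For the Cohen--Macaulay conclusion, a nonzero $\fm$-torsion module of finite projective dimension forces $R$ to be Cohen--Macaulay by the (big-module) new intersection theorem, or equivalently Roberts' theorem in the form used above. If $L=0$ the statement is vacuous, so we assume $L\neq0$, as is implicit in the statement. Alternatively, $\operatorname{id}_R(L^\vee)<\infty$ with $L^\vee$ nonzero and complete directly yields Cohen--Macaulayness via the Bass-type consequence used in the $(2)\Rightarrow(1)$ part of the Proposition.
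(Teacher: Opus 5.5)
Your skeleton matches the paper's. You dualize with the second isomorphism of Fact~\ref{fact:matlis-duality}, use completeness of $L^\vee$ to get $\id_R(L^\vee)<\infty$ from the Proposition, pass to $\operatorname{fd}_R(L)<\infty$, upgrade to $\pd_R(L)<\infty$, and deduce Cohen--Macaulayness from the nonzero $\fm$-torsion module $L$ of finite projective dimension.

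The gap is the step you flag yourself, $\id_R(L^\vee)<\infty\Rightarrow\operatorname{fd}_R(L)<\infty$, which you do not actually close. Specializing the Fact to the test module $k$ gives only $\Tor_i^R(k,L)=0$ for $i>\id_R(L^\vee)$. Your justification, that modules supported at $\fm$ are filtered by copies of $k$, controls $\Tor_i^R(M,L)$ only for $M$ of finite length. Flat dimension, however, must be tested against all $R/I$, e.g.\ $R/\mathfrak p$ with $\mathfrak p\neq\fm$, which admit no such filtration. The claim is true for $\fm$-torsion $L$, but it needs an extra argument, for instance induction on $\dim M$. For $x\in\fm\setminus\mathfrak p$, the sequence $0\to R/\mathfrak p\xrightarrow{x}R/\mathfrak p\to R/(\mathfrak p+xR)\to 0$ shows that $x$ acts injectively on $\Tor_i^R(R/\mathfrak p,L)$ for $i>\id_R(L^\vee)$. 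That module is $\fm$-torsion, hence zero. Your detour through $L^{\vee\vee}$ can be made to work using only that $L\to L^{\vee\vee}$ is a pure monomorphism (it is neither an isomorphism nor split in general), but it is equally unnecessary.

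The paper's one-line ``again by previous fact'' is the fix. The second isomorphism of Fact~\ref{fact:matlis-duality} holds for an arbitrary first argument, so $\Tor_i^R(M,L)^\vee\cong\Ext^i_R(M,L^\vee)=0$ for every module $M$ and every $i>\id_R(L^\vee)$. Since $E_R(k)$ is a cogenerator, $\Tor_i^R(M,L)=0$, i.e.\ $\operatorname{fd}_R(L)\le\id_R(L^\vee)$, and no torsion hypothesis is used at this step.

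Two smaller points:
\begin{itemize}
\item The Jensen/Gruson--Raynaud input is that flat modules have projective dimension at most $\dim R$, so $\pd_R(L)\le\operatorname{fd}_R(L)+\dim R$. Your ``alternatively'' sentence states this correctly.
\item Your alternative route to Cohen--Macaulayness via a Bass-type statement for $L^\vee$ is not supported by the Proposition. Its Bass input concerns finitely generated modules, and $L^\vee$ is typically not finitely generated. Keep the paper's route through the nonzero $\fm$-torsion module $L$ of finite projective dimension.
\end{itemize}
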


\begin{proof}
In view of  second isomorphism in  Fact \ref{fact:matlis-duality}, we obtain
	\[
	0=
	\operatorname{Tor}_i^R(R^\infty,L)^\vee
	\cong
	\operatorname{Ext}_R^i(R^\infty,L^\vee).
	\]
	
	Since $L$ is $\fm$-torsion, and by \cite[\S 4.2, Lemma]{sim} $L^\vee$ is complete. Hence, by the previous
	proposition,
$
	\operatorname{id}_R(L^\vee)<\infty.
$
Again by previous fact,
$
	\operatorname{fd}_R(L)<\infty,
$
	and therefore
$
	\operatorname{pd}_R(L)<\infty.
$	Finally, the existence of a nonzero $\fm$-torsion module of finite projective
	dimension implies that $R$ is Cohen--Macaulay (see e.g. \cite[Observation 3.15]{moh5}).
\end{proof}
\begin{corollary}\label{25}
	Let $R$ be a local ring of prime characteristic. Then $R$ is Gorenstein if and only if $\operatorname{Ext}_R^+(R^\infty, R) = 0$.
\end{corollary}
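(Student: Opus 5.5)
This is essentially the preceding proposition applied with $N=R$, together with the fact that a local ring is Gorenstein exactly when $\id_R(R)<\infty$. Since $R$ is a finitely generated module over itself, the proposition applies directly: $\operatorname{Ext}_R^+(R^\infty,R)=0$ holds if and only if $\id_R(R)<\infty$. By \cite[3.1.17]{BH}, this is the definition of $R$ being Gorenstein. So the corollary follows by specializing $N:=R$ in both directions.

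For the forward implication I will rerun the $(1)\Rightarrow(2)$ argument with $N=R$, to make sure nothing depends on completeness. Put $L:=R^\infty/\fm R^\infty$, which as an $R$-module is a direct sum of copies of $k$. The change-of-rings spectral sequence together with the vanishing of $\operatorname{Ext}^{>0}_R(R^\infty,R)$ gives
\[
\operatorname{Ext}^p_{R^\infty}\bigl(L,\operatorname{Hom}_R(R^\infty,R)\bigr)\cong \operatorname{Ext}^p_R(L,R)=\prod\operatorname{Ext}^p_R(k,R).
\]
Finiteness of $\operatorname{gldim}(R^\infty)$ from Bhatt--Scholze then forces $\operatorname{Ext}^p_R(k,R)=0$ for $p\gg0$. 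Roberts' theorem on the Bass numbers then yields $\id_R(R)<\infty$. None of these steps uses completeness, since Roberts' theorem holds for any finitely generated module over a local ring.

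For the reverse implication, assume $R$ is Gorenstein. Then $R$ is Cohen--Macaulay and $\omega_R\cong R$, so no reduction along a finite $\omega$-resolution is needed. As in the proposition, $R^\infty$ is a balanced big Cohen--Macaulay $R$-module, so the Čech complex on a system of parameters stays exact after tensoring with $R^\infty$. This gives $\operatorname{Tor}^R_{>0}(R^\infty,\omega_R^\vee)=0$, and Matlis duality (Fact \ref{fact:matlis-duality}) gives $\operatorname{Ext}^{>0}_R(R^\infty,\omega_R^{\vee\vee})=0$.

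The main obstacle, if any, is the final identification $\omega_R^{\vee\vee}\cong R$, which only holds when $R$ is complete. To avoid this, I will pass to $\widehat R$. The module $\operatorname{Ext}^i_R(R^\infty,R)$ is not obviously compatible with completion because $R^\infty$ is not finitely generated, so I would instead argue via flat base change. Alternatively, I would invoke \cite[Corollary~7.7]{sim} directly: over a Cohen--Macaulay ring, a balanced big Cohen--Macaulay module has no higher Ext into a finitely generated module of finite injective dimension. This gives $\operatorname{Ext}^{>0}_R(R^\infty,R)=0$ without any completeness hypothesis.
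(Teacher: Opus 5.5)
Your reduction is the paper's own proof: the corollary is the proposition with $N=R$. Your forward direction is the paper's spectral-sequence and Roberts argument, and it indeed uses no completeness. You are also right that the Matlis-duality argument for the converse only gives $\operatorname{Ext}^{>0}_R(R^\infty,R^{\vee\vee})=\operatorname{Ext}^{>0}_R(R^\infty,\widehat R)=0$.

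The gap is that neither of your repairs gets you from $\widehat R$ back to $R$. The principle you attribute to Simon is false without completeness. Let $R=\mathbb{F}_p[x]_{(x)}$, let $K$ be its fraction field, and let $M=R\oplus K$. Every parameter $y$ is $M$-regular and $M/yM=R/yR\neq 0$, so $M$ is balanced big Cohen--Macaulay, and $R$ is Gorenstein. Now use $0\to R\to\widehat R\to\widehat R/R\to 0$ together with three facts:
\begin{itemize}
\item $\operatorname{Hom}_R(K,\widehat R)=0$;
\item $\operatorname{Ext}^1_R(K,\widehat R)=0$, since $K$ is flat and $\widehat R$ is complete;
\item $\widehat R/R$ is a nonzero $K$-vector space.
\end{itemize}
These give $\operatorname{Ext}^1_R(M,R)\cong\operatorname{Ext}^1_R(K,R)\cong\widehat R/R\neq 0$. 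So Simon's corollary cannot apply to a non-complete target such as $R$.

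The same example blocks the completion route. Here $\operatorname{Ext}^{1}_R(K,\widehat R)=0$ while $\operatorname{Ext}^1_R(K,R)\neq 0$, so vanishing into $\widehat R$ does not descend to $R$ when the first argument is not finitely generated. Flat base change along $R\to\widehat R$ only identifies $\operatorname{Ext}_R(R^\infty,\widehat R)$ with $\operatorname{Ext}_{\widehat R}(R^\infty\otimes_R\widehat R,\widehat R)$. It says nothing about $\operatorname{Ext}_R(R^\infty,R)$.

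As written, you prove the converse only for complete $R$. That is also all the paper's argument delivers, since it too identifies $\omega_R$ with $\omega_R^{\vee\vee}$ and applies Simon's result to a finitely generated, not necessarily complete, target. For a general local $R$ you need an input specific to $R^\infty$.

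For instance, suppose $R$ is $F$-finite. Then each ${}^eR$ is a finitely generated maximal Cohen--Macaulay module, so $\operatorname{Ext}^{>0}_R({}^eR,R)=0$. The Milnor sequence for the countable colimit $R^\infty=\varinjlim_e{}^eR$ then gives $\operatorname{Ext}^{\geq 2}_R(R^\infty,R)=0$ and $\operatorname{Ext}^1_R(R^\infty,R)\cong\varprojlim^1_e\operatorname{Hom}_R({}^eR,R)$. This $\varprojlim^1$ vanishes by Mittag-Leffler:
\begin{itemize}
\item for Gorenstein $F$-finite $R$, $\operatorname{Hom}_R({}^eR,R)\cong{}^eR$ is generated by the iterate $\Phi^e$ of a generator $\Phi$ of $\operatorname{Hom}_R({}^1R,R)$;
\item the image of $\operatorname{Hom}_R({}^{e+n}R,R)$ in $\operatorname{Hom}_R({}^eR,R)$ corresponds to the ideal $\Phi^n({}^nR)$;
\item these ideals stabilize by Gabber's theorem.
\end{itemize}
Without an argument of this kind, you should add the hypothesis that $R$ is complete, as in Theorem~\ref{thm:main1}.
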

In what follows, we assume some familiarity with Gorenstein homological algebra, see \cite{Totally}
for a nice survey.
The notation \(\Gpd_R(-)\) (resp. \(\Gid_R(-)\)) stands for the Gorenstein  projective (resp. injective) dimension of \((-)\).
\begin{corollary}
	Let $R$ be a complete local ring of prime characteristic. If $R^\infty$ has finite Gorenstein projective dimension, then $R$ is Gorenstein.
\end{corollary}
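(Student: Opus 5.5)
We aim to reduce to Corollary \ref{25}, i.e.\ to show $\operatorname{Ext}_R^i(R^\infty,R)=0$ for all $i>0$. The key input is the standard property of Gorenstein projective dimension: if $\Gpd_R(M)<\infty$, then $\operatorname{Ext}^i_R(M,P)=0$ for all $i>\Gpd_R(M)$ and every module $P$ of finite projective dimension. More generally this holds for $P$ of finite flat or finite injective dimension. This is the module-theoretic Auslander--Bridger type vanishing, see \cite{Totally}.

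\emph{Step 1.} Put $n:=\Gpd_R(R^\infty)<\infty$. The vanishing gives $\operatorname{Ext}^i_R(R^\infty,R)=0$ for $i>n$. This holds for any commutative noetherian ring, since $R$ is projective.

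\emph{Step 2.} Remove the bound $n$ by a Frobenius/dimension-shifting trick. As rings, $R^\infty\cong {}^eR^\infty$, and restriction along $\F^e$ identifies $R^\infty$ with itself as an $R$-module, because $R^\infty=\varinjlim {}^eR$ is stable under Frobenius twist. Hence twisting gives nothing new. Instead, use a Gorenstein projective approximation: there is an exact sequence $0\to K\to G\to R^\infty\to 0$ with $G$ Gorenstein projective and $\pd_R K<\infty$ (indeed $\pd_R K=n-1$). Then $\operatorname{Ext}^{i}_R(G,R)=0$ for $i>0$, so $\operatorname{Ext}^i_R(R^\infty,R)\cong \operatorname{Ext}^{i-1}_R(K,R)$ for $i\ge 2$.

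\emph{Step 3.} A cleaner route avoids Step 2. Since $R$ is complete, let $\omega$ be a dualizing complex or canonical module. I would instead test against $N$ finitely generated of finite injective dimension. By Bass's conjecture (Peskine--Szpiro--Roberts) such $N\neq 0$ exists only if $R$ is Cohen--Macaulay. So first I want $R$ Cohen--Macaulay. Take $N=E_R(k)$, which has finite injective dimension, and use the vanishing of $\operatorname{Ext}^{>n}_R(R^\infty,E)$ for $E$ injective, which is automatic. So this is not enough, and I would use a flat module instead: $P=\widehat{R}$ or $R^\infty$ itself. Bhatt--Scholze's finiteness gives that $R^\infty$ has finite flat dimension over... no.

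The most robust plan is the one in the proposition. Use the spectral sequence $(\ast)$ with $N=R$, replacing the hypothesis ``$\operatorname{Ext}^{>0}=0$'' by ``$\operatorname{Ext}^{>n}=0$''. The spectral sequence $\operatorname{Ext}^p_{R^\infty}(L,\operatorname{Ext}^q_R(R^\infty,R))\Rightarrow \operatorname{Ext}^{p+q}_R(L,R)$ then has nonzero rows only for $0\le q\le n$ and nonzero columns only for $p\le g=\operatorname{gldim}R^\infty$. With $L=R^\infty/\fm R^\infty=\oplus k$ this gives $\operatorname{Ext}^j_R(k,R)=0$ for $j>g+n$. Hence $\id_R R<\infty$, i.e.\ $R$ is Gorenstein by Roberts' theorem as in the proposition. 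This bounded-strip argument is the actual proof I would write, and Steps 2–3 are unnecessary.

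The main obstacle is justifying Step 1, the vanishing $\operatorname{Ext}^{i}_R(R^\infty,R)=0$ for $i>\Gpd_R(R^\infty)$, for the non-finitely generated module $R^\infty$. For this I would cite Holm's theorem: for $M$ of finite Gorenstein projective dimension, $\Gpd_R M=\sup\{i:\operatorname{Ext}^i_R(M,P)\neq 0,\ P \text{ projective}\}$. The other point to check is the exact form of Bhatt--Scholze's finiteness statement used in the proposition. I take it as given, exactly as the proof above does.
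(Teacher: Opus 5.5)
Your final bounded-strip argument is correct, and it takes a genuinely different route from the paper. The paper does not stop at eventual vanishing. It passes from $\Gpd$ to $\operatorname{Gfd}$ and applies the Chouinard-type formula $\operatorname{Gfd}_R(M)=\sup_{\mathfrak p}\{\operatorname{depth}R_{\mathfrak p}-\operatorname{depth}_{R_{\mathfrak p}}M_{\mathfrak p}\}$, together with $(R^\infty)_{\mathfrak p}\cong(R_{\mathfrak p})^\infty$ and a depth comparison, to get $\operatorname{Gfd}_R(R^\infty)\le 0$. Hence $\Tor^R_i(E_R(k),R^\infty)=0$ for $i>0$. It then uses Matlis duality with $E_R(k)^\vee\cong R$, which is where completeness enters, to obtain the full vanishing $\Ext^i_R(R^\infty,R)=0$ for all $i>0$. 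Finally it applies the criterion ``$R$ Gorenstein iff $\Ext^{>0}_R(R^\infty,R)=0$'' as a black box.

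You instead extract only $\Ext^{i}_R(R^\infty,R)=0$ for $i>n$ from Holm's $\Ext$-characterization of $\Gpd$, which is valid over any ring since $R$ is projective. You then observe that the change-of-rings spectral sequence from the Proposition tolerates finitely many nonzero rows. With $L=R^\infty/\fm R^\infty$, the $E_2$-page lives in $0\le p\le g$, $0\le q\le n$. So $\prod\Ext^j_R(k,R)=\Ext^j_R(L,R)=0$ for $j>g+n$, and $\id_R R<\infty$.

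What each route buys:
\begin{itemize}
\item Your route is shorter. It bypasses the Gorenstein flat machinery, the depth estimate and Matlis duality, and never uses completeness. It actually proves more: for finitely generated $N$, $\Ext^{i}_R(R^\infty,N)=0$ for $i\gg 0$ already forces $\id_R N<\infty$.
\item The paper's route yields the sharper intermediate facts $\operatorname{Gfd}_R(R^\infty)\le 0$ and $\Ext^{>0}_R(R^\infty,R)=0$.
\end{itemize}
Both rest on exactly the same input, namely finiteness of $\pd_{R^\infty}(R^\infty/\fm R^\infty)$ attributed to Bhatt--Scholze. In a write-up, delete Steps~2 and~3; they are abandoned detours, and Step~3 in particular contains claims you retract mid-sentence.
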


\begin{proof}
	Recall that $\operatorname{Gfd}_R(-) \leq \operatorname{Gpd}_R(-)$, and that for every $R$-module $M$ of finite Gorenstein flat dimension, we have the equality
	\[
	\operatorname{Gfd}_R(M) = \sup \left\{ \operatorname{edepth}(R_{\mathfrak p}) - \operatorname{edepth}_{R_{\mathfrak p}}(M_{\mathfrak p}) \mid \mathfrak p \in \operatorname{Spec} R \right\},
	\]where $\operatorname{edepth}$ means $\Ext-depth$.
	Applying this to $M = R^\infty$, and using the fact that $(R^\infty)_{\mathfrak p} \cong (R_{\mathfrak p})^\infty$, we obtain
	\[
	\operatorname{edepth}_{R_{\mathfrak p}}((R^\infty)_{\mathfrak p}) 
	\geq \operatorname{depth}_{R_{\mathfrak p}}((R^\infty)_{\mathfrak p}) 
	\geq \operatorname{depth}_{R_{\mathfrak p}}(R_{\mathfrak p}) 
	= \operatorname{edepth}_{R_{\mathfrak p}}(R_{\mathfrak p}).
	\]
	Hence $\operatorname{Gfd}_R(R^\infty) \leq 0$.
	Moreover, if $\operatorname{Gfd}_R(M)$ is finite, then
	\[
	\operatorname{Gfd}_R(M) = \sup \left\{ i \in \mathbb{Z} \mid \operatorname{Tor}_i^R(E, M) \neq 0 \text{ for some injective } R\text{-module } E \right\}.
	\]
	In particular, $\operatorname{Tor}_i^R(E_R(k), R^\infty) = 0$ for all $i > 0$. Taking the Matlis dual and applying the second isomorphism in Fact \ref{fact:matlis-duality}, we get
	\[
	\operatorname{Ext}_R^i(R^\infty, E_R(k)^\vee) 
	= \operatorname{Ext}_R^i(R^\infty, R) = 0.
	\]
	Then, by Corollary \ref{25}, we conclude that $R$ is Gorenstein.
\end{proof}

	When is $\operatorname{Gid}_R(R^\infty) < \infty$?
A natural answer is precisely in the Gorenstein case. Note that $\operatorname{Gid}_R(R^\infty) \leq \operatorname{id}_R(R^\infty)$, with equality whenever $R^\infty$ has finite injective dimension over $R$ (see \cite[Proposition 3.10]{Totally}). The latter property has been fully investigated   in \cite{Ryo}.
 
 \begin{remark}
Suppose $R$ is F-splitting. Then $\operatorname{Gid}_R(R^\infty) < \infty$ if and only if $R$ is Gorenstein. 
 \end{remark}

\begin{proof}Suppose $\operatorname{Gid}_R(R^\infty) < \infty$.
Let $E$ be an injective $R$-module. There is an embedding $\Ext^j_R(E,R)\subseteq   \Ext^j_R(E,R^\infty)=0$ for all $j\gg 0$ (see \cite[Corollary 3.7]{Totally}). Then, $\Ext^j_R(E,R)=0$ for all $j\gg 0$, and so $\operatorname{Gid}_R(R) < \infty$. Since $R$ is projective this implies that $\operatorname{Gid}_R(R)=\id_R(R)$ (see \cite[Proposition 3.11]{Totally}), hence
 $R$ is Gorenstein.
\end{proof}

Against to $\Tor(E,-)$, the functor $\Ext(E,-)$ is not behaved with   respect to the flat base change. This is the main difficulty in working with $\operatorname{Gid}_R(R^\infty)$.
In the next sections we present a systematic study of Gorenstein injective dimension of $R^\infty$.

	\maketitle
	
\section{Cohen's factorization for perfect closure}

Let $k$ be a perfect field, and let $(R,\mathfrak m,k)$ be a Noetherian complete  local domain of
characteristic $p>0$. For each $e\geq 0$, consider the local homomorphism
$
\varphi_e:R\to R^{1/p^e}.
$
Choose a Cohen factorization
\(
R\xrightarrow{\dot\varphi_e}Q_e
\stackrel{\varphi'_e}\twoheadrightarrow\widehat{R^{1/p^e}}={R^{1/p^e}}.\)
Thus $Q_e$ is a complete Noetherian local ring,
$\dot\varphi_e$ is flat, and the closed fiber
$
Q_e/\mathfrak mQ_e
$
is regular.
By the weak functoriality of Cohen's factorizations, the
factorizations may, after suitable choices, be made compatible with
the maps
$
R^{1/p^e}\to R^{1/p^{e+1}}.
$
In particular, we obtain a directed system
\[
Q_0\longrightarrow Q_1\longrightarrow Q_2
\longrightarrow\cdots.
\]
We emphasize that the transition maps in above are not asserted to
be flat, but see {Lemma} \ref {cf} below.  The existence of compatible maps
between suitable Cohen factorizations is the weak functoriality
property established by Nasseh and Sather-Wagstaff \cite{sa}.

\begin{lemma}
	With notation as above, the canonical homomorphism
	$
	R\to  Q=\varinjlim_eQ_e
	$
	is flat.
\end{lemma}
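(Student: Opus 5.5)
The plan is to deduce flatness of the colimit from flatness of each piece. Tensor products commute with filtered colimits, and a filtered colimit of exact sequences is exact, so a direct limit of flat $R$-modules is flat. In particular the transition maps $Q_e\to Q_{e+1}$ do not need to be flat; only the structure maps from $R$ matter.

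The first step is to check that the directed system $Q_0\to Q_1\to\cdots$ is a system of $R$-algebras. This means the compatibility coming from weak functoriality \cite{sa} must give commutative triangles
\[
\dot\varphi_{e+1}=\psi_e\circ\dot\varphi_e ,
\]
where $\psi_e:Q_e\to Q_{e+1}$ is the transition map. This is exactly what a comparison of Cohen factorizations of the two composites $R\to R^{1/p^e}\to R^{1/p^{e+1}}$ provides. The comparison map is built over $R$, lifting the given map on the targets. Granting this, each $Q_e$ is an $R$-module via $\dot\varphi_e$, the $\psi_e$ are $R$-linear, and $Q=\varinjlim_e Q_e$ is an $R$-algebra whose structure map $R\to Q$ is the colimit of the $\dot\varphi_e$.

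The second step is to compute $\Tor$. For any $R$-module $M$, since $\Tor$ commutes with direct limits in the second variable (a filtered colimit of free resolutions, or simply exactness of $\varinjlim$),
\[
\Tor_i^R(M,Q)\cong\varinjlim_e \Tor_i^R(M,Q_e)=0\quad\text{for } i>0,
\]
because each $\dot\varphi_e$ is flat by the definition of a Cohen factorization. Hence $Q$ is flat over $R$.

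The only point I expect to need care is the first step: the triangles must commute on the nose, not merely up to homotopy or up to an automorphism of $Q_{e+1}$. If weak functoriality only produces compatibility up to such choices, I would fix the factorizations inductively. At stage $e+1$, I would choose the comparison map $Q_e\to Q_{e+1}$ first as an $R$-algebra map, which the Nasseh–Sather-Wagstaff construction allows since it starts from a map of $R$-algebras. Then no further compatibility is needed for the flatness claim.
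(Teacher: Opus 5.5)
Your proposal is correct and follows essentially the paper's argument. Both reduce to the fact that a filtered colimit of flat $R$-modules is flat: the paper tensors a short exact sequence with each $Q_e$ and passes to the limit, while you commute $\Tor$ with the direct limit. Your explicit check that the transition maps are $R$-algebra maps, so that $R\to Q$ really is the colimit of the $\dot\varphi_e$, is a point the paper leaves implicit in its appeal to weak functoriality.
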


\begin{proof}
	Let
	$
	\zeta:=	0\to M'\to M\to M''\to0
	$
	be an exact sequence of $R$-modules. Since $R\to Q_e$ is flat for
	every $e$, tensoring with $Q_e$ gives an exact sequence
	$
	\zeta\otimes_RQ_e
	.
	$
	Hence
	$
	\varinjlim_e(\zeta\otimes_RQ_e)	
	$
	is exact. Since tensor product commutes with filtered colimits,
	$
	\varinjlim_e(M\otimes_RQ_e)
	\cong
	M\otimes_R\left(\varinjlim_eQ_e\right)
	=M\otimes_RQ,
	$
	and similarly for $M'$ and $M''$. Consequently,
	\(
	\zeta\otimes_RQ
	\)
	is exact. 
\end{proof}

The ring $Q$ is generally non-Noetherian. Nevertheless, it is built
from Noetherian local rings, and this allows one to approximate
finitely presented $Q$-modules by modules over the $Q_e$.
We record \cite{G1} as a standard reference for coherent rings.
\begin{lemma}
	Assume that $Q$ is coherent. Let $N$ be a cyclic finitely presented
	$Q$-module. Then, for some $e$, there exists a cyclic finitely
	presented $Q_e$-module $N_e$ such that
	$
	N\cong N_e\otimes_{Q_e}Q.
	$
\end{lemma}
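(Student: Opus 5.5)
The argument is the standard descent of finitely presented modules along a filtered colimit of rings. Coherence of $Q$ is not really needed; only finite presentation of $N$ matters. Since $N$ is cyclic and finitely presented, $N\cong Q/I$ with $I$ a finitely generated ideal of $Q$: for a cyclic module, the kernel of any surjection $Q\to N$ is finitely generated because $N$ is finitely presented (Schanuel's lemma). Write $I=(a_1,\dots,a_n)Q$ with $a_j\in Q$.

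Because $Q=\varinjlim_e Q_e$ is a filtered colimit, every element of $Q$ is the image of an element of some $Q_e$. Only finitely many elements are involved, so there is a single index $e$ and elements $b_1,\dots,b_n\in Q_e$ with $b_j\mapsto a_j$ under the canonical map $\psi_e:Q_e\to Q$. Set $I_e:=(b_1,\dots,b_n)Q_e$ and $N_e:=Q_e/I_e$. This $N_e$ is cyclic, and it is finitely presented because $Q_e$ is Noetherian; in any case $I_e$ is finitely generated by construction.

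Right exactness of $-\otimes_{Q_e}Q$ applied to $Q_e^n\xrightarrow{(b_j)}Q_e\to N_e\to 0$ gives an exact sequence $Q^n\xrightarrow{(\psi_e(b_j))}Q\to N_e\otimes_{Q_e}Q\to 0$. Hence $N_e\otimes_{Q_e}Q\cong Q/I_eQ=Q/(a_1,\dots,a_n)Q=Q/I\cong N$.

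The only point needing care is the first step: checking that the kernel of the presentation $Q\to N$ is finitely generated. It is also worth noting that the transition maps in $\{Q_e\}$ need not be flat or injective. That causes no problem, because we only lift generators and never need relations among them to lift. If a finite free presentation $Q^m\to Q^n\to N\to 0$ were used instead, one would likewise lift the finitely many matrix entries to a common $Q_e$.
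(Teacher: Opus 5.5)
Your proof is correct and follows essentially the same argument as the paper: write $N\cong Q/I$ with $I$ finitely generated, lift the finitely many generators of $I$ to a single $Q_e$, and use right exactness of $-\otimes_{Q_e}Q$. Your remark that coherence is superfluous is also right. The paper invokes coherence of $Q$ to get $I$ finitely generated, but finite presentation of $N$ alone already gives this.
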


\begin{proof}
	Since $N$ is cyclic, write
	$
	N\cong Q/I
	$
	for an ideal $I$ of $Q$. Since $N$ is finitely presented and $Q$ is
	coherent, the ideal $I$ is finitely generated. Thus
	$
	I=(x_1,\ldots,x_r).
	$
	As
	$
	Q=\varinjlim_eQ_e,
	$
	each $x_i$ is represented by an element of $Q_e$ for all sufficiently
	large $e$. After increasing $e$, choose elements
	$
	x_{1,e},\ldots,x_{r,e}\in Q_e
	$
	whose images in $Q$ are $x_1,\ldots,x_r$. Put
	$
	I_e=(x_{1,e},\ldots,x_{r,e})\subseteq Q_e
	$
	and
	$
	N_e=Q_e/I_e.
	$
	Then
	$
	I_eQ=I,
	$
	and hence
	$
	N_e\otimes_{Q_e}Q
	\cong Q/I_eQ
	\cong Q/I
	=N.
	$
	Since $Q_e$ is Noetherian, $N_e$ is finitely presented. It is
	obviously cyclic.
\end{proof}

Set
\(
A=\bigcup_{e\geq 0}R^{1/p^e}.
\) We summarize the strategy in the following diagram:
\[
\begin{CD}
R @>>> Q=\varinjlim_eQ_e @> 
\twoheadrightarrow	>> A\\
@. @AAA @AAA\\
@. Q_e @>\twoheadrightarrow>> R^{1/p^e}
\end{CD}
\]
with
$
R\to Q_e$ {flat with regular closed fiber},
and
$
R\to Q
$  flat.
Assume that $B\subseteq R$ is a purely inseparable extension of
Noetherian domains of characteristic $p>0$, and that $B$ is regular.
The first observation is that $R$ and $B$ have the same perfect
closure.

\begin{fact}
	If $B\subseteq R$ is purely inseparable, then, inside a common
	algebraic closure of their fraction fields,
	$
	R^\infty=B^\infty.
	$ Indeed, since $B\subseteq R$, we have
	$
	B^\infty\subseteq R^\infty.
	$
	Conversely, let $x\in R$. Since $R/B$ is purely inseparable, there
	exists $e\geq 0$ such that
	$
	x^{p^e}\in B.
	$
	Hence
	$
	x\in B^{1/p^e}\subseteq B^\infty.
	$
	Thus $R\subseteq B^\infty$, and therefore
	$
	R^\infty\subseteq B^\infty.
	$
	The two inclusions give the assertion.
\end{fact}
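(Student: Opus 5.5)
The Fact asserts $R^\infty=B^\infty$ for a purely inseparable extension $B\subseteq R$ of domains. The plan is to prove the two inclusions separately, working inside a fixed algebraic closure $\overline{K}$ of the fraction field of $R$. Since $R$ is algebraic over $B$, this $\overline{K}$ is also an algebraic closure of $\operatorname{Frac}(B)$. Inside $\overline{K}$ we identify the perfect closure of a domain $D$ of characteristic $p$ with $\bigcup_{e\ge0}D^{1/p^e}$. This identification holds because for a reduced ring the transition maps in $\varinjlim({}^eD)$ are injective, and $D^{1/p^e}$ is the image of ${}^eD$ under the $p^e$-th root map in $\overline{K}$, which is unique because Frobenius is injective on a field. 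Having both perfect closures realized as subrings of the same $\overline{K}$ is what makes the statement meaningful.

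For $B^\infty\subseteq R^\infty$, the inclusion $B\subseteq R$ gives $B^{1/p^e}\subseteq R^{1/p^e}$ for every $e$: if $y^{p^e}\in B$ then $y^{p^e}\in R$. Taking the union over $e$ yields the inclusion.

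For $R^\infty\subseteq B^\infty$, let $z\in R^\infty$, so $z^{p^e}=x\in R$ for some $e$. By pure inseparability there is $f$ with $x^{p^f}\in B$. Hence $z^{p^{e+f}}\in B$, so $z\in B^{1/p^{e+f}}\subseteq B^\infty$. Equivalently, one first shows $R\subseteq B^\infty$ and then notes that $B^\infty$ is perfect: it is closed under $p$-th roots in $\overline{K}$, since $y\in B^{1/p^e}$ implies $y^{1/p}\in B^{1/p^{e+1}}$. It therefore contains all $p$-power roots of elements of $R$.

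No step is a genuine obstacle. The only point needing care is the convention that "purely inseparable" means every element of $R$ has some $p$-power lying in $B$, as opposed to a statement about fraction fields only. This is exactly what the argument uses, so I would state that convention explicitly.
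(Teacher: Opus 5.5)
Your proposal is correct and is essentially the paper's argument: the same two inclusions, with $R^\infty\subseteq B^\infty$ obtained from the fact that every $x\in R$ has a $p$-power in $B$. You also make explicit two points the paper leaves implicit. First, that $B^\infty$ is closed under $p$-th roots, which is needed to pass from $R\subseteq B^\infty$ to $R^\infty\subseteq B^\infty$. Second, that ``purely inseparable'' must be read elementwise for the ring extension and not only for the fraction fields; with only the fraction-field condition the statement fails, e.g.\ for $B=k[t]\subseteq R=k[t,t^{-1}]$.
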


Thus, in the purely inseparable situation, we may write
$
A=R^\infty=B^\infty.
$
For each $e\geq0$, consider
$
C_e:=R\otimes_B B^{1/p^e}.
$
There are canonical maps
$
C_e\to C_{e+1}
$
induced by
$
B^{1/p^e}\to B^{1/p^{e+1}}.
$ The regularity of $B$ now becomes crucial.

\begin{lemma}\label{cf}
	If $B$ is regular, then the homomorphism
	\(
	B^{1/p^e}\to B^{1/p^{e+1}}
	\)
	is flat for every $e\geq0$. Consequently,
	\(
	C_e\to C_{e+1}
	\)
	is flat. In particular, we get a Cohen factorizations with coherent rings $\{Q,A\}$:
	\[
	R\stackrel{flat}\longrightarrow Q\twoheadrightarrow A:={R^{\infty}},
	\]
\end{lemma}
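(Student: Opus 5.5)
The plan has three parts: flatness of $B^{1/p^e}\to B^{1/p^{e+1}}$, transfer of that flatness to $C_e\to C_{e+1}$, and then assembly of the factorization.

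\emph{Flatness of the Frobenius transitions.} The map $B^{1/p^e}\to B^{1/p^{e+1}}$ is identified with the Frobenius $F\colon B^{1/p^e}\to B^{1/p^e}$ via the ring isomorphism $B\cong B^{1/p^e}$, $b\mapsto b^{1/p^e}$. This isomorphism carries $B$ regular to $B^{1/p^e}$ regular. By Kunz's theorem, Frobenius on a regular Noetherian ring of characteristic $p$ is flat, so the transition map is flat. Kunz applies locally; flatness is a local property, so localizing at primes is harmless.

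\emph{Flatness of $C_e\to C_{e+1}$.} Since $C_{e+1}=R\otimes_B B^{1/p^{e+1}}$, we can write
\[
C_{e+1}\cong C_e\otimes_{B^{1/p^e}}B^{1/p^{e+1}}.
\]
Thus $C_e\to C_{e+1}$ is the base change of the flat map $B^{1/p^e}\to B^{1/p^{e+1}}$ along $B^{1/p^e}\to C_e$, and base change preserves flatness.

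\emph{Assembling the Cohen factorization.} Put $Q:=\varinjlim_e C_e=R\otimes_B B^\infty$, using that tensor products commute with colimits.
\begin{enumerate}
\item[(a)] $R\to Q$ should be flat. This needs $R\to C_e$ flat, i.e.\ flatness of $B\to B^{1/p^e}$ (Kunz again, for the composite Frobenius), followed by base change and then the direct limit argument of the first lemma of this section.
\item[(b)] The multiplication map $R\otimes_B B^\infty\to B^\infty=A$ is surjective, since $B^\infty$ already lies in the image. Here I use the Fact that $R^\infty=B^\infty$.
\item[(c)] Coherence of $A$: $A$ is a perfect ring, and I would invoke the Bhatt--Scholze finiteness quoted earlier, or a direct argument, to deduce coherence.
\item[(d)] Coherence of $Q$: $Q$ is a filtered colimit of Noetherian rings $C_e$ with flat transition maps. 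These are Noetherian because $B$ is $F$-finite-like in the complete setting and $R$ is finite over $B$. The standard criterion (Glaz \cite{G1}) then says that a flat directed union of coherent (here Noetherian) rings is coherent.
\end{enumerate}

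\emph{Main obstacle.} I expect the main difficulty to be in the hypotheses behind (d): one has to ensure each $C_e$ is Noetherian, which needs finiteness of $B\to B^{1/p^e}$ or at least of $R$ over $B$. One also has to verify that $R\to Q$ is flat rather than only $C_e\to C_{e+1}$. The first thing I would try is to reduce, using completeness and the perfect residue field, to $B$ being $F$-finite, so that both the Noetherianity and the flatness follow from Kunz.
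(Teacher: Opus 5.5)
Your first two steps are exactly the paper's proof. You identify $B^{1/p^e}\to B^{1/p^{e+1}}$ with the Frobenius of a ring isomorphic to $B$ and invoke Kunz, then write $C_{e+1}\cong C_e\otimes_{B^{1/p^e}}B^{1/p^{e+1}}$ and use that base change preserves flatness. The paper stops there and gives no argument for the closing ``in particular'' clause, so your items (a)--(d) go beyond it. Items (a) and (b) are correct: $R\to C_e$ is a base change of the flat map $B\to B^{1/p^e}$, flatness passes to the limit, and the image of $R\otimes_B B^\infty\to R^\infty=B^\infty$ contains $B^\infty$.

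\textbf{Item (c).} As you justify it, this step would fail. Perfectness does not give coherence, and neither does the Bhatt--Scholze finiteness. That finiteness, as quoted in the paper, applies to the perfection of every Noetherian local ring of characteristic $p$, yet non-$F$-coherent rings exist (Remark~\ref{46}). The correct reason is the first assertion of the lemma itself. Here $A=B^\infty=\varinjlim_e B^{1/p^e}$ is a direct limit of the Noetherian rings $B^{1/p^e}\cong B$ along flat transition maps. So Glaz's criterion \cite[Theorem 2.3.3]{G1}, the same one you use in (d), gives coherence of $A$ with no extra hypothesis.

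\textbf{Item (d).} Here Noetherianity of $C_e=R\otimes_B B^{1/p^e}$ is asserted rather than proved. Pure inseparability only makes $R$ integral over $B$, not module-finite, and ``$F$-finite-like'' is not an argument. You must either prove it in this setting or add a hypothesis: $B$ $F$-finite (then $C_e$ is module-finite over $R$), or $R$ module-finite over $B$ (then $C_e$ is module-finite over $B^{1/p^e}$). The paper does not address this point either.

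\textbf{What you actually build.} You obtain only a flat-then-surjective factorization, not a Cohen factorization in the technical sense. For $e\geq 1$ and $\dim R>0$, the closed fibre $k\otimes_B B^{1/p^e}$ of $R\to C_e$ is non-reduced. Also, nothing identifies your $Q=R\otimes_B B^\infty$ with the ring $\varinjlim_e Q_e$ constructed earlier in the section.
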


\begin{proof}
	The map
	\(
	B^{1/p^e}\to B^{1/p^{e+1}}
	\)
	is obtained from the Frobenius homomorphism on $B^{1/p^e}$.
	Since $B$ is regular, every iterate of Frobenius is flat by Kunz's
	theorem. Hence
	\(
	B^{1/p^e}\to B^{1/p^{e+1}}
	\)
	is flat.
	Moreover,
	\[
	C_{e+1}
	=
	R\otimes_BB^{1/p^{e+1}}
	\cong
	\left(R\otimes_BB^{1/p^e}\right)
	\otimes_{B^{1/p^e}}B^{1/p^{e+1}}
	=
	C_e\otimes_{B^{1/p^e}}B^{1/p^{e+1}}.
	\]
	Thus $C_e\to C_{e+1}$ is a base change of the flat map
	\(
	B^{1/p^e}\to B^{1/p^{e+1}},
	\)
	and hence is flat.
\end{proof}

\begin{question}\label{25}
	If
	\(
	\operatorname{Gid}_Q(Q/I)<\infty ,
	\)
	then is $\id_Q(Q)<\infty$?	\end{question}
The point is that $Q$ is not noetherian (see \cite[Theorem 3.22]{Totally}).	Since $R\to Q$ is flat, positive answer to {Question} \ref{25} implies $R$  is Gorenstein.
\maketitle

\section{Gorenstein injective dimension detected by perfect closure}

Shimomoto \cite{Shi10} defines \(R\) to be \emph{F-coherent} if the perfect closure \(R^\infty\) is coherent.

\begin{theorem}\label{3.1}
	Suppose \((R,\mathfrak m)\) is an \(F\)-coherent ring. If
	\(
	\operatorname{Gid}_R(R^\infty) < \infty,
	\)
	then \(R\) is Gorenstein.
\end{theorem}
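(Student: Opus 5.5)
The plan is to reduce to Corollary \ref{25}, i.e.\ to show $\operatorname{Ext}^i_R(R^\infty,R)=0$ for all $i>0$. The reduction uses Matlis duality together with a Tor-vanishing for injective modules coming from the Gorenstein injective dimension, much as in the proof of the Gorenstein projective corollary. Since the conclusion and the invariants behave well under completion ($\widehat{R}^\infty$ versus $R^\infty\otimes_R\widehat R$, and Gorensteinness descends along $R\to\widehat R$), I would first pass to the case $R$ complete. I would then interpret $F$-coherence as giving that $R^\infty$ is a coherent ring of finite global dimension by Bhatt--Scholze, so that finitely presented $R^\infty$-modules have finite projective dimension over $R^\infty$.

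\textbf{Step 1: extract Ext-vanishing against injectives.} By the theory of Gorenstein injective dimension \cite{Totally}, $\operatorname{Gid}_R(R^\infty)=n<\infty$ implies $\operatorname{Ext}^j_R(E,R^\infty)=0$ for every injective $R$-module $E$ and all $j>n$. Take $E=E_R(k)$. Then $\operatorname{Ext}^j_R(E,R^\infty)=0$ for $j\gg0$. I would compute this via the change of rings spectral sequence used in the proof of the Proposition:
\[
\operatorname{Ext}^p_{R^\infty}\bigl(\operatorname{Tor}^R_q(R^\infty,E),R^\infty\bigr)\Longrightarrow \operatorname{Ext}^{p+q}_R(E,R^\infty).
\]
This requires knowing $\operatorname{Tor}^R_q(R^\infty,E)$, which brings in local cohomology of $R^\infty$.

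\textbf{Step 2: use coherence to descend.} The alternative route, which I would actually try first, is to mimic the F-split Remark. The point is that we need $\operatorname{Ext}^j_R(E,R)$ to embed into, or be detected by, $\operatorname{Ext}^j_R(E,R^\infty)$, and in the F-split case this came from $R$ being a direct summand of $R^\infty$. Without splitting, I would use coherence as follows. The quotient $R^\infty/R$ is a filtered union of the finitely generated modules $R^{1/q}/R$. Over the coherent ring $R^\infty$, the cokernel of $R\otimes_R R^\infty\to R^\infty\otimes_R R^\infty$ is suitably controlled, which yields a long exact sequence
\[
\cdots\to\operatorname{Ext}^{j}_R(E,R)\to\operatorname{Ext}^j_R(E,R^\infty)\to\operatorname{Ext}^j_R(E,R^\infty/R)\to\cdots.
\]
I would aim to show that $\operatorname{Gid}_R(R^\infty/R)<\infty$ as well. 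That follows if $R^\infty/R$ has a finite resolution by modules that are themselves Gorenstein injective or of finite Gid. For this I would exploit that $R^\infty/R\cong \varinjlim R^{1/q}/R$, and that over a complete ring Gid is compatible with such colimits whose terms are finitely presented over the coherent $R^\infty$.

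\textbf{Step 3: conclude.} Once $\operatorname{Ext}^j_R(E,R)=0$ for $j\gg0$ and all injective $E$, we get $\operatorname{Gid}_R(R)<\infty$. Since $R$ is projective, \cite[Proposition 3.11]{Totally} gives $\operatorname{Gid}_R(R)=\operatorname{id}_R(R)$, so $\operatorname{id}_R(R)<\infty$ and $R$ is Gorenstein. The main obstacle is Step 2: transferring finiteness of $\operatorname{Gid}$ from $R^\infty$ to $R$ without a splitting. This is exactly where $F$-coherence must be used, presumably to show that $R^\infty/R$, or the relevant Tor terms, behave like modules of finite flat dimension over $R$. If that fails, the fallback is to realise $R\to R^\infty$ through the coherent Cohen factorization of Lemma \ref{cf} and argue there.
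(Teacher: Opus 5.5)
There is a genuine gap. Your Step 2, which you yourself flag as the main obstacle, carries the entire content of the theorem and is not established. Given $\operatorname{Gid}_R(R^\infty)<\infty$, apply the two-out-of-three property for finite Gorenstein injective dimension to $0\to R\to R^\infty\to R^\infty/R\to 0$. This shows that $\operatorname{Gid}_R(R^\infty/R)<\infty$ is \emph{equivalent} to $\operatorname{Gid}_R(R)<\infty$, i.e.\ to $R$ being Gorenstein. So the target of Step 2 restates the conclusion; it does not reduce it.

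The mechanism you suggest also does not apply. The modules $R^{1/q}/R$ and $R^\infty/R$ are not $R^\infty$-modules, so coherence of $R^\infty$ and finite presentation over $R^\infty$ say nothing about them. A colimit argument would in any case need a uniform bound on $\operatorname{Gid}_R(R^{1/q}/R)$, which is the same problem at each finite Frobenius level. The long exact sequence you display comes from the short exact sequence alone and uses no coherence.

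Step 1 stalls for a structural reason. The Matlis-duality trick in the Gorenstein projective corollary works because finite $\operatorname{Gfd}$ gives vanishing of $\operatorname{Tor}^R_i(E,R^\infty)$, which dualizes to $\operatorname{Ext}^i_R(R^\infty,R)=0$. Finite $\operatorname{Gid}$ only gives vanishing of $\operatorname{Ext}^i_R(E,R^\infty)$, and no comparable duality is available, since the first isomorphism in the Matlis-duality fact needs a finitely generated first argument. Finally, the preliminary passage to complete $R$ is unjustified: you give no argument that $F$-coherence, or finiteness of $\operatorname{Gid}$ of the non-finitely generated module $R^\infty$, passes to $\widehat R$. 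It is also unnecessary.

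The missing idea is to transfer finiteness of $\operatorname{Gid}$ from $R^\infty$ not to $R$ but to the residue field $k$.
\begin{itemize}
\item Since $R^\infty$ is coherent, $\mathfrak m R^\infty$ is finitely generated, so $R^\infty/\mathfrak m R^\infty$ is finitely presented over $R^\infty$.
\item Together with finiteness of the global dimension of $R^\infty$, this yields a finite resolution
\[
0\to (R^\infty)^{n_t}\to\cdots\to (R^\infty)^{n_0}\to R^\infty/\mathfrak m R^\infty\to 0
\]
by finitely generated free modules. It is in particular an exact sequence of $R$-modules.
\item Each term is a finite direct sum of copies of $R^\infty$, hence has finite Gorenstein injective dimension over $R$.
\item Applying two-out-of-three along the short exact sequences of syzygies gives $\operatorname{Gid}_R(R^\infty/\mathfrak m R^\infty)<\infty$.
\item This module is a nonzero $k$-vector space, so $k$ is a direct summand of it. Hence $\operatorname{Gid}_R(k)<\infty$, which forces $R$ to be Gorenstein.
\end{itemize}
This is exactly where $F$-coherence is used: it keeps the free modules in the resolution finitely generated, so only finite direct sums of copies of $R^\infty$ occur. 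In the complete case the paper instead uses closure of Gorenstein injective modules under direct limits, which allows arbitrary free modules.
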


\begin{proof}
	Recall that any finitely generated module over a coherent ring has a resolution by finitely generated free modules, and in fact over \(R^\infty\) such a resolution is of finite length (see \cite{A}). In particular, we have an \(R^\infty\)-resolution:
	\[
	0 \longrightarrow (R^\infty)^{n_t} \longrightarrow \cdots \longrightarrow (R^\infty)^{n_0} \longrightarrow R^\infty / \mathfrak m R^\infty \longrightarrow 0,
	\]
	where the \(n_i\) are integers. Since \(R \subseteq R^\infty\), the above exact sequence is indeed \(R\)-linear.
	
	We use the following two properties of Gorenstein injective modules (see \cite[Proposition 3.9, Proposition 3.3]{Totally}):
	
	\begin{enumerate}
		\item Finite direct sums of Gorenstein injective modules are Gorenstein injective.
		\item If
		\(
		0 \longrightarrow M_1 \longrightarrow M_2 \longrightarrow M_3 \longrightarrow 0
		\)
		is a short exact sequence and any two of the modules have finite Gorenstein dimension, then so does the third.
	\end{enumerate}
	
	Property (1) implies that any finite direct sum of modules of finite injective dimension has finite Gorenstein injective dimension.
	
	Since \(R^\infty\) has finite Gorenstein injective dimension, the above resolution implies that
	\(
	R^\infty / \mathfrak m R^\infty
	\)
	has finite Gorenstein dimension. Now \(R / \mathfrak m\) is a direct summand of
	\(
	R^\infty / \mathfrak m R^\infty.
	\)
	It follows from \cite[Proposition 3.3]{Totally} that
	\[
	\operatorname{Gid}_R(R / \mathfrak m) < \infty.
	\]
	Therefore, \(R\) is Gorenstein (see \cite[Corollary 3.23]{Totally}).
\end{proof}

The following is the main result of this section.

\begin{theorem}\label{4.2}
	Suppose \((R,\mathfrak m)\) is a complete local ring. Then
	\(
	\operatorname{Gid}_R(R^\infty) < \infty
	\)
	if and only if  \(R\) is Gorenstein.
\end{theorem}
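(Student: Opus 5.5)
The plan is to prove the two implications separately; the backward one is formal. If $R$ is Gorenstein, then every $R$-module $M$ satisfies $\operatorname{Gid}_R(M)\le \dim R$, since over a Gorenstein local ring of finite Krull dimension every module has finite Gorenstein injective dimension bounded by the Krull dimension (\cite[Theorem 3.24]{Totally}). Applying this to $M=R^\infty$ gives $\operatorname{Gid}_R(R^\infty)<\infty$.

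For the forward implication I would try to reduce to Corollary \ref{25}, or to $\operatorname{Gid}_R(k)<\infty$ as in Theorem \ref{3.1}, replacing the coherence used there by completeness. Since $R$ is complete, it has a dualizing complex $D$. By the Christensen--Frankild--Holm characterization, $\operatorname{Gid}_R(R^\infty)<\infty$ means that $R^\infty$ lies in the Bass class $\mathcal B(R)$. Concretely:
\begin{enumerate}
\item the complex $C:=\mathbf R\operatorname{Hom}_R(D,R^\infty)$ is homologically bounded;
\item the evaluation map $D\otimes^{\mathbf L}_R C\to R^\infty$ is an isomorphism.
\end{enumerate}
Equivalently, by Matlis duality (Fact \ref{fact:matlis-duality}) and the Holm duality $\operatorname{Gid}_R(M)<\infty \Leftrightarrow \operatorname{Gfd}_R(M^\vee)<\infty$, valid over rings with a dualizing complex, the module $(R^\infty)^\vee$ has finite Gorenstein flat dimension.

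The key steps are then as follows. First, I would use that $C$ is naturally a complex of $R^\infty$-modules, via the $R^\infty$-action on the second argument. Second, I would invoke the Bhatt--Scholze bound $g=\operatorname{gldim}(R^\infty)<\infty$ in the spectral-sequence style of the proof of the Proposition in Section 2, taking $L=R^\infty/\mathfrak m R^\infty\cong\bigoplus k$ as before. By adjunction,
\[
\mathbf R\operatorname{Hom}_{R^\infty}(L,C)\;\simeq\;\mathbf R\operatorname{Hom}_R(L\otimes^{\mathbf L}_{R^\infty}\!\!\!\!\quad \,(D\otimes^{\mathbf L}_R R^\infty),R^\infty)\quad\text{resp.}\quad \mathbf R\operatorname{Hom}_R(D\otimes^{\mathbf L}_R L, R^\infty).
\]
The left-hand side is bounded in degrees $\le g+\sup C$, because $C$ is bounded and $R^\infty$ has finite global dimension. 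Third, I would use the isomorphism $D\otimes^{\mathbf L}_R C\simeq R^\infty$ to transport this boundedness back. The goal is to obtain $\operatorname{Ext}^{i}_R(k,D\otimes^{\mathbf L}_R C')=0$ for $i\gg0$ for a nonzero bounded complex $C'$ built from $C$ with finitely generated homology. Since $R$ is local with dualizing complex $D$, this would force $D$ to have finite flat dimension, and hence $R$ to be Gorenstein. A more direct alternative at this stage is to show that $\operatorname{Ext}^i_R(R^\infty,R)=0$ for all $i>0$, by Matlis-dualizing the vanishing $\operatorname{Tor}^R_{i}(E_R(k),(R^\infty)^\vee)=0$ for large $i$ that comes from the finite Gorenstein flat dimension, and then to conclude by Corollary \ref{25}.

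I expect the main obstacle to be exactly the point flagged after the Remark: $\operatorname{Ext}_R(E,-)$ and $\mathbf R\operatorname{Hom}_R(D,-)$ do not commute with the colimit $R^\infty=\varinjlim {}^eR$ or with flat base change. So the boundedness of $C$ must be converted into information about a finitely generated test object without splitting $R\to R^\infty$. This splitting is precisely what the F-split Remark and the coherence hypothesis of Theorem \ref{3.1} supplied. My first attempt would be to use completeness only through the existence of $D$, together with the finite global dimension of $R^\infty$ applied to the $R^\infty$-complex $C$ and to $L=R^\infty/\mathfrak mR^\infty$. The aim is to extract a bound $\mu^i(\mathfrak m,D)=0$ for $i\gg0$ and then conclude with Roberts' theorem, as in the proof of the Proposition in Section 2.
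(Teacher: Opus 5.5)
\textbf{Verdict: the forward implication has a genuine gap.} Your backward implication is fine. For the forward one you give a menu of attempts, and the decisive step is never constructed: you say you will ``transport this boundedness back'' to an unspecified bounded complex $C'$ with finitely generated homology, and then conclude that $D$ has finite flat dimension.

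\textbf{Why the sketched routes do not work.}
Your adjunction should read $\mathbf R\operatorname{Hom}_{R^\infty}(L,C)\simeq\mathbf R\operatorname{Hom}_{R^\infty}(D\otimes^{\mathbf L}_R L,R^\infty)$; the outer Hom is over $R^\infty$, not $R$. Even after this correction, boundedness of this complex says nothing about $\mu^i(\fm,D)$. You would need extra nonvanishing information about $\Ext_{R^\infty}(L,R^\infty)$, and you do not supply it.

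The ``more direct alternative'' fails outright, for two reasons:
\begin{enumerate}
\item Matlis-dualizing $\Tor^R_i(E_R(k),(R^\infty)^\vee)$ produces $\Ext^i_R(E_R(k),(R^\infty)^{\vee\vee})$ or $\Ext^i_R((R^\infty)^\vee,\widehat R)$, not $\Ext^i_R(R^\infty,R)$.
\item The vanishing only holds for $i\gg0$, whereas the Gorenstein criterion of Section 2 needs it for all $i>0$.
\end{enumerate}
This is exactly why the Matlis-duality trick works for $\Gpd_R(R^\infty)$, where $R^\infty$ itself sits inside the $\Tor$, but not for $\Gid$.

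\textbf{What actually has to be replaced.} Your first instinct, to reduce to $\Gid_R(k)<\infty$ as in Theorem~\ref{3.1}, was the right one, but you misidentify the obstacle. Theorem~\ref{3.1} never splits $R\to R^\infty$. The only splitting it uses is that $k$ is an $R$-direct summand of $R^\infty/\fm R^\infty$, which is automatic because the latter is a nonzero $k$-vector space. Coherence served only to make the free modules in a finite $R^\infty$-resolution of $R^\infty/\fm R^\infty$ finitely generated, i.e.\ finite sums of copies of $R^\infty$.

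\textbf{The missing idea, which the paper uses.} Completeness removes the need for finite generation. Over a ring with a dualizing complex, modules of finite Gorenstein injective dimension are closed under arbitrary direct sums, and even direct limits. In your own language this is immediate, since $D$ has finitely generated homology: $\mathbf R\operatorname{Hom}_R(D,\bigoplus_X R^\infty)\simeq\bigoplus_X C$ is bounded, and the evaluation map is a direct sum of isomorphisms. Hence $\bigoplus_X R^\infty\in\mathcal B(R)$.

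The argument then runs as follows:
\begin{enumerate}
\item Since $\operatorname{gldim}R^\infty<\infty$, take a resolution $0\to F_t\to\cdots\to F_0\to R^\infty/\fm R^\infty\to 0$ with $F_j=\bigoplus_{X_j}R^\infty$.
\item Apply the two-out-of-three property of finite $\Gid$ (thickness of $\mathcal B(R)$) along the short exact sequences to get $\Gid_R(R^\infty/\fm R^\infty)<\infty$.
\item Pass to the direct summand $k$, obtaining $\Gid_R(k)<\infty$, which forces $R$ to be Gorenstein.
\end{enumerate}
No commutation of $\Ext_R(E,-)$ or $\mathbf R\operatorname{Hom}_R(D,-)$ with $\varinjlim {}^eR$ is required.
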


\begin{proof}
	Since \(R\) is a homomorphic image of a Gorenstein ring of finite Krull dimension, the class of Gorenstein injective modules is closed under direct limits (see \cite[Theorem 3.16]{Totally}). This enriches property (1) from Theorem \ref{3.1}. Recall from \cite{BH} that the global dimension of \(R^\infty\) is finite, and use the argument of Theorem \ref{3.1} to deduce that \(R\) is Gorenstein. The reverse implication is true without any assumption, and indeed it is well-known (see \cite[Corollary 3.23]{Totally}).
\end{proof}
The following gives a new proof of  \cite[Example 3.17(1)]{Ryo}.

\begin{corollary}\label{4.3}
	Suppose 
	\(
	\operatorname{id}_R(R^\infty) < \infty 	\) (resp. 	\(
	\operatorname{fdim}_R(R^\infty) < \infty
	\)).
Then \(R\) is regular.
\end{corollary}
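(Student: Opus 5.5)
We treat the two hypotheses separately and reduce each to a known Frobenius criterion; the injective case uses Theorem~\ref{4.2}, the flat case uses Kunz's theorem.

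\emph{Case $\operatorname{id}_R(R^\infty)<\infty$.} We may first pass to the completion $\widehat R$. Completion is flat and $(\widehat R)^\infty \cong R^\infty\otimes_R\widehat R$, so finiteness of the relevant dimension should ascend, and regularity descends from $\widehat R$ to $R$. Since $\operatorname{Gid}_R(R^\infty)\le \operatorname{id}_R(R^\infty)<\infty$, Theorem~\ref{4.2} shows that $R$ is Gorenstein. Over a Gorenstein local ring, a module of finite injective dimension has finite flat dimension (the Foxby/Enochs--Jenda equivalence between the classes of finite injective and finite flat dimension). Hence $\operatorname{fd}_R(R^\infty)<\infty$, and we are reduced to the second case.

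\emph{Case $\operatorname{fd}_R(R^\infty)<\infty$.} The aim is to show that $R\to R^\infty$ is actually flat; regularity then follows from Kunz's theorem. Flatness of $R^\infty$ means each ${}^eR$ is a direct limit stage of a flat module, and one would then argue as in Kunz, or more directly: $\operatorname{Tor}^R_i(k,R^\infty)=\varinjlim_e \operatorname{Tor}_i^R(k,{}^eR)$, and vanishing for $i>0$ together with $\mathfrak m R^\infty\neq R^\infty$ forces regularity.

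To show that $R^\infty$ is flat, I will pass to $\mathfrak p$-localizations, which are harmless since $(R^\infty)_{\mathfrak p}=(R_{\mathfrak p})^\infty$, and argue by a Frobenius-twist trick. Write $R^\infty=\varinjlim_e {}^eR$. Then $\operatorname{Tor}^R_i(M,R^\infty)=\varinjlim_e \operatorname{Tor}_i^R(M,{}^eR)$, and $\operatorname{Tor}_i^R(M,{}^eR)$ is the homology of $F^e(P_\bullet)$ for a free resolution $P_\bullet$ of $M$. Finite flat dimension gives vanishing for $i>d:=\operatorname{fd}$. Because $R^\infty\cong {}^1(R^\infty)$ as modules via Frobenius (Frobenius is bijective on $R^\infty$), the module $R^\infty$ is ``self-similar'', and Peskine--Szpiro type arguments push the vanishing down to every $i>0$: the complex $F^\infty(P_\bullet)$ is acyclic in high degrees, and an acyclicity lemma combined with $\operatorname{depth}$ considerations on the perfect ring yields acyclicity in all positive degrees.

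The main obstacle is exactly this last descent from ``vanishing for $i>d$'' to ``vanishing for all $i>0$''. If it is cumbersome, I would instead invoke the Bhatt--Scholze perfect-ring input used in the Proposition. There, $\operatorname{Tor}^R_i(k,R^\infty)$ is naturally an $R^\infty$-module, and $\operatorname{Tor}_i^R(k,R^\infty)\cong \operatorname{Tor}_i^R(k,R^\infty)^{(F)}$ is compatible with Frobenius twists. A nonzero top-degree $\operatorname{Tor}$ would then propagate to higher degrees, contradicting finiteness. This gives $\operatorname{Tor}_i^R(k,R^\infty)=0$ for all $i>0$, which is the Aberbach--Li criterion for regularity.
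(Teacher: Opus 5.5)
There is a genuine gap in the flat case. Because you reduce the injective case to it, the whole argument rests on that gap.

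You correctly isolate the missing step: passing from $\operatorname{Tor}^R_i(-,R^\infty)=0$ for $i>\operatorname{fd}_R R^\infty$ to vanishing for all $i>0$. Neither sketch supplies it.
\begin{itemize}
\item The acyclicity lemma applies to bounded complexes of finitely generated modules over a Noetherian local ring. Here $R^\infty\otimes_R P_\bullet$ is in general unbounded (e.g.\ for $P_\bullet$ resolving $k$ over a non-regular $R$, the very case to exclude). Its terms are non-finitely generated modules over the non-Noetherian ring $R^\infty$.
\item The self-similarity $R^\infty\cong {}^1(R^\infty)$ only identifies $\operatorname{Tor}^R_i(k,R^\infty)$ with a Frobenius twist of itself in the \emph{same} degree $i$. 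So nothing ``propagates'' to higher degrees.
\end{itemize}
If you cite Aberbach--Li in the single-index form quoted at the end of the paper (vanishing of $\operatorname{Tor}^R_i(k,R^\infty)$ for one $i>0$ forces regularity), the descent becomes unnecessary. However, that citation is essentially the flat case of the statement itself.

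The injective case has a further weak point. The isomorphism $(\widehat R)^\infty\cong R^\infty\otimes_R\widehat R$ can fail when $R$ is not excellent, since the right side need not be reduced. Ascent of finite injective dimension of a non-finitely generated module along $R\to\widehat R$ is also not automatic. So the appeal to Theorem~\ref{4.2}, which needs completeness, is not justified for a general local $R$.

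The missing idea is the finite global dimension of $R^\infty$, which you mention only as a fallback. Take a finite resolution
\[
0\to F_t\to\cdots\to F_0\to R^\infty/\mathfrak m R^\infty\to 0
\]
by free $R^\infty$-modules.
\begin{itemize}
\item Each $F_j$ is a direct sum of copies of $R^\infty$, so $\operatorname{fd}_R F_j=\operatorname{fd}_R R^\infty$, because Tor commutes with direct sums.
\item Likewise $\operatorname{id}_R F_j=\operatorname{id}_R R^\infty$, because direct sums of injectives over the Noetherian ring $R$ are injective.
\item Dimension shifting then shows that $R^\infty/\mathfrak m R^\infty$ has finite flat (resp.\ injective) dimension over $R$.
\item Since $\mathfrak m R^\infty\neq R^\infty$, this module is a nonzero $k$-vector space, so $k$ is an $R$-direct summand of it.
\item Hence $\operatorname{pd}_R k<\infty$ (resp.\ $\operatorname{id}_R k<\infty$), and $R$ is regular.
\end{itemize}
This is the paper's argument: that of Theorem~\ref{3.1}, with Gorenstein injective replaced by injective or flat. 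It treats both cases uniformly and needs neither completion, Theorem~\ref{4.2}, nor the Gorenstein comparison of injective and flat dimension.
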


\begin{proof}
By the argument of Theorem \ref{3.1}, it turns out that   
\(
\operatorname{id}_R(k) < \infty	\) (resp. 	\(
\operatorname{fdim}_R(k) < \infty
\)). 
So, \(R\) is regular.
\end{proof}

Suppose
$
\operatorname{Gid}_R(M \otimes R^\infty) < \infty.
$
One may ask whether \(\operatorname{id}_R(M) < \infty\). This is not the case, as the next result indicates. 

\begin{example}
Let $(R,\fm)$ be a  Gorenstein ring which is not  integral domain, and take \(M := \mathfrak p\) where \(\mathfrak p \in \operatorname{Spec}(R)\). Clearly,
	\(
	\operatorname{Gid}_R(M \otimes R^\infty) < \infty,
	\) as $R$ is  Gorenstein.
	If \(\operatorname{id}(\mathfrak p) < \infty\), then we would have \(\operatorname{pd}(\mathfrak p) < \infty\), as \(R\) is Gorenstein. This would imply that \(R\) is a domain, a contradiction.
\end{example}

An extension \(A \to B\) is called \emph{weakly \'etale} (or \emph{absolutely flat}) if both \(A \to B\) and \(B \otimes_A B \to B\) are flat.

\begin{corollary}\label{4.4}
	Suppose \(R \to S\) is weakly \'etale, \(S\) is complete, and \(R\) is Gorenstein. Then \(S\) is Gorenstein.
\end{corollary}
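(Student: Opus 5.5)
The plan is to reduce to Theorem \ref{4.2} applied to the complete local ring $S$: it suffices to show that $\operatorname{Gid}_S(S^\infty)<\infty$. The key input is the classical fact that weakly \'etale maps commute with perfection. Namely, for $R\to S$ weakly \'etale in characteristic $p$, the relative Frobenius $S\otimes_{R,\mathrm F}R\to S$ is an isomorphism. This is the Bhatt--Scholze result that weakly \'etale maps are relatively perfect. Passing to the colimit gives
\[
S^\infty\cong S\otimes_R R^\infty .
\]
I will take this as the first step and quote it rather than reprove it.

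Next I transport Gorenstein injectivity along $R\to S$. Since $R$ is Gorenstein, Theorem \ref{4.2} (reverse implication) gives $\operatorname{Gid}_R(R^\infty)<\infty$. Indeed, over a Gorenstein ring every module has Gorenstein injective dimension at most $\dim R$. I then want
\[
\operatorname{Gid}_S(S\otimes_R R^\infty)\le \operatorname{Gid}_R(R^\infty)+c
\]
for some constant $c$. The natural approach is to take a finite resolution $0\to R^\infty\to G^0\to\cdots\to G^n\to 0$ by Gorenstein injective $R$-modules and apply $S\otimes_R-$, which is exact by flatness. It then remains to check that $S\otimes_R G$ has finite Gorenstein injective dimension over $S$ whenever $G$ is Gorenstein injective over $R$.

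This last check is the main obstacle, since, as remarked before the section, $\operatorname{Ext}(E,-)$ does not behave well under flat base change. For the flat map $R\to S$ and an injective $R$-module $E$, the module $S\otimes_R E$ need not be injective over $S$. I would try to get around this by using the Gorenstein property of $R$ itself. Every flat $R$-module has finite injective dimension at most $\dim R$, so $S$ viewed as an $R$-module has $\id_R S<\infty$. Tensoring a finite injective resolution of an $R$-module with $S$ then produces $S$-modules whose Gorenstein injective dimension I would bound by comparing with Theorem \ref{3.1}'s closure properties: direct limits by \cite[Theorem 3.16]{Totally} (here $S$ is complete, hence a quotient of a Gorenstein ring), and two-out-of-three by \cite[Proposition 3.3]{Totally}.

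A cleaner alternative, which I would try first, bypasses Gorenstein injective dimension entirely. Weakly \'etale maps are flat with $S\otimes_R S\to S$ flat, and for local rings the closed fiber $S/\mathfrak m S$ is then a weakly \'etale algebra over a field, hence absolutely flat and local, hence a field. So $R\to S$ is flat local with closed fiber a field, which is Gorenstein. By the standard ascent of Gorensteinness along flat local maps with Gorenstein closed fiber \cite{BH}, $S$ is Gorenstein. I would present this fiber argument as the main proof and the $S^\infty\cong S\otimes_R R^\infty$ route through Theorem \ref{4.2} as the intended application. The main risk in the second route is the base-change step for Gorenstein injective dimension.
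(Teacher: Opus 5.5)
Your proposal is correct, and the fiber argument you make the main proof is genuinely different from the paper's proof.

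The paper's proof is exactly your second route. It deduces $S^\infty\cong S\otimes_R R^\infty$ from $S^{1/p^n}\cong S\otimes_R R^{1/p^n}$, and obtains $\operatorname{Gid}_R(R^\infty)<\infty$ from the easy direction of Theorem~\ref{4.2}. It then simply asserts that this implies $\operatorname{Gid}_S(S\otimes_R R^\infty)<\infty$, and finishes with the hard direction of Theorem~\ref{4.2} applied to the complete ring $S$.

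The base-change step you flag as the main risk is precisely the step the paper leaves unjustified. Your sketched repair (finiteness of $\id_R S$, closure under direct limits, two-out-of-three) does not close it either, since none of these facts says anything about $S\otimes_R G$ being Gorenstein injective over $S$. You are right not to rest the proof on it.

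Your fiber argument is complete and much more economical:
\begin{itemize}
\item the closed fiber $S/\mathfrak mS$ is weakly \'etale over $k$, hence absolutely flat;
\item being local and absolutely flat, it is a field;
\item Gorensteinness then ascends along the flat local map $R\to S$, whose closed fiber is Gorenstein (see \cite{BH}).
\end{itemize}
This needs no perfect closures, no Theorem~\ref{4.2}, no characteristic $p$, no completeness of $S$, and no relative perfectness of weakly \'etale maps. The paper's route only buys a presentation of the corollary as an application of the detection theorem. The price is an unproved transfer of finite Gorenstein injective dimension along $R\to S$.

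One small point: you use that $R\to S$ is a local map, so that $S/\mathfrak mS\neq 0$. If this is not among the hypotheses, replace $R$ by $R_{\mathfrak p}$, where $\mathfrak p$ is the contraction of the maximal ideal of $S$. Then $R_{\mathfrak p}$ is still Gorenstein, and $R_{\mathfrak p}\to S$ is still weakly \'etale because elements of $R\setminus\mathfrak p$ are already units in $S$. The same argument then applies.
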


\begin{proof}
	Since \(R \to S\) is weakly \'etale, we have \(S^{1/p^n} \simeq S \otimes_R R^{1/p^n}\). In particular,
	\(
	S^\infty \simeq S \otimes_R R^\infty.
	\)
	Since \(R\) is Gorenstein, Theorem \ref{4.2} gives \(\operatorname{Gid}_R(R^\infty) < \infty\). This implies
	\[
	\operatorname{Gid}_S(S \otimes_R R^\infty) < \infty,
	\]
	and hence \(\operatorname{Gid}_S(S^\infty) < \infty\). By Theorem \ref{4.2}, \(S\) is Gorenstein.
\end{proof}

\begin{question}
	\begin{enumerate}
		\item[1)] (See \cite{Ryo}).	Suppose \(
		\operatorname{Ext}^{+}_{R}(R^\infty,R^\infty)=0\).
		Is 	$R$ regular?
		
		\item[2)](See \cite[Conjecture 5.19]{Elham}). Suppose \(
		\operatorname{Ext}^{\gg 0}_{R}(k,R^\infty)=0\).
		Is 	$R$ regular? 
		
	\end{enumerate}
\end{question}

In Section 6 we need the following:

\begin{proposition}\label{47} Adopt the above notation. Any positive answer to
	Question	2),  yields a positive answer to Question 1). In particular, 1) is valid over F-pure rings.
\end{proposition}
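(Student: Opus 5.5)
The plan is to run the change-of-rings spectral sequence from the proof of the first Proposition of Section~2 with $N$ replaced by $R^\infty$. For any $R^\infty$-module $L$ there is a spectral sequence
\[
E_2^{pq}=\operatorname{Ext}^{p}_{R^\infty}\bigl(L,\operatorname{Ext}^{q}_R(R^\infty,R^\infty)\bigr)\Longrightarrow \operatorname{Ext}^{p+q}_R(L,R^\infty).
\]
It comes from taking an $R$-injective resolution $I^\bullet$ of $R^\infty$ and noting two facts: each $\operatorname{Hom}_R(R^\infty,I^j)$ is $R^\infty$-injective, and $\operatorname{Hom}_R(L,-)\cong\operatorname{Hom}_{R^\infty}(L,\operatorname{Hom}_R(R^\infty,-))$.

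\emph{Step 1 (collapse).} The hypothesis $\operatorname{Ext}^{+}_R(R^\infty,R^\infty)=0$ puts the $E_2$-page in the row $q=0$. The sequence therefore collapses to isomorphisms
\[
\operatorname{Ext}^{p}_R(L,R^\infty)\cong\operatorname{Ext}^{p}_{R^\infty}\bigl(L,\operatorname{End}_R(R^\infty)\bigr)\quad\text{for all } p .
\]

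\emph{Step 2 (bound via global dimension).} By Bhatt--Scholze, $g:=\operatorname{gldim}(R^\infty)<\infty$, so the right-hand side vanishes for $p>g$. Take $L:=R^\infty/\mathfrak m R^\infty$. As an $R$-module, $L\cong\bigoplus k$, hence
\[
\operatorname{Ext}^p_R(L,R^\infty)\cong\prod\operatorname{Ext}^p_R(k,R^\infty).
\]
This product is nonempty and vanishes, so $\operatorname{Ext}^{p}_R(k,R^\infty)=0$ for all $p>g$. That is exactly the hypothesis of Question~2). A positive answer to Question~2) then yields that $R$ is regular, which proves the first assertion. This part is formal, just as in the proof of the first Proposition, and I expect no difficulty.

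\emph{Step 3 (the F-pure case).} By the first assertion, it is enough to show that Question~2) has a positive answer when $R$ is F-pure. I would first reduce to the complete case. Both the F-pure property and the vanishing of $\operatorname{Ext}^{\gg0}_R(k,R^\infty)$ pass to $\widehat R$ by faithfully flat base change: $k$ is finitely presented, and $(\widehat R)^\infty$ is compared with $\widehat R\otimes_R R^\infty$. Regularity descends back from $\widehat R$ to $R$.

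Over complete $R$, F-purity makes $R\to R^\infty$ pure. Since a complete local ring is pure-injective, this map splits. Hence $\operatorname{Ext}^i_R(k,R)$ is a direct summand of $\operatorname{Ext}^i_R(k,R^\infty)$, which vanishes for $i\gg0$. So $\operatorname{id}_R R<\infty$ and $R$ is Gorenstein; along the route of Question~1) the same conclusion also follows from Corollary~\ref{25}, since $\operatorname{Ext}^+_R(R^\infty,R)$ is a summand of $\operatorname{Ext}^+_R(R^\infty,R^\infty)=0$.

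\emph{Main obstacle.} The step I expect to be hardest is upgrading ``Gorenstein'' to ``regular''. The natural first attempt is to write $R^\infty=\varinjlim{}^eR$ and use that $k$ is finitely presented, so that
\[
\operatorname{Ext}^i_R(k,R^\infty)=\varinjlim_e\operatorname{Ext}^i_R(k,{}^eR).
\]
One would then combine the F-split summands $R\subseteq{}^eR$ with the Gorenstein (hence Cohen--Macaulay) property. The aim is to show that the nonvanishing $\operatorname{Ext}^{d}_R(k,R)\neq0$ propagates, through the summand structure and Koszul or Matlis-duality arguments, to infinitely many degrees unless $k$ has finite projective dimension, in the spirit of Herzog's theorem. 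If this direct attack fails, I would instead invoke the known F-pure case of \cite[Conjecture 5.19]{Elham}.
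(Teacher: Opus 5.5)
Your Steps 1--2 are exactly the paper's proof of the first assertion. You use the same change-of-rings spectral sequence, the same test module $L=R^\infty/\mathfrak mR^\infty\cong\bigoplus k$, and the same appeal to $\operatorname{gldim}(R^\infty)<\infty$. Your $\operatorname{Ext}_{R^\infty}$ in the collapsed isomorphism is the correct form of what the paper writes.

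The paper gives no separate argument for the F-pure clause; it stops at the first assertion. Your Step 3 goes further, and it has a genuine gap. You correctly reduce the clause to Question 2) for F-pure rings. You also correctly show that $R$ is Gorenstein, via the splitting of $R\to R^\infty$. But nothing after that proves regularity. The ``propagation in the spirit of Herzog'' is not an argument. The fallback citation is not something the paper supplies either: it credits \cite{Elham} only with the isolated-singularity case. Separately, $(\widehat R)^\infty$ and $\widehat R\otimes_RR^\infty$ differ in general when $R$ is not F-finite, so your reduction to the complete case is not justified as stated. Fortunately the reduction is unnecessary.

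The missing idea is to use F-purity at every finite level, not only for $R\to R^\infty$.
\begin{enumerate}
\item Since $R$ is F-pure, each transition map ${}^eR\to{}^{e+1}R$ is a pure monomorphism of $R$-modules.
\item Since $k$ has a resolution $P$ by finitely generated free modules, $\operatorname{Ext}^p_R(k,-)=H^p(\operatorname{Hom}_R(P,R)\otimes_R-)$ carries pure monomorphisms to monomorphisms. As $\operatorname{Ext}^p_R(k,R^\infty)\cong\varinjlim_e\operatorname{Ext}^p_R(k,{}^eR)$, your Step 2 gives $\operatorname{Ext}^p_R(k,{}^eR)=0$ for all $p>g$ and all $e\ge 0$. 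The case $e=0$ recovers your Gorenstein conclusion, with no completion needed.
\item Let $d=\dim R$. Adjunction along $\F^e$ gives $\mathbf R\operatorname{Hom}_R(k,{}^eR)\cong\mathbf R\operatorname{Hom}_R(k\otimes^{\mathbf L}_R{}^eR,R)$. The complex $k\otimes^{\mathbf L}_R{}^eR$ has finite-length homology $\operatorname{Tor}^R_\ast(k,{}^eR)$. Local duality over the Gorenstein ring $R$ then yields $\operatorname{Ext}^p_R(k,{}^eR)\cong\operatorname{Tor}^R_{p-d}(k,{}^eR)^\vee$.
\item Hence $\operatorname{Tor}^R_i(k,R^\infty)\cong\varinjlim_e\operatorname{Tor}^R_i(k,{}^eR)=0$ for all $i\gg0$. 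The Aberbach--Li theorem recorded in Section 6 then shows that $R$ is regular.
\end{enumerate}
If $R$ is F-finite there is a shortcut. Then ${}^1R$ is a finitely generated maximal Cohen--Macaulay module with $\operatorname{Ext}^{\gg0}_R(k,{}^1R)=0$, so it has finite injective dimension. It is therefore free over the Gorenstein ring $R$, and Kunz's theorem applies.
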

\begin{proof}
	There is a spectral sequence
	\[
	\operatorname{Ext}^{i}_{R^\infty}
	\left(
	L,\operatorname{Ext}^{j}_{R}(R^\infty,R^\infty)
	\right)
	\Longrightarrow
	\operatorname{Ext}^{i+j}_{R}(L,R^\infty),
	\]where $L$ is any $R^\infty$-module.
	This gives 
	$\operatorname{Ext}^{i}_{R}
	\left(
	L,\operatorname{Hom}_{R}(R^\infty,R^\infty)
	\right)
	\simeq
	\operatorname{Ext}^{i}_{R}(L,R^\infty).
	$ Set
	$
	L=\frac{R^\infty}{\mathfrak m R^\infty}.
	$
	But
	$
	\operatorname{gl.dim}(R^\infty)<\infty,
	$ (see \cite{BS1}).	Thus
	$
	\operatorname{Ext}^{i}_{R}(L,-)=0 $
	for all $i\gg0,
	$
	and therefore
	$
	\operatorname{Ext}^{i}_{R}
	\left(
	\frac{R }{\mathfrak m },
	R^\infty
	\right)=0 
	$,  and so $R$ is regular.
\end{proof}	
	\maketitle
	
\section{Ascent and descent from (to) $R^\infty$ }

The following was asked in \cite[Question 3.9]{Al}:
\begin{question} Assume each parameter ideal in $R$ is Frobenius closed. Is it true that every power of a parameter ideal is also Frobenius closed?
\end{question}

\begin{notation}Let $I\lhd R$ be an ideal.
	By $I^\star$ (resp. $I^F$) we denote the tight closure (resp. Frobenius closure) of $I$.
\end{notation}
\begin{proposition}\label{42}
	Let $R$ be a equi-dimensional and reduced local ring of prime characteristic $p$ which is quotient of a Cohen-Macaulay ring.
	Assume that $R$ is $F$-coherent. Let
	$
	I=(g_1,\ldots,g_d)
	$
	be an ideal generated by a system of parameters of $R$. If $I$ is
	Frobenius closed, then
	$
	(I^n)^F=I^n
	$
	for every integer $n\geq 1$.
\end{proposition}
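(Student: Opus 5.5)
The plan is to move the question to the perfect closure $A:=R^\infty$, where coherence lets us control the relevant Koszul homology, and then to descend back to $R$. First I would reformulate Frobenius closure. For an ideal $J\subseteq R$ one has $J^F=JA\cap R$: indeed $x\in J^F$ iff $x^{q}\in J^{[q]}$ for some $q$, iff $x\in JR^{1/q}$, iff $x\in JA$. So the hypothesis $I^F=I$ says $IA\cap R=I$, and the goal is $I^nA\cap R=I^n$ for every $n\ge1$.

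Second, I would show that $g_1,\ldots,g_d$ behaves like a regular sequence on $A$. Since $R$ is equidimensional, reduced and a quotient of a Cohen--Macaulay ring, the parameter ideals are colon-capturing: $(g_1,\ldots,g_{i})^\star\supseteq (g_1,\ldots,g_{i-1}):g_i$ after passing to $R^{1/q}$. Combining colon capturing with Frobenius closedness (and the fact that tight closure of parameter ideals is controlled by Frobenius closure in $A$), I expect to get
\[(g_1,\ldots,g_{i-1})A:_A g_i=(g_1,\ldots,g_{i-1})A,\]
so that $\underline g$ is a regular sequence on $A$. Here $F$-coherence enters: because $A$ is coherent, the colon ideals and the Koszul homology $H_i(\underline g;A)$ are finitely presented, and vanishing can be tested by approximating with the finite levels $R^{1/q}$. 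This is the step I expect to be the main obstacle. Frobenius closedness of $I$ alone gives only $IA\cap R=I$, not the vanishing of all higher Koszul homology on $A$. I would first try the standard argument that colon capturing in $R^{1/q}$ yields $H_i(\underline g;A)$ annihilated by elements of arbitrarily small order, and that coherence then forces this homology to vanish.

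Third, once $\underline g$ is $A$-regular, $\operatorname{gr}_{IA}(A)\cong (A/IA)[T_1,\ldots,T_d]$, so $I^nA/I^{n+1}A\cong (A/IA)^{\binom{n+d-1}{d-1}}$ is free over $A/IA$ with basis the monomials of degree $n$. The same monomials span $I^n/I^{n+1}$ over $R/I$. I would then argue by induction on $n$. Take $x\in I^nA\cap R$. By induction $x\in I^{n-1}$, so write $x=\sum_{|\alpha|=n-1}r_\alpha \underline g^\alpha$ with $r_\alpha\in R$. Since $x\in I^nA$, the image of $x$ in $I^{n-1}A/I^nA$ vanishes, and freeness gives $r_\alpha\in IA\cap R=I$ for all $\alpha$. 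Hence $x\in I^n$. This proves $(I^n)^F=I^n$ for all $n\ge1$.
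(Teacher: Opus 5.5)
Your route differs from the paper's: you prove that $g_1,\dots,g_d$ is regular on $A=R^\infty$, whereas the paper shows $R$ itself is Cohen--Macaulay. Steps~1 and~3 are correct. $J^F=JA\cap R$ holds for every ideal $J$. Once $\underline g$ is $A$-regular it is quasi-regular, which needs no Noetherian hypothesis. So $I^{n-1}A/I^nA$ is free over $A/IA$ on the monomials, and your coefficient argument gives $I^nA\cap R=I^n$. This is the same mechanism as in Proposition~\ref{5.4}.

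\textbf{The gap.} Step~2 carries the whole content of the $F$-coherence hypothesis, and you only announce it. Three points need repair.
\begin{itemize}
\item Colon capturing must be used in the form $(g_1,\dots,g_{i-1})S:_Sg_i\subseteq((g_1,\dots,g_{i-1})S)^\star$, where $S=R^{1/q}\cong R$. Here $S$ is again equidimensional, a quotient of a Cohen--Macaulay ring, and $\underline g$ is a system of parameters of $S$. The containment in $(g_1,\dots,g_i)^\star$ that you wrote is too weak to give regularity.
\item The hypothesis $I^F=I$ plays no role in this step.
\item Most importantly, ``coherence then forces this homology to vanish'' is not formal. Coherence only says that $J:=(g_1,\dots,g_{i-1})A:_Ay$ (or $\operatorname{Ann}_AH_i(\underline g;A)$) is finitely generated. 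Colon capturing only says that $J\ni c^{1/q'}$ for all $q'$, for some $c\in S^\circ$. Nothing you wrote excludes a proper finitely generated ideal containing all these roots.
\end{itemize}

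\textbf{The missing lemma.} A finitely generated ideal $J$ of $A=S^\infty$ containing $c^{1/q}$ for every $q$, with $c\in S^\circ$, equals $A$.

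Proof. Suppose $J=(a_1,\dots,a_s)\neq A$. Then $J$ lies in the maximal ideal of the local ring $A$, so there is $q_0$ with $a_i^{q_0}\in\mathfrak m_S$ for all $i$. Take a minimal prime $P$ of $S$, so $c\notin P$, and a DVR $(V,v)$ dominating $S/P$; this exists since $\dim S/P=d\ge1$. Extend $v$ along $A\to(S/P)^\infty$ by $v(z^{1/q})=v(z)/q$. Then $v\ge0$ on $A$ and $v(a_i)\ge1/q_0$, so $v\ge1/q_0$ on all of $J$. But $v(c^{1/q})=v(c)/q\to0$ because $v(c)<\infty$, a contradiction.

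This is what makes your phrase ``elements of arbitrarily small order'' meaningful. Applied to $J=(g_1,\dots,g_{i-1})A:_Ay$, it gives $y\in(g_1,\dots,g_{i-1})A$. Together with $IA\neq A$, this shows $\underline g$ is $A$-regular and completes your proof.

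Equivalently, the lemma gives $J^\star=J^F$ for every ideal of an $F$-coherent local ring; the paper takes this input from \cite[Proposition~3.11]{Shi10}. Applying it in $S\cong R$ to $(g_1,\dots,g_{i-1})S$, together with colon capturing, gives Step~2 in one line, with no Koszul homology needed.

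\textbf{Comparison with the paper.} The paper's argument stays on $R$. From $I^\star=I^F=I$ it uses that one tightly closed ideal generated by a full system of parameters, in an equidimensional quotient of a Cohen--Macaulay ring, makes $R$ $F$-rational and hence Cohen--Macaulay. Then $\underline g$ is already $R$-regular, and the graded argument over $R$ (cited from \cite[Proposition~3.10]{Al}) finishes. That route yields the stronger fact that $R$ itself is $F$-rational and Cohen--Macaulay, but it relies on $I$ being generated by a full system of parameters. Your route, once repaired, needs only regularity on $R^\infty$ and uses $I^F=I$ only in Step~3. It therefore applies verbatim to ideals generated by part of a system of parameters.
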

\begin{proof}
	By F-coherent we see from \cite[Proposition 3.11]{Shi10} that $I^\star=I^F=I$, and by equi-dimensional assumption it follows that $R$ is F-rational (see \cite[Proposition 10.3.5]{BH}). Then $R$ is Cohen-Macaulay  (see \cite[Corollary 10.3.4]{BH}), as it is quotient of a Cohen-Macaulay ring. So, $I$ is generated 	by regular sequence and the claim is clear by 
	\cite[Proposition 3.10]{Al}.
\end{proof}

Here, we are going to drop the restriction on the length of system of parameters, the equi-dimensional,  reduced, and quotient of a Cohen-Macaulay ring 
from {Proposition} \ref{42}. We record a well-known fact:
\begin{fact}[Frobenius descent]
	Let $I$ be an ideal of $R$. If
	$
	x\in IR^\infty\cap R,
	$
	then there exists an integer $e\geq 0$ such that
	$
	x^{p^e}\in I^{[p^e]}.
	$
In particular, $I^F=IR^\infty\cap R$.
\end{fact}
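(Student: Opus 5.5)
We must prove the Frobenius descent fact: if $x\in IR^\infty\cap R$, then $x^{p^e}\in I^{[p^e]}$ for some $e$, and hence $I^F=IR^\infty\cap R$.

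The argument follows the description of $R^\infty$ as the direct limit $\varinjlim_e {}^eR$ of Frobenius iterates. Equivalently, when $R$ is reduced it is the union $\bigcup_e R^{1/p^e}$; in general elements of $R^\infty$ are classes $[r,e]$ with $r\in R$ in the $e$-th copy. We fix generators $I=(f_1,\ldots,f_n)$.

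\textbf{Step 1: put the expression at a finite level.} Write $x=\sum_i a_i f_i$ in $R^\infty$ with $a_i\in R^\infty$. Since only finitely many coefficients occur, there is an $e$ with every $a_i$ represented in the $e$-th copy, say $a_i=[b_i,e]$ with $b_i\in R$. In the $e$-th copy, the element $r\in R$ of the base corresponds to $[r^{p^e},e]$, since the structure map is $\F^e$. The relation therefore becomes
\[
\Bigl[x^{p^e}-\sum_i b_i f_i^{p^e},\,e\Bigr]=0 \quad\text{in } R^\infty .
\]

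\textbf{Step 2: clear the direct-limit kernel.} An element $[y,e]$ vanishes in the direct limit exactly when $\F^{e'}(y)=0$ in $R$ for some $e'\ge 0$. Applying this to $y=x^{p^e}-\sum_i b_i f_i^{p^e}$ and raising to the power $p^{e'}$ gives
\[
x^{p^{e+e'}}=\sum_i b_i^{p^{e'}} f_i^{p^{e+e'}}\in I^{[p^{e+e'}]}.
\]
This proves the first assertion with exponent $e+e'$. In the reduced case no $e'$ is needed, since $\F$ is injective.

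\textbf{Step 3: the equality $I^F=IR^\infty\cap R$.} The inclusion $\supseteq$ is Step 2, because $I^F$ is by definition the set of $x$ with $x^{q}\in I^{[q]}$ for some $q=p^e$. For $\subseteq$, suppose $x^{p^e}=\sum_i c_i f_i^{p^e}$. In the $e$-th copy, $x$ corresponds to $[x^{p^e},e]$, which equals $\sum_i [c_i,e]\,f_i$. Hence $x\in IR^\infty$.

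The only delicate point is correctly tracking the twisted module structure on ${}^eR$ and the kernel of $R\to R^\infty$, which is the nilpotent-type kernel of Frobenius. Everything else is bookkeeping.
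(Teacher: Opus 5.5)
Your proof is correct. The paper gives no argument for this statement and records it as a well-known fact. Your direct-limit unwinding is exactly the standard verification it implicitly relies on: you pass to a finite stage ${}^eR$, use that $[y,e]$ vanishes in $R^\infty$ if and only if $y^{p^{e'}}=0$ for some $e'$, and then apply one more Frobenius power. You also correctly read $IR^\infty\cap R$ as the preimage of $IR^\infty$ under $R\to R^\infty$. This reading is needed when $R$ is not reduced, because the kernel of that map is the nilradical.
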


Let us present a property weaker than F-coherent rings. Recall from \cite{MP} that $R$ is called \emph{weakly F-nilpotent}  if  every element of $H^i_\fm(R)$ is annihilated by some   $\F^n$ for
$i < \dim(R)$.
 Recall from \cite[Exercise 68]{MP}  that over excellent F-nilpotent rings also one has
$I^\star=I^F$, and so similar to F-coherent rings.
\begin{proposition}\label{5.4}
	Let $R$ be a  local ring of prime characteristic $p$.
	Assume that $R$ is weakly F-nilpotent. Let
	$
	I=(g_1,\ldots,g_t)
	$
	be an ideal generated by a part of a system of parameters of $R$. If $I$ is
	Frobenius closed, then
	$
	(I^n)^F=I^n
	$
	for every integer $n\geq 1$.
\end{proposition}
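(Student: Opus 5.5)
We want to show $(I^n)^F=I^n$ for $I=(g_1,\ldots,g_t)$, a Frobenius closed ideal generated by part of a system of parameters, in a weakly F-nilpotent ring. The plan is to reduce to the situation of \cite[Proposition 3.10]{Al}, where the powers of an ideal generated by a regular sequence are handled. We replace "regular sequence" by "regular sequence up to Frobenius nilpotence". We then exploit that $I^n$ has a filtration whose graded pieces are direct sums of copies of $R/I$, up to Frobenius-killed error terms.

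\textbf{Step 1.} Extend $g_1,\ldots,g_t$ to a full system of parameters $g_1,\ldots,g_d$. Weak F-nilpotence says that $H^i_\fm(R)$ is F-nilpotent elementwise for $i<d$. Using the Koszul/\v{C}ech comparison, I would deduce that the Koszul homologies $H_j(g_1,\ldots,g_t;R)$, $j>0$, are annihilated by Frobenius powers in the colimit. Concretely, the kernel of each map $R/(g_1,\ldots,g_{i-1})\xrightarrow{g_i}R/(g_1,\ldots,g_{i-1})$ should be killed after passing to $R^\infty$. So $g_1,\ldots,g_t$ should behave as a regular sequence on $R^\infty$ up to the relevant colimit. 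The cleanest target statement is
\[
\Bigl((g_1,\ldots,g_{i-1})R^\infty :_{R^\infty} g_i\Bigr)=(g_1,\ldots,g_{i-1})R^\infty .
\]
This is the "colon-capturing for Frobenius closure" known for weakly F-nilpotent rings (cf. \cite{MP}). It says that $g_1,\ldots,g_t$ is an $R^\infty$-regular sequence.

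\textbf{Step 2.} Once $g_1,\ldots,g_t$ is regular on $R^\infty$, the associated graded ring of $IR^\infty$ is the polynomial ring $(R^\infty/IR^\infty)[X_1,\ldots,X_t]$. Hence $I^nR^\infty/I^{n+1}R^\infty$ is free over $R^\infty/IR^\infty$ on the monomials of degree $n$. Using the Frobenius descent fact $J^F=JR^\infty\cap R$, the hypothesis $I^F=I$ reads $IR^\infty\cap R=I$. We then prove $I^nR^\infty\cap R=I^n$ by induction on $n$.

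\textbf{Step 3 (induction).} Take $x\in I^{n+1}R^\infty\cap R$. By induction $x\in I^n$, so $x=\sum_{|\alpha|=n} r_\alpha g^\alpha$ with $r_\alpha\in R$. Its image in $I^nR^\infty/I^{n+1}R^\infty$ is zero. By the freeness in Step 2, each $r_\alpha\in IR^\infty\cap R=I$, so $x\in I^{n+1}$. This mirrors the proof of \cite[Proposition 3.10]{Al}, with $R$ replaced by $R^\infty$.

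\textbf{Main obstacle.} The hard part is Step 1. We must show that weak F-nilpotence, which only controls $H^i_\fm(R)$ for $i<\dim R$, forces $g_1,\ldots,g_t$ to be $R^\infty$-regular; equivalently, that the lower local cohomology of $R^\infty=\varinjlim {}^eR$ vanishes. I would first note that $H^i_\fm(R^\infty)=\varinjlim_F H^i_\fm(R)=0$ for $i<d$, precisely by weak F-nilpotence. Then $R^\infty$ is a big Cohen--Macaulay algebra in the sense that every system of parameters is a regular sequence. This uses the standard criterion that vanishing of $H^i_\fm$ for $i<d$ implies that parameters are regular, valid for quasi-finite modules as long as $R^\infty/\fm R^\infty\neq 0$. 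That criterion in the non-finitely-generated setting is where care is required. If it fails, I would argue directly by induction on $t$ with \v{C}ech complexes on $R^\infty$.
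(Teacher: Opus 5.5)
Your proof is essentially the paper's: both take from \cite{MP} that $R^\infty$ is a balanced big Cohen--Macaulay algebra when $R$ is weakly F-nilpotent. Hence $g_1,\ldots,g_t$ is $R^\infty$-regular and the degree-$n$ piece of the associated graded module of $IR^\infty$ is free over $R^\infty/IR^\infty$ on the monomials, after which both run the same induction on $n$ using $J^F=JR^\infty\cap R$. Your version, testing $I^{n+1}R^\infty\cap R$ directly, merely avoids the paper's detour through $r^q\in (I^{[q]})^{n+1}$.

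One caution about Step 1. The general criterion you invoke there is false for modules that are not finitely generated. That criterion says that vanishing of $H^i_\fm$ for $i<d$, together with $M\neq\fm M$, forces every system of parameters to be $M$-regular. For a counterexample, take $k[[x,y]]$ and $M=k[[x,y]]\oplus k((y))$, where $k((y))=k[[x,y]]_y/x\,k[[x,y]]_y$. This $M$ satisfies both conditions, yet $x$ kills $(0,1)$. So Step 1 has to rest on the result from \cite{MP}, which uses the specific structure of $R^\infty$, exactly as the paper does.
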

\begin{proof}
	We proceed by induction on $n$. The case $n=1$ follows from the hypothesis
	that $I$ is Frobenius closed.
	Since $R$ is weakly F-nilpotent,   $R^\infty$ is a
	balanced big Cohen--Macaulay $R$-algebra (see \cite[Proposition 12.22]{MP}). 
	Suppose that
	$
	(I^n)^F=I^n
	$
	for some $n\geq 1$, and let
	$
	r\in (I^{n+1})^F.
	$
	Since
	$
	I^{n+1}\subseteq I^n,
	$
	we have
	$
	r\in (I^n)^F=I^n.
	$
	Hence we can write
	$
	r=\sum_{|\alpha|=n}c_\alpha g^\alpha,
	$
	where
	$
	\alpha=(\alpha_1,\ldots,\alpha_t),
	$ and
	$|\alpha|=\alpha_1+\cdots+\alpha_t,
	$
	also
	$
	g^\alpha=g_1^{\alpha_1}\cdots g_t^{\alpha_t}.
	$
	Since
	$
	r\in (I^{n+1})^F,
	$
	there exists a power
	$
	q=p^e
	$
	of $p$ such that
	$
	r^q\in (I^{n+1})^{[q]}.
	$
	Set
	$
	h_i=g_i^q,1\leq i\leq t
	$
	and
	$
	Q=(h_1,\ldots,h_t)=I^{[q]}.
	$
	Since Frobenius powers commute with ordinary powers,
	$
	(I^{n+1})^{[q]}
	(I^{[q]})^{n+1}
	Q^{n+1}.
	$
	Therefore
	$
	r^q
	\sum_{|\alpha|=n}c_\alpha^q h^\alpha
	\in Q^{n+1},
	$
	where
	$
	h^\alpha=h_1^{\alpha_1}\cdots h_t^{\alpha_t}.
	$
	Because $R^\infty$ is balanced big Cohen--Macaulay and
	$g_1,\ldots,g_t$ is part of a system of parameters, the sequence
	$
	g_1,\ldots,g_t
	$
	is $R^\infty$-regular. Hence
	$
	h_1=g_1^q,\ldots,h_t=g_t^q
	$
	is also an $R^\infty$-regular sequence.
	Consider the homogeneous polynomial
	$
	F(Y_1,\ldots,Y_t)
	\sum_{|\alpha|=n}c_\alpha^qY^\alpha
	$
	in
	$
	R^\infty[Y_1,\ldots,Y_t].
	$
	The containment
	$
	F(h_1,\ldots,h_t)\in Q^{n+1}R^\infty
	$
	and the regularity of
	$
	h_1,\ldots,h_t
	$
	imply, by the coefficient criterion for powers of a regular sequence, that
	$
	c_\alpha^q\in QR^\infty
	$
	for every $\alpha$ with
	$
	|\alpha|=n.
	$
	Since
	$
	c_\alpha^q\in R,
	$
	the Frobenius descent lemma, applied to the ideal
	$
	Q=I^{[q]},
	$
	gives an integer $f\geq0$ such that
	$
	(c_\alpha^q)^{p^f}\in Q^{[p^f]}.
	$
	Thus
	$
	c_\alpha^{qp^f}
	\in
	(I^{[q]})^{[p^f]}
	I^{[qp^f]}.
	$
	Since
	$
	qp^f
	$
	is again a power of $p$, this shows that
	$
	c_\alpha\in I^F.
	$
	By the hypothesis
	$
	I^F=I,
	$
	we conclude that
	$
	c_\alpha\in I
	$
	for every $\alpha$. Consequently,
	$
	r
	=
	\sum_{|\alpha|=n}c_\alpha g^\alpha
	\in I^{n+1}.
	$
	Therefore
	$
	(I^{n+1})^F=I^{n+1}.
	$
	The induction is complete.
\end{proof}

There are several similarity between F-coherent and weakly nilpotent, namely the big Cohen-Macaulayness of their perfection.
\begin{proposition}
	Let $R$ be a reduced local ring of characteristic $p > 0$. Then $R$ is
	weakly nilpotent if and only if the henselization 
	is so.
\end{proposition}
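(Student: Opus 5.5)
The plan is to translate weak F-nilpotence into a statement about local cohomology, and then use the fact that local cohomology commutes with henselization. Write $R^h$ for the henselization. The map $R\to R^h$ is faithfully flat with $\fm R^h$ the maximal ideal of $R^h$. Hence $H^i_{\fm R^h}(R^h)\cong H^i_\fm(R)\otimes_R R^h$ and $\dim R^h=\dim R$. Since $H^i_\fm(R)$ is $\fm$-torsion, the natural map $H^i_\fm(R)\to H^i_\fm(R)\otimes_R R^h$ is an isomorphism. This holds because $R/\fm^n\cong R^h/\fm^nR^h$ for all $n$, and $H^i_\fm(R)$ is a direct limit of modules killed by powers of $\fm$. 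Reducedness is preserved in both directions: $R^h$ is reduced when $R$ is, and $R\subseteq R^h$.

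The key step is to check that this identification is compatible with the Frobenius actions. The Frobenius on $H^i_\fm(R)$ is induced from the Frobenius on the Čech complex $\check C(\underline{x};R)$ built on a system of parameters $\underline{x}$. Frobenius on $R^h$ restricts to Frobenius on $R$, so the induced map $\check C(\underline{x};R)\to \check C(\underline{x};R^h)$ commutes with $\F$. Also $\check C(\underline{x};R^h)=\check C(\underline{x};R)\otimes_R R^h$ computes $H^i_{\fm R^h}(R^h)$, since $\underline{x}$ is also a system of parameters of $R^h$. The map on cohomology is therefore a Frobenius-equivariant isomorphism
\[
H^i_\fm(R)\xrightarrow{\ \cong\ } H^i_{\fm R^h}(R^h).
\]

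It follows that an element is killed by some $\F^n$ in $H^i_\fm(R)$ if and only if its image is killed by some $\F^n$ in $H^i_{\fm R^h}(R^h)$. The range $i<\dim R=\dim R^h$ is the same on both sides. So every element of $H^i_\fm(R)$ with $i<\dim R$ is F-nilpotent exactly when the same holds for $R^h$, which gives both implications.

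The main obstacle is the equivariance claim. The Frobenius on $H^i_\fm(R)\otimes_R R^h$ is not $\F\otimes\F$ in any naive sense, because $\F$ is only $p$-linear. This is why I would argue directly at the level of Čech complexes rather than through abstract base change. The isomorphism of underlying groups is a standard property of torsion modules. The same argument also applies to the completion, should one need it.
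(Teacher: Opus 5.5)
Your proof is correct, but it takes a different route from the one the paper has in mind.

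The paper gives no details. It only says the argument is parallel to Shimomoto's henselization result for $F$-coherence \cite[Proposition~3.18]{Shi10}. The natural reading, given the remark just before it about big Cohen--Macaulay perfections, is a proof through the perfect closure:
\begin{enumerate}
\item weak $F$-nilpotence is read as $H^i_\fm(R^\infty)=\varinjlim_{\F}H^i_\fm(R)=0$ for $i<\dim R$, via \cite[Proposition~12.22]{MP};
\item since $R\to R^h$ is a filtered colimit of \'etale maps, $(R^h)^\infty\cong R^h\otimes_R R^\infty$, hence $H^i_{\fm R^h}((R^h)^\infty)\cong H^i_\fm(R^\infty)\otimes_R R^h$;
\item faithful flatness then transfers the vanishing in both directions.
\end{enumerate}

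You never pass to $R^\infty$. Instead you prove directly that $H^i_\fm(R)\to H^i_{\fm R^h}(R^h)$ is a Frobenius-equivariant isomorphism. This uses only that $R\to R^h$ is flat and local with $\fm R^h$ maximal and $R/\fm^n\cong R^h/\fm^nR^h$.

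Your route is more elementary and more general. It uses neither reducedness (so that hypothesis is superfluous for the statement) nor the \'etale base change for perfections. It also applies verbatim to the completion, where $\widehat R^\infty\cong\widehat R\otimes_R R^\infty$ is not available without extra hypotheses such as $F$-finiteness. The paper's route, in exchange, stays in the language of $R^\infty$, in keeping with its theme, and treats $F$-coherence and weak $F$-nilpotence by one mechanism.

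One small correction to your last paragraph: the worry about $\F\otimes\F$ is unfounded. The map $m\otimes s\mapsto \F(m)\otimes s^p$ is a well-defined $p$-linear map on $H^i_\fm(R)\otimes_R R^h$, and under flat base change it is exactly the Frobenius of $H^i_{\fm R^h}(R^h)$. Your \v{C}ech-complex check is simply one way of verifying this.
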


\begin{proof}
This is parallel to \cite[Proposition 3.18]{Shi10}, so we leave its straightforward modification to the reader.
\end{proof}

An ideal $I$ of regular ring ring $R$ is called  cohomologically complete-intersection if
$$H^i_I(R)=0\Longleftrightarrow i\neq \grade(I,R).$$For example, any ideal generated by regular sequence is of this form. A celebrated theorem of Peskine-Szpiro \cite{PS} says any Cohen-Macaulay ideal in a regular ring of prime characteristic is of this mood. Here, is another sample:
\begin{proposition}\label{4.5} Any complete $F$-coherent reduced ring $A$ is the form
	\(
	A \cong   R/I,
	\)
	where $R$ is regular and $I$ is cohomologically complete-intersection.
\end{proposition}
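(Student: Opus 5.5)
By Cohen's structure theorem, I would write the complete local ring $A$ as $A\cong R/I$, where $R$ is a complete regular local ring of characteristic $p$ (for instance a power series ring over the residue field, or over a coefficient field). It then remains to show that $I$ is cohomologically complete-intersection, i.e.\ that $H^i_I(R)=0$ for all $i\neq \grade(I,R)$. Since $R$ is regular, $\grade(I,R)=\operatorname{ht}(I)=\dim R-\dim A$.

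The bridge is the Peskine--Szpiro theorem quoted above: a Cohen--Macaulay ideal in a regular ring of prime characteristic is cohomologically complete-intersection. So the main step is to prove that $A$ is Cohen--Macaulay.

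\begin{itemize}
\item Because $A$ is $F$-coherent, Shimomoto's result \cite[Proposition 3.11]{Shi10}, already used in Proposition \ref{42}, gives $J^\star=J^F$ for parameter ideals $J$ of $A$.
\item $F$-coherence also forces $A^\infty$ to be a balanced big Cohen--Macaulay $A$-algebra, as remarked before Proposition \ref{5.4}. Hence every system of parameters of $A$ is a regular sequence on $A^\infty$, and $H^i_{\fm}(A^\infty)=0$ for $i<\dim A$.
\item Local cohomology commutes with direct limits, so $H^i_\fm(A^\infty)=\varinjlim_{F} H^i_\fm(A)$. Thus for $i<\dim A$ every element of $H^i_\fm(A)$ is killed by some Frobenius power; that is, $A$ is weakly $F$-nilpotent.
\end{itemize}

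To conclude that $A$ is Cohen--Macaulay, I would not try to make $H^i_\fm(A)$ vanish by Frobenius arguments alone. Instead I would use Peskine--Szpiro's Frobenius characterization directly on $R$. For an ideal $I$ of a regular ring, $H^i_I(R)=\varinjlim_e \Ext^i_R(R/I^{[p^e]},R)$, with transition maps induced by Frobenius, which is flat on $R$ by Kunz. By local duality on the complete regular ring $R$, the vanishing of $H^i_I(R)$ for $i>\operatorname{ht} I$ is dual to the statement that $\varinjlim H^{j}_{\fm}(A)$ along Frobenius vanishes for $j<\dim A$. This is exactly the weak $F$-nilpotence just obtained, together with the reducedness of $A$, which makes the Frobenius on $A$ injective and identifies the limit with $H^j_\fm(A^\infty)$. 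This would give $H^i_I(R)=0$ for $i>\operatorname{ht}(I)$ without needing full Cohen--Macaulayness of $A$. The vanishing for $i<\grade(I,R)$ is automatic.

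The main obstacle is the duality step. One must identify $\varinjlim_e \Ext^{n-j}_R(R/I^{[p^e]},R)$ with the Matlis dual of the inverse system $\varprojlim H^j_\fm(R/I^{[p^e]})$. The subtle point is comparing $R/I^{[p^e]}$ with $F^e_*(R/I)$: these agree as $R$-modules via $F^e_*$ precisely because Frobenius on $R$ is flat. After that comparison, the vanishing of the limit of duals has to be deduced from Frobenius-nilpotence of $H^j_\fm(A)$. This is where care with Mittag-Leffler and limit/dual commutation is needed. I would first attempt it via Lyubeznik's $F$-module formalism, where $H^i_I(R)$ is the $F$-finite module generated by $\Ext^i_R(A,R)$, and nilpotence of the generating morphism is equivalent to vanishing.
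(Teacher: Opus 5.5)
Your route is the same as the paper's: Cohen's structure theorem, then $F$-coherence $\Rightarrow$ $A^\infty$ big Cohen--Macaulay $\Rightarrow$ $H^j_{\fm}(A^\infty)=0$ for $j<\dim A$, then a Lyubeznik-type duality turning this into $H^i_I(R)=0$ for $i\neq\operatorname{ht}(I)$, which the paper does not prove but cites as the equivalence $(2)\equiv(6)$ of [topo, Example 4.38]. If you want to prove that last step yourself, three corrections resolve your ``main obstacle'': $R/I^{[p^e]}$ is the Frobenius pullback $F^{e*}(R/I)$, not $F^e_*(R/I)$, so flatness gives $\Ext^i_R(R/I^{[p^e]},R)\cong F^{e*}\Ext^i_R(A,R)$; no limit/dual commutation problem arises, because $(\varinjlim_e M_e)^\vee\cong\varprojlim_e M_e^\vee$ always holds and a module vanishes iff its Matlis dual does; and passing from elementwise to uniform Frobenius nilpotence on the artinian module $H^j_{\fm}(A)$ is exactly the Hartshorne--Speiser--Lyubeznik theorem (reducedness plays no role in this step).
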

\begin{proof} By Cohen's structure theorem, $A$ is the form
	\(
	A \cong   R/I,
	\)
	where $R$ is regular. Since $F$-coherent implies that $A^\infty$ is big Cohen--Macaulay (see \cite{Shi10}), we have
	\[
	H_\fm^i(A^\infty)=0 \qquad  \forall i\neq  \dim(A) ,
	\]Then we have
	\(
	H_I^i(R)=0\) for all  \(i\neq \operatorname{ht}(I),
	\) see  $(2)\equiv(6)$ in \cite[Example 4.38]{topo}.
	Therefore $A$ is a cohomologically complete-intersection ring.
\end{proof}

\begin{remark}\label{46}
	The converse of above result is not true. Indeed, there is a hypersurface $A$ which is not F-coherent (see \cite[Example 4.4]{Shi10}).
\end{remark}

We conclude this section with a characterization of weakly \(F\)-nilpotent complete local rings.

\begin{corollary}\label{59}
	A complete local ring is weakly \(F\)-nilpotent if and only if it is cohomologically complete-intersection.
\end{corollary}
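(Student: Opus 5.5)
The plan is to run both implications through the intermediate statement ``$A^\infty$ is a balanced big Cohen--Macaulay $A$-algebra.'' Equivalently, $H^i_\fm(A^\infty)=0$ for all $i\neq\dim A$. We may first reduce to $A$ reduced, since $A^\infty=(A_{\mathrm{red}})^\infty$ and weak F-nilpotence is insensitive to nilpotents.

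For the forward direction, assume $A$ is complete and weakly $F$-nilpotent. By \cite[Proposition 12.22]{MP}, as already used in the proof of Proposition \ref{5.4}, $A^\infty$ is balanced big Cohen--Macaulay, so $H^i_\fm(A^\infty)=0$ for $i\neq \dim A$. By Cohen's structure theorem write $A\cong R/I$ with $R$ a complete regular local ring. Then argue exactly as in Proposition \ref{4.5}. The vanishing of $H^i_\fm(A^\infty)$ off $\dim A$ is condition (2) of \cite[Example 4.38]{topo}, and it is equivalent to (6), namely $H^i_I(R)=0$ for $i\neq \operatorname{ht}(I)$. Since $R$ is regular, hence Cohen--Macaulay, $\operatorname{ht}(I)=\grade(I,R)$, so $I$ is cohomologically complete-intersection. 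The only new input compared with Proposition \ref{4.5} is replacing $F$-coherence by weak $F$-nilpotence when producing big Cohen--Macaulayness of $A^\infty$.

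For the converse, start from $A\cong R/I$ with $H^i_I(R)=0$ for $i\neq\operatorname{ht}(I)$. Read the equivalence $(2)\equiv(6)$ of \cite[Example 4.38]{topo} backwards to get $H^i_\fm(A^\infty)=0$ for $i<\dim A$. The mechanism behind it is Lyubeznik/Peskine--Szpiro-type duality. One has $H^i_\fm(A^\infty)=\varinjlim_F H^i_\fm(A)$, and by local duality over $R$ together with the Frobenius description of $H^{\bullet}_I(R)$ as $\varinjlim_e \Ext^{\bullet}_R(R/I^{[p^e]},R)$, the colimit vanishing is dual to the vanishing of the corresponding $H^{n-i}_I(R)$. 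It remains to show that $\varinjlim_F H^i_\fm(A)=0$ means exactly that every element of $H^i_\fm(A)$ is killed by some Frobenius iterate. This is because the colimit is along Frobenius: an element dies in the colimit iff some $F^e$ kills it. Hence $A$ is weakly $F$-nilpotent.

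The main obstacle is the converse. We must be sure that the equivalence in \cite{topo} really applies in the needed generality: complete, possibly non-domain, and with $\operatorname{ht}$ versus $\dim A$ matching, which needs $A$ equidimensional. This should follow from the vanishing $H^i_I(R)=0$ for $i<\operatorname{ht}(I)$ and the nonvanishing of $H^{\operatorname{ht}\mathfrak p}_I(R)$ for minimal primes $\mathfrak p$. If the citation falls short, I would prove the duality $\varinjlim_F H^i_\fm(R/I)\cong H^{n-i}_I(R)^\vee$ directly, where $n=\dim R$. The ingredients are local duality $H^i_\fm(R/I^{[q]})^\vee\cong \Ext^{n-i}_R(R/I^{[q]},R)$, flatness of Frobenius on $R$ (Kunz), and the fact that $\{I^{[q]}\}$ is cofinal with $\{I^k\}$.

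Remark \ref{46} shows that this characterization genuinely differs from $F$-coherence.
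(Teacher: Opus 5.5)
Your main line is the paper's proof. The paper just reruns the proof of Proposition~\ref{4.5} with \cite[Proposition 12.22]{MP} in place of $F$-coherence, so both directions go through the vanishing of $H^i_{\fm}(A^\infty)$ for $i\neq\dim A$ and the equivalence $(2)\equiv(6)$ of \cite[Example 4.38]{topo}. Your reduction to the reduced case, and your remark that $H^i_{\fm}(A^\infty)=\varinjlim_F H^i_{\fm}(A)$ vanishes exactly when every class is killed by some Frobenius power, are correct and make explicit points the paper leaves implicit.

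The fallback you offer for the converse would fail as stated, because there is no isomorphism $\varinjlim_F H^i_{\fm}(R/I)\cong H^{n-i}_I(R)^\vee$.

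\emph{Why it fails.} Matlis duality turns the direct limit $H^{n-i}_I(R)=\varinjlim_e\Ext^{n-i}_R(R/I^{[q]},R)$ into an inverse limit. Your ingredients therefore give $H^{n-i}_I(R)^\vee\cong\varprojlim_e H^i_{\fm}(R/I^{[q]})$. The terms $H^i_{\fm}(R/I^{[q]})$ are the Frobenius base changes of $H^i_{\fm}(A)$, not copies of it. Already for $I=0$ and $i=n\geq 1$ the two sides differ. On one side, $H^0_{(0)}(R)^\vee=E_R(k)$ is Artinian. On the other, $\varinjlim_F H^n_{\fm}(R)=H^n_{\fm}(R^\infty)$ contains $R^\infty/\fm R^\infty$, an infinite-dimensional $k$-vector space killed by $\fm$.

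\emph{What is true.} Only the equivalence of the two vanishings holds, and that is Lyubeznik's theorem. Since $\Ext^{n-i}_R(A,R)$ is finitely generated and the system arises by iterating the Frobenius functor, $H^{n-i}_I(R)=0$ if and only if $\Ext^{n-i}_R(A,R)\to\Ext^{n-i}_R(R/I^{[q]},R)$ is zero for some $q=p^e$. Dually, this holds if and only if $H^i_{\fm}(R/I^{[q]})\to H^i_{\fm}(A)$ is zero. Since that map's image is the $R$-span of $F^e(H^i_{\fm}(A))$, this means $F^e$ kills all of $H^i_{\fm}(A)$. This settles your converse directly, because uniform nilpotence trivially implies elementwise nilpotence. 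Making the forward direction self-contained by the same route would also need the Hartshorne--Speiser--Lyubeznik theorem, to upgrade elementwise nilpotence to uniform nilpotence.

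Finally, your equidimensionality worry is unnecessary. In a regular local ring $\operatorname{ht}(I)+\dim R/I=\dim R$ for every ideal $I$, so $i<\dim A$ corresponds exactly to $n-i>\operatorname{ht}(I)$.
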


\begin{proof}
	This follows from the proof of Proposition \ref{4.5}.
\end{proof}

We refer to Remark \ref{46} and Corollary \ref{59} to see an explicit difference between \(F\)-coherent rings and weakly \(F\)-nilpotent rings.
Here is another.

\begin{proposition}\label{obs:limits}
	Let \(\kappa\) be a limit ordinal such that \(\{(R_\gamma, \mathfrak m_\gamma)\}_{\gamma < \kappa}\) is a filtered direct system of weakly \(F\)-nilpotent rings, with \(\mathfrak m_\gamma R_\delta = \mathfrak m_\delta\) for all \(\gamma < \delta < \kappa\). Then \(\varinjlim_{\gamma} R_\gamma\) is weakly \(F\)-nilpotent.
\end{proposition}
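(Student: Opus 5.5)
Set $R:=\varinjlim_\gamma R_\gamma$ and $\mathfrak m:=\varinjlim_\gamma \mathfrak m_\gamma$. First I record what $R$ looks like. Since the system is filtered and each $\mathfrak m_\gamma$ is the unique maximal ideal of $R_\gamma$, the ring $R$ is local with maximal ideal $\mathfrak m$: an element of $R$ outside $\mathfrak m$ is represented by a unit of some $R_\gamma$. The compatibility $\mathfrak m_\gamma R_\delta=\mathfrak m_\delta$ has a useful consequence. If $\underline{x}=x_1,\ldots,x_s$ generates $\mathfrak m_{\gamma_0}$ up to radical, then its image generates $\mathfrak m_\delta$ up to radical for every $\delta\ge\gamma_0$, and also $\mathfrak m$. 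So a single finite sequence computes local cohomology everywhere: the \v{C}ech complex gives $H^i_{\mathfrak m_\delta}(R_\delta)=H^i(\check C(\underline{x};R_\delta))$ and $H^i_{\mathfrak m}(R)=H^i(\check C(\underline{x};R))$. Since \v{C}ech complexes and cohomology commute with filtered colimits, I get
\[
H^i_{\mathfrak m}(R)\cong \varinjlim_{\delta}H^i_{\mathfrak m_\delta}(R_\delta),
\]
and this isomorphism is compatible with the Frobenius actions, because Frobenius acts termwise on the \v{C}ech complexes.

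Next I deal with dimensions. Weak $F$-nilpotence refers to the indices $i<\dim(R)$, so I have to compare $\dim(R_\delta)$ with whatever replaces $\dim(R)$ for the limit. Since $\underline{x}$ generates each $\mathfrak m_\delta$ up to radical, $\dim R_\delta\le s$ for all $\delta$. In the noetherian setting I would show that $\dim R_\delta$ is eventually constant, say equal to $d$. For the (possibly non-noetherian) limit, I interpret the index bound through local cohomology, as the definition effectively does: take $d$ to be the largest $i$ with $H^i_{\mathfrak m}(R)\neq 0$, or equivalently the stable value of $\dim R_\delta$. Dimensions are nondecreasing along the system, since $\mathfrak m_\gamma R_\delta=\mathfrak m_\delta$ and $H^{\dim R_\gamma}$ survives the transition. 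They are also bounded by $s$, so they stabilize beyond some $\gamma_1$.

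With this in place the proof is direct. Let $\eta\in H^i_{\mathfrak m}(R)$ with $i<d$. By the colimit isomorphism, $\eta$ is the image of some $\eta_\delta\in H^i_{\mathfrak m_\delta}(R_\delta)$, and I may take $\delta\ge\gamma_1$, so that $i<\dim R_\delta$. Because $R_\delta$ is weakly $F$-nilpotent, $\F^n(\eta_\delta)=0$ for some $n$. Frobenius commutes with the transition maps, so $\F^n(\eta)=0$ as well. Hence $R$ is weakly $F$-nilpotent.

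The main obstacle is the dimension step: showing that the relevant dimensions stabilize, in particular that the top nonvanishing local cohomology index does not drop along the system. I would first try the \v{C}ech description. The top cohomology $H^s(\check C(\underline{x};-))$ is right exact, which lets me compare nonvanishing across $\gamma<\delta$. If that argument fails, I would simply read the definition of weakly $F$-nilpotent for the limit with $\dim$ meaning $\sup_\delta\dim R_\delta$. The colimit argument above then goes through verbatim.
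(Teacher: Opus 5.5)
Your core argument is the same as the paper's. You use the Frobenius-compatible isomorphism $H^i_{\mathfrak m}(R)\cong\varinjlim_\delta H^i_{\mathfrak m_\delta}(R_\delta)$, lift $\eta$ to some $R_\delta$, kill it there with a Frobenius power, and push forward. The paper phrases this as $H^i_{\mathfrak m}(R^\infty)\cong\varinjlim H^i_{\mathfrak m_\gamma}(R_\gamma^\infty)=0$, which is equivalent. The gap is in the dimension step, and it is a genuine one.

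First, your monotonicity claim is backwards. For a local map $R_\gamma\to R_\delta$ of noetherian local rings, Matsumura's Theorem 15.1(i) needs no flatness. With $\mathfrak m_\gamma R_\delta=\mathfrak m_\delta$ it gives $\dim R_\delta\le\dim R_\gamma+\dim R_\delta/\mathfrak m_\gamma R_\delta=\dim R_\gamma$, so dimensions are \emph{non-increasing}. Top local cohomology also need not survive. For example, $k[[x,y,z]]\to k[[x,y]]\to k[[x,y]]\to\cdots$ (kill $z$, then identities) satisfies all the hypotheses, and $H^3$ dies at the first step.

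For the same reason, your fallback of reading $\dim$ as $\sup_\delta\dim R_\delta$ does not let the argument go through; it makes the statement false. In this system every ring is regular, so every ring is weakly $F$-nilpotent. Yet the $\sup$ reading would demand that Frobenius be nilpotent on $H^2_{\mathfrak m}(k[[x,y]])$, where it is injective.

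Second, replacing $\dim R$ by ``the largest $i$ with $H^i_{\mathfrak m}(R)\neq 0$'' changes the statement unless you know $R$ is noetherian. In that case Grothendieck's vanishing and non-vanishing theorems identify the two quantities. You explicitly avoid proving this, and the claimed equality with the stable value of $\dim R_\delta$ is never justified.

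The missing ingredient is that $R=\varinjlim R_\gamma$ is a noetherian local ring. The paper obtains this from Ogoma's theorem on inductive limits of noetherian local rings with $\mathfrak m_\gamma R_\delta=\mathfrak m_\delta$. Once you have it, what you actually need is $\dim R\le\dim R_\delta$ for all sufficiently large $\delta$. The paper gets this from Matsumura 15.1(i) applied to $R_\gamma\to R$, using $\mathfrak m_\gamma R=\mathfrak m$.

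You could also get it inside your own framework, without any monotonicity. By Grothendieck's non-vanishing theorem, $H^{\dim R}_{\mathfrak m}(R)\neq0$. Since this is $\varinjlim_\delta H^{\dim R}_{\mathfrak m_\delta}(R_\delta)$, some element survives to the colimit. Then $H^{\dim R}_{\mathfrak m_\delta}(R_\delta)\neq 0$ for all $\delta$ beyond some $\delta_0$, so $\dim R\le\dim R_\delta$ there by Grothendieck vanishing. When lifting $\eta\in H^i_{\mathfrak m}(R)$ with $i<\dim R$, choose $\delta\ge\delta_0$. This gives $i<\dim R_\delta$, and the rest of your proof is then correct.
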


\begin{proof}
	Let \(R := \varinjlim_{\gamma} R_\gamma\). By the main result of \cite{O}, \(R\) is Noetherian. Also, \(R\) is local with unique maximal ideal \(\mathfrak m = \varinjlim_{\gamma} \mathfrak m_\gamma\). Suppose \(\mathfrak m\) is generated by \(\underline{x} := x_1, \ldots, x_t\). Then there exists \(\lambda < \kappa\) such that \(x_i \in R_\gamma\) for all \(i\) and all \(\gamma \ge \lambda\). Moreover, \(\mathfrak m = \mathfrak m_\gamma R\) for all \(\gamma \ge \lambda\).
	In view of \cite[Theorem 15.1]{Mat}, we have
	\[
	\dim (R )\le \dim (R_\gamma) + \dim (R / \mathfrak m_\gamma R).
	\]
	Since \(\mathfrak m = \mathfrak m_\gamma R\), we have \(\dim (R / \mathfrak m_\gamma R) = 0\), and therefore \(\dim (R) \le \dim (R_\gamma)\).
	Let \(i < \dim R\). Now,
	\(
	R^\infty \cong \varinjlim_{\gamma < \kappa} (R_\gamma)^\infty,
	\)
	and so
	\[
	\begin{aligned}
	H_{\mathfrak m}^{i}(R^\infty)
	 \cong H_{\underline{x}R}^{i}(R^\infty) 
	&\cong H_{\underline{x}R}^{i}\left(\varinjlim_{\lambda \le \gamma < \kappa} (R_\gamma)^\infty\right) \\
	&\cong \varinjlim_{\lambda \le \gamma < \kappa} H_{\underline{x}}^{i}(R_\gamma^\infty)\cong \varinjlim_{\gamma < \kappa} H_{\mathfrak m_\gamma}^{i}(R_\gamma^\infty) = 0,
	\end{aligned}
	\]where $H_{\underline{x}}^{i}(-)$ stands for the $i$-th local cohomology with respect to $\underline{x}$.
	Thus \(R^\infty\) is big Cohen--Macaulay, and so \(R\) is weakly \(F\)-nilpotent (see \cite[Proposition 12.22]{MP}).
\end{proof}\begin{proposition}\label{obs:limits}
Let \(\kappa\) be a limit ordinal such that \(\{(R_i, \mathfrak m_i);f_{i,j}:R_i\to R_j\}_{i < \kappa}\) is a filtered direct system of  \(F\)-coherent rings, with \(\mathfrak m_\gamma R_\delta = \mathfrak m_\delta\) for all \(\gamma < \delta < \kappa\) and $f_{i,j}$ is  weakly \'etale. Then \(\varinjlim_{i} R_i\) is \(F\)-coherent.
\end{proposition}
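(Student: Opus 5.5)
Set \(R:=\varinjlim_i R_i\). As in the previous proposition, the main result of \cite{O} shows that \(R\) is Noetherian local with maximal ideal \(\mathfrak m=\varinjlim_i\mathfrak m_i\). The plan is to show that \(R^\infty\) is a filtered colimit of the coherent rings \(R_i^\infty\) with flat transition maps, and then apply the standard fact that such a colimit is again coherent (see \cite{G1}).

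\textbf{Step 1: \(R^\infty\) as a colimit.} Perfect closure commutes with filtered colimits, because both are colimits and colimits commute. Hence
\[
R^\infty\cong\varinjlim_i R_i^\infty .
\]

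\textbf{Step 2: the transition maps are flat, indeed weakly \'etale.} As in the proof of Corollary \ref{4.4}, since \(f_{i,j}\) is weakly \'etale we have \(R_j^{1/p^n}\simeq R_j\otimes_{R_i}R_i^{1/p^n}\). The reason is that Frobenius commutes with weakly \'etale base change (the relative Frobenius of a weakly \'etale map is an isomorphism, by Bhatt--Scholze / Gabber). Passing to the limit in \(n\) gives
\[
R_j^\infty\cong R_j\otimes_{R_i}R_i^\infty .
\]
So \(R_i^\infty\to R_j^\infty\) is a base change of the weakly \'etale map \(f_{i,j}\). It is therefore weakly \'etale, and in particular flat.

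\textbf{Step 3: the colimit is coherent.} Let \(J\) be a finitely generated ideal of \(R^\infty\). Its generators come from some \(R_i^\infty\), so \(J=J_iR^\infty\) for a finitely generated ideal \(J_i\subseteq R_i^\infty\). Since \(R_i\) is \(F\)-coherent, \(J_i\) is finitely presented: there is an exact sequence
\[
(R_i^\infty)^{m}\to(R_i^\infty)^{n}\to J_i\to 0 .
\]
The map \(R_i^\infty\to R^\infty\) is a filtered colimit of the flat maps \(R_i^\infty\to R_j^\infty\), hence flat. Therefore \(J_i\otimes_{R_i^\infty}R^\infty\cong J_iR^\infty=J\), and tensoring the presentation gives a finite presentation of \(J\). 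Thus \(R^\infty\) is coherent, i.e. \(R\) is \(F\)-coherent.

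\textbf{Main obstacle.} The key point is the base change identity in Step 2. This is where the weakly \'etale hypothesis is essential; mere flatness would not suffice. I would cite the Gabber / Bhatt--Scholze result that a weakly \'etale map \(A\to B\) in characteristic \(p\) has relative Frobenius \(B\otimes_{A,F}A\to B\) an isomorphism, exactly as it is used in Corollary \ref{4.4}.

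The hypothesis \(\mathfrak m_\gamma R_\delta=\mathfrak m_\delta\) is needed only to ensure that \(R\) is a Noetherian local ring.
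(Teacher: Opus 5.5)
Your proposal is correct and follows the paper's proof. Like the paper, you use weak \'etaleness to get $R_j^\infty\cong R_j\otimes_{R_i}R_i^\infty$, so that the transition maps $R_i^\infty\to R_j^\infty$ are flat base changes, and you write $R^\infty=\varinjlim_i R_i^\infty$; the only differences are that you prove the ``flat colimit of coherent rings is coherent'' step by hand where the paper cites \cite[Theorem 2.3.3]{G1}, and that the Noetherianity of $R$ from \cite{O} is not actually needed.
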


\begin{proof}	Let \(R := \varinjlim_{i} R_\gamma\).
It follows by  change of coefficient (see \cite[page 46]{Mat}) along with the weakly \'etaleness that the following map \[f_{i,j}^\infty:(R_i)^\infty\stackrel{\cong}\lo R_i   \otimes_{R_i} {(R_i)^\infty}\stackrel{f_{i,j}\otimes1}\lo R_j\otimes_{R_i}(R_i)^\infty \stackrel{\cong}\lo(R_j)^\infty,\]
is flat. Since $(R)^\infty=\varinjlim_{i} \{(R_i)^\infty;f_{i,j}^\infty\}$
it follows from \cite[Theorem 2.3.3]{G1} that $R$ is \(F\)-coherent.
\end{proof}
\maketitle

\section{A conjecture on $\mu_i(R^\infty)=0$}

Recall that it was conjectured in  \cite[Conjecture 5.19]{Elham} that:

\begin{conjecture}\label{61}
Suppose \(
\operatorname{Ext}^{i }_{R}(k,R^\infty)=0\) for some $i>\dim(R)$.
Then	$R$ is  regular.

\end{conjecture} 
Mahdavi \cite{Elham} showed that the condition $i > \dim(R)$ is genuinely necessary, and established this over rings with isolated singularities. 

The following fact is well known.

\begin{fact}\label{comuf}(See \cite[Theorem~2.6.2]{G1}).
	Let $A$ be a commutative ring. Then following are equivalent:
	\begin{enumerate}
		\item[(i)]
		\(A \) is coherent;
		
		\item[(ii)] For all finitely presented  module $P$, any directed system $\{M_i\}$ of $A$-modules and all $n\geq 1$, we have $$\Ext^n_A(P,\vil M_i)\cong\vil\Ext^n_A(P,  M_i).$$
	\end{enumerate}
\end{fact}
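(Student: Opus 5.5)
The plan is to prove the two implications separately, using the standard machinery of finitely generated free resolutions in one direction and the five lemma in the other.

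\emph{(i)$\Rightarrow$(ii).} Since $A$ is coherent and $P$ is finitely presented, the kernel of any surjection $A^{n_0}\to P$ is again finitely presented. Iterating this gives a resolution $F_\bullet\to P$ in which every $F_j$ is finitely generated and free. For a finitely generated free module $F$ there is a natural isomorphism
\[
\operatorname{Hom}_A\bigl(F,\varinjlim M_i\bigr)\cong\varinjlim \operatorname{Hom}_A(F,M_i),
\]
because $\operatorname{Hom}_A(A^m,-)=(-)^m$ commutes with direct limits. Applying this degreewise gives an isomorphism of complexes $\operatorname{Hom}_A(F_\bullet,\varinjlim M_i)\cong\varinjlim\operatorname{Hom}_A(F_\bullet,M_i)$. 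Filtered direct limits are exact, so they commute with taking cohomology. This yields $\operatorname{Ext}^n_A(P,\varinjlim M_i)\cong\varinjlim\operatorname{Ext}^n_A(P,M_i)$ for every $n$, in particular for $n\geq1$.

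\emph{(ii)$\Rightarrow$(i).} It suffices to show that every finitely generated ideal $I$ is finitely presented; only the case $n=1$ of the hypothesis will be used. Put $P=A/I$, which is finitely presented. For any module $M$ there is an exact sequence
\[
0\to\operatorname{Hom}_A(A/I,M)\to M\to\operatorname{Hom}_A(I,M)\to\operatorname{Ext}^1_A(A/I,M)\to0 .
\]
Take this sequence for each $M_i$ and pass to the direct limit, which remains exact. Then compare it with the sequence for $\varinjlim M_i$ via the canonical natural maps $\varinjlim\operatorname{Hom}_A(-,M_i)\to\operatorname{Hom}_A(-,\varinjlim M_i)$ and their $\operatorname{Ext}^1$ analogues. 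The map on $\operatorname{Hom}_A(A/I,-)$ is an isomorphism because $A/I$ is finitely presented. The map on the middle term $M$ is the identity of $\varinjlim M_i$. The map on $\operatorname{Ext}^1$ is an isomorphism by (ii). The rightmost terms are followed by zeros on both rows. The five lemma then shows that
\[
\varinjlim\operatorname{Hom}_A(I,M_i)\to\operatorname{Hom}_A\bigl(I,\varinjlim M_i\bigr)
\]
is an isomorphism, so $\operatorname{Hom}_A(I,-)$ commutes with direct limits. The step that needs the most care is checking that the comparison maps are natural and make the ladder commute; this is routine but must be verified.

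The last step, which I expect to be the main conceptual point, is to show that a finitely generated module $I$ for which $\operatorname{Hom}_A(I,-)$ commutes with direct limits is finitely presented. To do this, choose a surjection $A^m\to I$ with kernel $K$. Write $K$ as the directed union of its finitely generated submodules $K_\lambda$. Then $I=\varinjlim_\lambda A^m/K_\lambda$, and each $A^m/K_\lambda$ is finitely presented. The element $\mathrm{id}_I\in\operatorname{Hom}_A(I,\varinjlim A^m/K_\lambda)$ comes from some $\operatorname{Hom}_A(I,A^m/K_\lambda)$. This gives a map $s\colon I\to A^m/K_\lambda$ whose composite with the projection $A^m/K_\lambda\to I$ equals $\mathrm{id}_I$. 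Hence $I$ is a direct summand of the finitely presented module $A^m/K_\lambda$. Its complement is then a quotient of a finitely presented module that is also a direct summand of it, hence finitely generated. It follows that $K/K_\lambda$ is finitely generated, so $K$ is finitely generated and $I$ is finitely presented. Therefore $A$ is coherent.
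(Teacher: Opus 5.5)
Your proof is correct. The paper gives no argument of its own for this Fact; it only refers to Glaz's Theorem 2.6.2. What you wrote is the standard proof of that result:
\begin{itemize}
\item for (i)$\Rightarrow$(ii), finitely generated free resolutions together with the exactness of filtered colimits;
\item for (ii)$\Rightarrow$(i), the four-term sequence coming from $0\to I\to A\to A/I\to 0$, the five lemma, and the direct-summand argument showing that a finitely generated module $I$ with $\operatorname{Hom}_A(I,-)$ commuting with directed limits is finitely presented.
\end{itemize}
One point should be stated explicitly: you must read (ii) as asserting that the canonical map $\varinjlim\operatorname{Ext}^1_A(P,M_i)\to\operatorname{Ext}^1_A(P,\varinjlim M_i)$ is an isomorphism. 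This is the intended meaning, and an abstract isomorphism would not feed into your five-lemma ladder.
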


In particular, Corollary \ref{4.3} extends as follows.

\begin{theorem}
	Let $(R,\mathfrak m)$ be a local ring of characteristic $p>0$. 
	Assume that
	\(
	\operatorname{Ext}^{i}_{R}\left(\frac{R}{\mathfrak m},R^{\infty}\right)=0
	\)
	for some $i>\dim(R)$. Then $R$ is regular.
\end{theorem}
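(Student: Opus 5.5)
The plan is to reduce the single vanishing $\operatorname{Ext}^i_R(k,R^\infty)=0$, with $i>\dim R$ and $k=R/\mathfrak m$, to a statement about the Frobenius functor applied to a minimal free resolution of $k$. There are three steps: a limit description of the Ext, a Koh--Lee type argument giving finite flat dimension of $k$, and a reduction to the complete, F-finite case. I expect the Koh--Lee step to be the main obstacle.

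\emph{Step 1 (passing to the limit).} Since $R$ is noetherian it is coherent and $k$ is finitely presented, so Fact~\ref{comuf} gives
\[
\operatorname{Ext}^i_R(k,R^\infty)\cong \varinjlim_e \operatorname{Ext}^i_R(k,{}^eR).
\]
The transition maps are induced by Frobenius. Let $P_\bullet\to k$ be a minimal free resolution. Then $\operatorname{Hom}_R(P_\bullet,{}^eR)$ is, up to restriction of scalars along $\mathrm F^e$, the complex $\operatorname{Hom}_R(\mathrm F^e(P_\bullet),R)$. Here $\mathrm F^e(P_\bullet)$ is the Peskine--Szpiro complex, which has the same free modules as $P_\bullet$ and whose differentials are the matrices of $P_\bullet$ with entries raised to the $p^e$-th power. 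So
\[
\operatorname{Ext}^i_R(k,{}^eR)\cong H^i\bigl(\operatorname{Hom}_R(\mathrm F^e(P_\bullet),R)\bigr),
\]
and the directed system is the one induced by the comparison maps $\mathrm F^e(P_\bullet)\to\mathrm F^{e+1}(P_\bullet)$. Vanishing of the colimit then says: every cocycle in degree $i$ of $\operatorname{Hom}(\mathrm F^e(P_\bullet),R)$ becomes a coboundary after finitely many further Frobenius twists.

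\emph{Step 2 (from vanishing to finite flat dimension).} Next I would exploit minimality of $P_\bullet$ together with the hypothesis $i>\dim R\ge\operatorname{depth}R$. The tool is the Koh--Lee estimate: for $e\gg0$ the entries of the differentials of $\mathrm F^e(P_\bullet)$ lie in $\mathfrak m^{[p^e]}$, which is contained in arbitrarily high powers of $\mathfrak m$. Combined with the Artin--Rees type argument used by Koh--Lee, this should show the following. If the class of a minimal generator of $\operatorname{Hom}(P_i,R)$ modulo coboundaries is killed in the colimit, then the $i$-th differential of $P_\bullet$ must split off a free summand, unless $\beta_i(k)=0$. Since $P_\bullet$ is minimal, this forces $\beta_i(k)=0$, hence $\operatorname{pd}_R k<\infty$ and $R$ is regular by Auslander--Buchsbaum--Serre.

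\emph{Step 3 (reductions and an alternative route).} To make Step~2 effective, I would first reduce to the case where $R$ is complete with an algebraically closed (hence perfect) residue field, so that $R$ is F-finite. This passes through a gonflement/completion, which is flat with regular closed fibre, so it preserves both the hypothesis (flat base change of $\operatorname{Ext}$ from the finitely presented $k$, using that $R^\infty$ behaves well under such base change) and regularity. In the F-finite case each ${}^eR$ is finitely generated, which makes the Koh--Lee bounds uniform in $e$.

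Alternatively, one could first try to upgrade the single vanishing to vanishing for all $j\ge i$, and then run the argument of Proposition~\ref{47} and Corollary~\ref{4.3}. That argument uses the spectral sequence with $L=R^\infty/\mathfrak mR^\infty$, which is $k$-isomorphic to a sum of copies of $k$, together with $\operatorname{gldim}R^\infty<\infty$ from \cite{BS1}, to deduce $\operatorname{id}_R k<\infty$.

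\emph{Main obstacle.} The hard point is Step~2. The Koh--Lee rigidity is classically stated for $\operatorname{Tor}$ and for a single large $e$, not for colimits of $\operatorname{Ext}$ into twisted copies of $R$. I would first try to Matlis-dualize, using Fact~\ref{fact:matlis-duality}, to turn $\operatorname{Ext}^i_R(k,{}^eR)$ into $\operatorname{Tor}_i^R(k,({}^eR)^\vee)$. Then I would apply the Koh--Lee / Aberbach--Li type criterion for $\operatorname{Tor}$ against Frobenius-twisted modules, using $i>\dim R$ to rule out the depth-sensitive obstructions.
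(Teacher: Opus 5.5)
Your proposal has a genuine gap, and it sits at Step~2, the point you flag yourself. No Koh--Lee type argument delivers it. Koh--Lee criteria need a finitely generated module $M$ with $\operatorname{Tor}^R_j(M,{}^eR)=0$ for one fixed, sufficiently large $e$, and on a block of $\operatorname{depth}R+1$ consecutive indices $j$. You have vanishing at a single index, and only of the colimit $\varinjlim_e\operatorname{Ext}^i_R(k,{}^eR)$. Since the Frobenius-induced transition maps need not be injective, this says nothing a priori about any individual $\operatorname{Ext}^i_R(k,{}^eR)$. So the Peskine--Szpiro description from Step~1 gives you no handle on $\beta_i(k)$. The claim that a minimal generator dying in the colimit forces a free summand to split off is unsupported.

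Matlis duality does not rescue this. It turns the hypothesis into $\operatorname{Tor}^R_i(k,(R^\infty)^\vee)=\varprojlim_e\operatorname{Tor}^R_i(k,({}^eR)^\vee)=0$. That is an inverse limit against modules that are not finitely generated, not the colimit $\operatorname{Tor}^R_i(k,R^\infty)$ that the Aberbach--Li result controls. Tellingly, $i>\dim R$ plays no identifiable role in your Step~2, although it is the hypothesis everything hinges on.

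Step~3 also fails as stated. Flat base change gives $\operatorname{Ext}^i_S(S/\mathfrak mS,S\otimes_RR^\infty)=0$, but $S\otimes_RR^\infty$ is in general not $S^\infty$. If $k$ is imperfect and you pass to $\bar k$, it is not even reduced, so the hypothesis does not transfer.

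The missing ingredient is rigidity of $\operatorname{Ext}^*_R(k,M)$ for an \emph{arbitrary}, not necessarily finitely generated, module $M$. This is what the paper uses, via \cite[Theorem 1.2]{IYENGAR}: if $\operatorname{Ext}^n_R(k,M)=0$ for one $n>\dim R$, then $\operatorname{Ext}^j_R(k,M)=0$ for all $j\ge n$. Your alternative route asks for exactly this upgrade without supplying it, and it is precisely where $i>\dim R$ enters. Once you have it for $M=R^\infty$, the rest is formal:
\begin{enumerate}
\item By Fact~\ref{comuf}, $\operatorname{Ext}^j_R(k,-)$ commutes with direct sums and direct limits, so it vanishes on every free (indeed every flat) $R^\infty$-module for $j\ge i$.
\item Take a finite free resolution of $R^\infty/\mathfrak mR^\infty$ over $R^\infty$, available from finite global dimension of $R^\infty$ as in the paper; a finite flat resolution would serve equally well. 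Dimension-shifting along it gives $\operatorname{Ext}^j_R(k,R^\infty/\mathfrak mR^\infty)=0$ for $j\ge i$.
\item Since $R^\infty/\mathfrak mR^\infty$ is a nonzero direct sum of copies of $k$, this yields $\operatorname{Ext}^i_R(k,k)=0$, i.e.\ $\beta_i(k)=0$, so $R$ is regular.
\end{enumerate}

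Do not route this through Proposition~\ref{47}. Its spectral sequence concerns $\operatorname{Ext}_R(R^\infty,R^\infty)$, not $\operatorname{Ext}_R(k,-)$, and that proposition is itself deduced from the theorem you are proving. The right template is the dimension-shifting in Theorem~\ref{3.1} and Corollary~\ref{4.3}.
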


\begin{proof}
Recall from \cite{BH} that the global dimension of \(R^\infty\) is finite.
In particular, we have an \(R^\infty\)-resolution:
\[
0 \longrightarrow \oplus_{X_t}  R^\infty   \stackrel{f_t}\longrightarrow \cdots \longrightarrow \oplus_{X_1} R^\infty  \stackrel{f_1}\longrightarrow R^\infty\stackrel{f_0} \longrightarrow R^\infty / \mathfrak m R^\infty \longrightarrow 0,
\]
where the \(X_i\) are sets non necessarily finite. Since \(R \subseteq R^\infty\), the above exact sequence is indeed \(R\)-linear.
Let $\Syz_j$ denote the $j$-th syzygy in this resolution of $R^\infty / \mathfrak m R^\infty$. Then the exact sequence
\[
0\longrightarrow \oplus_{X_t} R^\infty  
\longrightarrow \oplus_{X_{t-1}} R^\infty 
\longrightarrow \Syz_{\ell-1}
\longrightarrow 0
\]
induces a long exact sequence
\[
\operatorname{Ext}^{i}_{R}
(k,\oplus_{X_{t-1}} R^\infty)
\longrightarrow
\operatorname{Ext}^{i}_{R}(k,\Syz_{\ell-1})  \longrightarrow
\operatorname{Ext}^{i+1}_{R}
(k,\oplus_{X_t} R^\infty).
\]
Because $F_{t-1}:=\oplus_{X_{t-1}} R^\infty $ is free, and recall that direct sum is particularly a directed limit, then we have
\[
\operatorname{Ext}^{i}_{R}(k,F_{t-1})=\vil 	\operatorname{Ext}^{i}_{R}(k,R^\infty) =0
\qquad (\ast),
\] because 
 $R$  is coherent, $i>0$ and $k$
is  finitely presented (see {Fact} \ref{comuf}).
Hence the vanishing of $(\ast)$
implies
\[
\operatorname{Ext}^{i+1}_{R}
(k,\oplus_{X_t} R^\infty)=\vil	\operatorname{Ext}^{i+1}_{R}(k,R^\infty) =0,
\]here we need to assume that $i>\dim(R)$, see \cite[Theorem 1.2]{IYENGAR}.
From this,
\(
\operatorname{Ext}^{i}_{R}(k,\Omega_{\ell-1})=0 .
\)
Thus, for every $j$,
	there is a short exact sequence
	\[
	0\longrightarrow \Syz_{j}
	\longrightarrow F_{j-1}
	\longrightarrow \Syz_{j-1}
	\longrightarrow 0 .
	\]
	Hence by induction the vanishing of
	\(
	\operatorname{Ext}^{i}_{R}(k,\Syz_j)
	=0\)
	implies
	\(
	\operatorname{Ext}^{i+1}_{R}(k,\Syz_{j-1})=0 ,
	\)
	 in particular
	\(
	\operatorname{Ext}^{i}_{R}(k,k)=0 .
	\)
By Roberts' theorem (see \cite[Theorem 2]{Ro2}), we know $\mu^n(\mathfrak m,k)$ is nonzero only in the range
$
\bigl[\operatorname{depth}_R(k),\,\operatorname{id}_R(k)\bigr].	$ Therefore,
	\(
	\operatorname{id}_{R}(k)<\infty .
	\) Hence $R$ is a regular local ring.
\end{proof}

The following drops the isolated singularity from \cite[Theorem 3.15(4)]{Ryo} when we work with minimal perfect closure: 
\begin{corollary}\label{63}
Suppose 	\(
	\operatorname{Ext}^{+}_{R}(R^\infty,R^\infty)=0\).
Then 	$R$ is regular.
\end{corollary}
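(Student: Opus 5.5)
The plan is to reduce Corollary \ref{63} to the preceding theorem, which says that $\operatorname{Ext}^{i}_{R}(R/\mathfrak m,R^\infty)=0$ for a single $i>\dim(R)$ forces $R$ to be regular. This is the same reduction carried out in Proposition \ref{47}, which needed a positive answer to Question 2). That theorem now provides one.

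The first step is a change-of-rings spectral sequence, as in the $(1)\Rightarrow(2)$ part of the first proposition of Section 2:
\[
E_2^{pq}=\operatorname{Ext}^{p}_{R^\infty}\bigl(L,\operatorname{Ext}^{q}_{R}(R^\infty,R^\infty)\bigr)\Longrightarrow \operatorname{Ext}^{p+q}_{R}(L,R^\infty),
\]
valid for every $R^\infty$-module $L$ (cf. \cite[Theorem 10.74]{Rot}). By hypothesis $\operatorname{Ext}^{q}_{R}(R^\infty,R^\infty)=0$ for all $q>0$, so the spectral sequence collapses onto the row $q=0$. This gives, for every $p$,
\[
\operatorname{Ext}^{p}_{R^\infty}\bigl(L,\operatorname{Hom}_R(R^\infty,R^\infty)\bigr)\cong \operatorname{Ext}^{p}_{R}(L,R^\infty).
\]

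Next, take $L:=R^\infty/\mathfrak m R^\infty$. By Bhatt--Scholze \cite{BS1}, $g:=\operatorname{gldim}(R^\infty)<\infty$, so the left-hand side vanishes for all $p>g$. Since $\mathfrak m L=0$, $L$ is a $k$-vector space, so $L\cong\bigoplus_X k$ as $R$-modules for some index set $X$. It is nonzero, because $\mathfrak m R^\infty\neq R^\infty$ (the extension $R\to R^\infty$ is integral). Hence
\[
\operatorname{Ext}^{p}_{R}(L,R^\infty)\cong\prod_X\operatorname{Ext}^{p}_{R}(k,R^\infty).
\]
Because $X\neq\emptyset$, we get $\operatorname{Ext}^{p}_{R}(k,R^\infty)=0$ for all $p>g$.

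Finally, choose any single $i>\max\{g,\dim(R)\}$. The vanishing $\operatorname{Ext}^{i}_{R}(R/\mathfrak m,R^\infty)=0$ for this $i>\dim(R)$ is exactly the hypothesis of the preceding theorem, which then gives that $R$ is regular.

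The step I expect to need the most care is the collapse of the spectral sequence. One must check that it produces an isomorphism of Ext groups in every degree, not merely an edge map, and that the identification of $L$ with a nonzero direct sum of copies of $k$ is compatible with the $R$-module structure used on the right-hand side. Both points hold, since the only nonzero row is $q=0$ and $R\subseteq R^\infty$, so the argument reduces cleanly to the preceding theorem.
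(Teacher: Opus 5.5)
Your proposal is correct and is essentially the paper's proof. The paper proves Corollary \ref{63} by combining Proposition \ref{47} (the same change-of-rings spectral sequence collapsing to $q=0$, with $L=R^\infty/\mathfrak m R^\infty$ and finiteness of $\operatorname{gldim}(R^\infty)$, giving $\operatorname{Ext}^{i}_R(R/\mathfrak m,R^\infty)=0$ for $i\gg0$) with the preceding theorem; your write-up is slightly more careful than Proposition \ref{47}, since you take the left-hand Ext over $R^\infty$, check $L\neq 0$, and make the step $\operatorname{Ext}^p_R(\bigoplus_X k,R^\infty)\cong\prod_X\operatorname{Ext}^p_R(k,R^\infty)$ explicit.
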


\begin{proof}
	Combine {Proposition} \ref{47} along with the previous theorem.
\end{proof}
Also, a similar argument  gives:

 \begin{corollary}(Aberbach-Li)
	Suppose 	\(
	\operatorname{Tor}^{R}_{i}(k,R^\infty)=0\)   $\exists i>0$.
	Then 	$R$ is regular.
\end{corollary}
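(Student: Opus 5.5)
We deduce the statement from the previous theorem by comparing $\operatorname{Tor}$ against $R^\infty$ with $\operatorname{Ext}$ into $R^\infty$, using Matlis duality and the self-similarity of $R^\infty$ under Frobenius. The plan is to show that the vanishing of a single $\operatorname{Tor}_i^R(k,R^\infty)$ forces the vanishing of $\operatorname{Ext}^j_R(k,R^\infty)$ for some $j>\dim R$, or more directly that $\operatorname{fd}_R(k)<\infty$.

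\textbf{Step 1: from one vanishing Tor to all higher Tors.} Since $k$ is finitely generated, take a minimal free resolution $F_\bullet\to k$ over $R$. Then $\operatorname{Tor}_i^R(k,R^\infty)=H_i(F_\bullet\otimes_R R^\infty)$. Write $R^\infty=\varinjlim {}^eR$. Tor commutes with direct limits, so
\[
\operatorname{Tor}_i^R(k,R^\infty)\cong\varinjlim_e \operatorname{Tor}_i^R(k,{}^eR).
\]
Each $\operatorname{Tor}_i^R(k,{}^eR)$ is the homology of $F^e(F_\bullet)$, the Frobenius pushforward complex. The standard argument of Kunz and Aberbach--Li is as follows. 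If the transition maps $F^e(F_\bullet)\to F^{e+1}(F_\bullet)$ (the entries of the matrices raised to the $p$-th power) send a cycle to a boundary, then the classes eventually die. Combined with the fact that the entries of a minimal resolution lie in $\mathfrak m$, this lets one show by a "rigidity" argument that vanishing at $i$ propagates to $i+1$. This is the analogue of the syzygy induction in the proof of the previous theorem.

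\textbf{Step 2: pass to Ext.} Using the second isomorphism in Fact \ref{fact:matlis-duality},
\[
\operatorname{Tor}_i^R(k,R^\infty)^\vee\cong \operatorname{Ext}^i_R(k,(R^\infty)^\vee).
\]
The vanishing therefore becomes a statement about Bass numbers of $(R^\infty)^\vee$. To land exactly in the hypothesis of the previous theorem, I would instead avoid duality and mimic its proof directly. Use the finite-length resolution of $R^\infty/\mathfrak m R^\infty$ by free $R^\infty$-modules, which exists by finite global dimension of $R^\infty$ \cite{BS1}. Tensor it with $k$ and dimension-shift along the syzygies $\Syz_j$. Tor commutes with arbitrary direct sums, so $\operatorname{Tor}^R_{i}(k,\oplus R^\infty)=0$ with no coherence issue. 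Climbing the short exact sequences $0\to\Syz_j\to F_{j-1}\to\Syz_{j-1}\to 0$ then yields $\operatorname{Tor}^R_{i'}(k,R^\infty/\mathfrak mR^\infty)=0$ for large $i'$. Since $R^\infty/\mathfrak m R^\infty\cong\oplus k$ as an $R$-module, this gives $\operatorname{Tor}_{i'}^R(k,k)=0$. Hence $\operatorname{pd}_R k<\infty$ and $R$ is regular by Auslander--Buchsbaum--Serre.

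\textbf{Main obstacle.} The dimension shifting in Step 2 needs vanishing of $\operatorname{Tor}_{n}^R(k,R^\infty)$ for a whole range of consecutive $n$ (at least $t+1$ of them, where $t=\operatorname{gldim}R^\infty$), not just for a single $i$. So Step 1, the rigidity $\operatorname{Tor}_i=0\Rightarrow\operatorname{Tor}_{n}=0$ for all $n\ge i$, is the crux. I would first try to prove it by exploiting that $R^\infty\cong{}^1(R^\infty)$ as rings via Frobenius. This gives $\operatorname{Tor}_n^R(k,R^\infty)\cong\operatorname{Tor}_n^{R}(F^*k\text{-twisted},R^\infty)$, and one can try to compare the Koszul homology $H_n(\mathfrak m;R^\infty)$ with $H_n(\mathfrak m^{[q]};R^\infty)$. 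Failing that, I would fall back on the Aberbach--Li rigidity input, accepting it as the nontrivial ingredient.
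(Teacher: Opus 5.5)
The gap is Step 1, and nothing in the proposal closes it.

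Your Step 2 is, in substance, what the paper means by `a similar argument'. You take a finite free $R^\infty$-resolution of $R^\infty/\mathfrak mR^\infty\cong\bigoplus k$, use that $\operatorname{Tor}^R_*(k,-)$ commutes with direct sums, climb the syzygies, and conclude $\operatorname{Tor}^R_n(k,k)=0$ for some $n$. You also correctly see that this needs $\operatorname{Tor}^R_n(k,R^\infty)=0$ for $t+1$ consecutive values of $n$. So everything rests on Step 1, which is not proved.

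Nothing in your sketch moves information between homological degrees:
\begin{itemize}
\item The remark about cycles becoming boundaries along $F^e(F_\bullet)\to F^{e+1}(F_\bullet)$ gives no map from degree $i$ to degree $i+1$.
\item Neither does the fact that the differentials have entries in $\mathfrak m$.
\item The isomorphism $R^\infty\cong{}^1(R^\infty)$ only yields degree-preserving identifications such as $H_n(\underline{x};R^\infty)\cong H_n(\underline{x}^{[q]};R^\infty)$.
\end{itemize}
Your fallback of `accepting the Aberbach--Li rigidity input' is circular, since the statement you are proving is the Aberbach--Li theorem.

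The paper does not derive the propagation from Frobenius. As in the proof of the preceding theorem, it imports it from the Christensen--Iyengar--Marley rigidity theorem, cited there as [IYENGAR, Theorem 1.2]. Its Tor form says: for an arbitrary $R$-module $M$, if $\operatorname{Tor}^R_n(k,M)=0$ in one degree $n\ge\dim R$, then $\operatorname{Tor}^R_j(k,M)=0$ for all $j\ge n$. Taking $M=R^\infty$ gives exactly the input your Step 2 needs, and the argument then closes.

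This only covers $i\ge\dim R$, however. Below $\dim R$ rigidity fails for general modules. For example, over $R=k[[x,y]]$ the module $M=E_R(k)$ has $\operatorname{Tor}^R_1(k,M)=0$ but $\operatorname{Tor}^R_2(k,M)\cong k$. So the range $0<i<\dim R$ needs an argument that genuinely uses the structure of $R^\infty$. Neither your proposal nor the paper's one-line `similar argument' supplies one.
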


The following is an easy approach to  Bhatt-Iyengar-Ma, see \cite{bim}. 
\begin{corollary}\label{63a}
	Suppose 	\(
	\operatorname{Tor}_{+}^{R}(R^\infty,R^\infty)=0\).
	Then 	$R$ is regular.
\end{corollary}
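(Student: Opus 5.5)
The plan is to copy the proof of Corollary \ref{63} with $\operatorname{Ext}$ replaced by $\operatorname{Tor}$. The route goes through the Tor-version of the Aberbach--Li corollary: it is enough to show $\operatorname{Tor}^R_i(k,R^\infty)=0$ for some (indeed all large) $i>0$.

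\textbf{Step 1: change of rings.} Take any $R^\infty$-module $L$. There is a base-change spectral sequence
\[
E^2_{p,q}=\operatorname{Tor}^{R^\infty}_p\bigl(\operatorname{Tor}^R_q(L',R^\infty),L\bigr)\Longrightarrow \operatorname{Tor}^R_{p+q}(L',L).
\]
The cleaner way to see it: for $L'$ an $R$-module, $L'\otimes^{\mathbf L}_R L\simeq (L'\otimes^{\mathbf L}_R R^\infty)\otimes^{\mathbf L}_{R^\infty}L$. The symmetric form I actually want is
\[
\operatorname{Tor}^{R^\infty}_p\bigl(L,\operatorname{Tor}^R_q(R^\infty,R^\infty)\bigr)\Rightarrow \operatorname{Tor}^R_{p+q}(L,R^\infty).
\]
This comes from $L\otimes^{\mathbf L}_R R^\infty\simeq L\otimes^{\mathbf L}_{R^\infty}(R^\infty\otimes^{\mathbf L}_R R^\infty)$. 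The hypothesis $\operatorname{Tor}^R_+(R^\infty,R^\infty)=0$ makes it collapse. Hence
\[
\operatorname{Tor}^R_p(L,R^\infty)\cong \operatorname{Tor}^{R^\infty}_p(L,R^\infty\otimes_R R^\infty)\quad\text{for all } p .
\]

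\textbf{Step 2: finite global dimension.} Put $L=R^\infty/\mathfrak m R^\infty$. By Bhatt--Scholze \cite{BS1}, $g=\operatorname{gldim}(R^\infty)<\infty$, so the right side of Step 1 vanishes for $p>g$. As an $R$-module, $L\cong\bigoplus k$, because $\mathfrak m L=0$ and $L$ is a $k$-vector space. Since Tor commutes with direct sums, $\operatorname{Tor}^R_p(k,R^\infty)$ is a direct summand of $\operatorname{Tor}^R_p(L,R^\infty)=0$. So $\operatorname{Tor}^R_p(k,R^\infty)=0$ for every $p>g$. The Aberbach--Li corollary stated just above then gives that $R$ is regular.

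\textbf{Fallback and main obstacle.} If one prefers not to quote Aberbach--Li, there is a second route. Take Matlis duals and use the second isomorphism in Fact \ref{fact:matlis-duality}: this gives $\operatorname{Ext}^p_R(k,(R^\infty)^\vee)=0$ for $p\gg0$. Then argue as in the proof of Theorem \ref{18} and Proposition \ref{47}. That route is less direct, because $(R^\infty)^\vee$ is not $R^\infty$.

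The main obstacle is Step 1: justifying the collapse and the identification in the derived category for the non-noetherian $R$-algebra $R^\infty$. Concretely, one needs the associativity $L\otimes^{\mathbf L}_R R^\infty\simeq L\otimes^{\mathbf L}_{R^\infty}(R^\infty\otimes^{\mathbf L}_R R^\infty)$ for an $R^\infty$-module $L$. I would verify it by resolving $R^\infty$ by a flat $R$-resolution $P_\bullet$. Then $L\otimes_R P_\bullet\cong L\otimes_{R^\infty}(R^\infty\otimes_R P_\bullet)$, and by hypothesis $R^\infty\otimes_R P_\bullet$ is quasi-isomorphic to the module $R^\infty\otimes_R R^\infty$. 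After that, the finite global dimension input and the summand argument are routine.
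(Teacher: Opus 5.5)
Your proof is correct, but it is not the route the paper takes.

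\textbf{The paper's route.} The paper proves Corollary~\ref{63a} by localization. Tor commutes with localization and $(R^\infty)_{\mathfrak p}\cong (R_{\mathfrak p})^\infty$, so the hypothesis passes to each $R_{\mathfrak p}$. Induction on $\dim R$ then reduces to the case where $R$ has an isolated singularity. At that point the paper invokes an argument similar to \cite[Theorem 3.15(4)]{Ryo}.

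\textbf{Your route.} You instead run the Tor analogue of Proposition~\ref{47}.
\begin{enumerate}
\item Your verification of Step~1 is sound: under the hypothesis, $R^\infty\otimes_R P_\bullet$ is a bounded-below complex of flat $R^\infty$-modules that is acyclic in positive degrees. It is therefore a flat resolution of $R^\infty\otimes_R R^\infty$. This gives $\operatorname{Tor}^R_p(L,R^\infty)\cong\operatorname{Tor}^{R^\infty}_p(L,R^\infty\otimes_R R^\infty)$.
\item Finiteness of the homological dimension of $R^\infty$, together with $R^\infty/\mathfrak m R^\infty\cong\bigoplus k$ (a nonempty sum), yields $\operatorname{Tor}^R_p(k,R^\infty)=0$ for all $p\gg0$.
\item Aberbach--Li then finishes the proof.
\end{enumerate}

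\textbf{Comparison.} Your route avoids the induction on dimension and the isolated-singularity input from \cite{Ryo}. It also fits uniformly with the paper's other change-of-rings arguments. For Tor you only need $\operatorname{fd}_{R^\infty}(R^\infty/\mathfrak m R^\infty)<\infty$, that is, finite weak (Tor) dimension. This is strictly less than the finite global dimension statement the paper quotes from \cite{BS1} for its Ext arguments, so you may want to phrase Step~2 that way.

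The price is that your proof rests on two external inputs: this finiteness property of $R^\infty$ and the Aberbach--Li theorem. The paper's proof instead relies on the isolated-singularity result of \cite{Ryo}. Your Matlis-duality fallback is not needed.
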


\begin{proof}
Against $\Ext$, tor modules commutes with localization. Then by an easy
induction on Krull dimension, we may assume in addition that $R$ is of isolated singularity, then an argument  similar to \cite[Theorem 3.15(4)]{Ryo} implies	$R$ is regular.
\end{proof}
It is worth noting that many of the results--Corollary \ref{63} among them--extend to 
$R^+$ as well. We do not pursue these extensions here, however, since our present focus is exclusively on 
$R^\infty$. Since our results are in prime characteristic, it is natural to ask (see \cite{MPa} and \cite{Ryo} for some progress):
\begin{question}Let $R$ be of characteristic zero.  When is $\operatorname{Gid}_R(R^+) < \infty$?
\end{question}
\begin{acknowledgement}
I thank everyone who helped me in carrying out this work. In particular, I thank Ryo Ishizuka for many valuable comments. I learned a lot about {Conjecture} \ref{61} with my coauthors Ishizuka \cite{Ryo}, Mahdavi \cite{Elham}, Patankar \cite{MPa} along with different projects. 
\end{acknowledgement}
\maketitle

\end{document}